\tikzstyle{vertex}=[circle,draw=black,fill=black,inner sep=0,minimum size=3pt,text=white,font=\footnotesize]
\newtheorem{lemma}{Lemma}[section]
\newtheorem{corollary}[lemma]{Corollary}
\newtheorem{theorem}[lemma]{Theorem}
\newtheorem{conjecture}[lemma]{Conjecture}
\theoremstyle{definition}
\newtheorem{defn}[lemma]{Definition}
\newtheorem{claim}[lemma]{Claim}
\global\long\def\eps{\varepsilon}
\newcommand{\claimproofstart}[1][Proof]{\begin{proof}[#1]
\renewcommand{\qedsymbol}{$\boxdot$}}
\newcommand\claimproofend{
\end{proof}
\renewcommand{\qedsymbol}{$\square$}}
\definecolor{darkgreen}{rgb}{0.1,0.7,0.1}
\newcommand\subsetsim{\mathrel{%
  \ooalign{\raise0.2ex\hbox{$\subset$}\cr\hidewidth\raise-0.8ex\hbox{\scalebox{0.9}{$\sim$}}\hidewidth\cr}}}
\newcommand\midusedassuchthat{:}
\newcommand{\numberofabsorbable}{m}
  \newcommand{\labelinthm}[1]{%
     \label{temp#1}
     \protected@write \@auxout {}{\string \newlabel{#1}{{\emph{\ref{temp#1}}}{\thepage}{\emph{\ref{temp#1}}}{temp#1}{}} }%
     %\hypertarget{temp#1}{\emph{\ref{temp#1}}}
  }
\newcommand\su{\subseteq}
\newcommand\rk{\operatorname{rk}}
\newcommand\spn{\operatorname{span}}
\title{Asymptotically-tight packing and covering with transversal bases in Rota's basis conjecture}
\author{Richard Montgomery\thanks{Mathematics Institute, University of Warwick, Coventry, United Kingdom. Email: {\tt
richard.montgomery@warwick.ac.uk}. Supported by the European Research Council (ERC) under the European Union Horizon 2020 research and innovation programme (grant agreement No.\ 947978).}
\and Lisa Sauermann\thanks{Institute for Applied Mathematics, University of Bonn, Germany. Email: {\tt sauermann@iam.uni-bonn.de}. Supported by the
DFG Heisenberg Program.}}
\begin{document}

\maketitle

\begin{abstract}
In 1989, Rota conjectured that, given any $n$ bases $B_1,\dots,B_n$ of a vector space of dimension $n$, or more generally a matroid of rank $n$, it is possible to rearrange these into $n$ disjoint transversal bases. Here, a transversal basis is a basis consisting of exactly one element from each of the original bases $B_1,\dots,B_n$. Two natural approaches to this conjecture are, to ask in this setting a) how many disjoint transversal bases can we find and b) how few transversal bases do we need to cover all the elements of $B_1,\dots,B_n$? In this paper, we give asymptotically-tight answers to both of these questions.

For a), we show that there are always $(1-o(1))n$ disjoint transversal bases, improving a result of Buci{\'c}, Kwan, Pokrovskiy, and Sudakov that $(1/2-o(1))n$ disjoint transversal bases always exist. 
For b), we show that $B_1\cup\dots \cup B_n$ can be covered by $(1+o(1))n$ transversal bases, improving a result of Aharoni and Berger using instead $2n$ transversal bases, and a subsequent result of the Polymath project on Rota's basis conjecture using $2n-2$ transversal bases.
\end{abstract}

\section{Introduction}

Rota's basis conjecture is a famous conjecture concerning the rearrangeability of bases in a vector space or matroid. Rota  posed it in 1989 (see~\cite[Conjecture~4]{huang1994relations}), both in the special case of vector spaces, and for general matroids (see Section~\ref{sec:prelim} for the definition and the basic properties of a matroid). Given any $n$ bases $B_1,\dots,B_n$ of an $n$-dimensional vector space (or, more generally, in a matroid of rank $n$), the conjecture states that the elements in each of the bases $B_1,\dots,B_n$ can be ordered in such a way that the first elements of $B_1,\dots,B_n$ together form a basis, the second elements of $B_1,\dots,B_n$ together form a basis, and so on. In other words, the (multi-)set, $B_1\cup \dots\cup B_n$ can be decomposed into $n$ disjoint bases each consisting of exactly one element from each of the original bases $B_1,\dots,B_n$. Saying that such a basis containing exactly one element from each of $B_1,\dots,B_n$ is a \emph{transversal basis}, Rota's basis conjecture can be restated as follows.

\begin{conjecture}[Rota's Basis Conjecture]\label{conj:matroidrota}
Any collection of $n$ disjoint bases in a rank-$n$ matroid can be decomposed into $n$ transversal bases.
\end{conjecture}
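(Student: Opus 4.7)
The plan is to combine the matroid-union machinery with an absorption-based completion; given that Rota's Basis Conjecture has resisted direct attack for over thirty years, any concrete plan here is tentative, but the following is the most natural route I would try.

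As a first step, I would invoke the Edmonds--Nash-Williams matroid union theorem, which easily produces $n$ pairwise disjoint bases inside $B_1 \sqcup \dots \sqcup B_n$: since each $B_i$ is independent of rank $n$, the required rank inequality is immediate. The real difficulty is forcing these $n$ bases to be \emph{transversal}, i.e., simultaneously independent in the partition matroid $P$ whose parts are the $B_i$'s. In matroid-intersection language, we must decompose the ground set into $n$ common bases of the original matroid and $P$, whereas Edmonds' matroid intersection theorem guarantees only a single common basis.

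The second step would be an absorption scheme. I would reserve, up front, a small random family of partial transversal bases carrying a rich set of possible completions, then construct almost all of the remaining transversal bases probabilistically, and finally use the reservoir to incorporate the $o(n)$ leftover elements into complete transversal bases. The swapping step would be organised through the Aharoni--Berger exchange graph on pairs of transversal bases, with absorption executed via augmenting paths: if the reservoir has been designed so that every possible leftover configuration can be corrected by a short augmenting path into some absorbing structure, the decomposition can be completed exactly.

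The hard part is closing the gap from $(1-o(1))n$ to exactly $n$ transversal bases. Known absorption techniques leave an error that is $o(n)$ but not identically zero, and no deterministic structural argument is known that removes this last piece for arbitrary matroids; the reservoir would need to be \emph{rigid}, in the sense that every possible $o(n)$-sized remainder admits a valid absorption, and producing such a reservoir seems to require genuinely new ideas in matroid theory. For the vector-space case in characteristic zero one could alternatively try to invoke the Alon--Tarsi conjecture on the signed count of even and odd Latin squares, which is known to imply Rota's conjecture in even dimensions over such fields; but Alon--Tarsi is itself open, and this route would in any case not settle the general matroid statement.
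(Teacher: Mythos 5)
The statement you are asked about is a conjecture: the paper does not prove it, and it remains open in general. Your text is accordingly not a proof but a research programme, and you concede this yourself. The concrete gap is the entire absorption step: you never say how the ``rigid'' reservoir of partial transversal bases would be constructed, what absorbing property it would have, or why every possible leftover configuration could be corrected by short augmenting paths in an exchange structure on transversal bases. Without that, nothing beyond the already-known asymptotic statements is established, and as you note, producing such a reservoir is exactly the missing ingredient that nobody knows how to supply. The first step is also weaker than it sounds: matroid union does give $n$ disjoint bases inside $B_1\cup\dots\cup B_n$ (indeed the $B_i$ themselves are such a family), but these bases carry no transversality information, and there is no known mechanism for converting an arbitrary decomposition into disjoint bases into a decomposition into common bases of the matroid and the partition matroid --- matroid intersection gives one common basis, not a partition into $n$ of them.

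For context on what is actually known: the paper in question proves only the asymptotic versions --- that one can always pack $(1-o(1))n$ disjoint transversal bases and cover $B_1\cup\dots\cup B_n$ with $(1+o(1))n$ transversal bases --- via families of rainbow independent sets, deadlock-based density control, and cascade-style switching arguments, not via absorption. The authors themselves remark that combining such asymptotic results with an absorption scheme is a plausible route to the full conjecture but that no such scheme is currently available. So your outline is a reasonable description of a hoped-for strategy, but the step you label as ``the hard part'' is not a technical detail to be filled in later; it is the open problem itself, and your proposal does not prove the conjecture.
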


Aside from its intriguingly simple statement, an appealing aspect of this conjecture is that it has many unexpected connections. In particular, Huang and Rota~\cite{huang1994relations} discovered a surprising link to the Alon--Tarsi conjecture~\cite{alon1992colorings} on the number of odd and even Latin squares of order $n$ (as discussed further below). Additionally, Conjecture~\ref{conj:matroidrota} for graphic matroids can be phrased as a question about rainbow spanning trees in a multi-graph (namely, given a multi-graph on $n+1$ vertices with $n$ spanning trees in $n$ different colours, does there always exist a decomposition of the coloured edge set into rainbow spanning trees?). Here, a \emph{rainbow} subgraph is one with at most one edge of each colour. Studying the existence of rainbow subgraphs in graphs with various properties, and applications thereof, is more generally a very active research direction, as can be seen in the recent surveys (on related but different topics) by Pokrovskiy~\cite{Alexeysurvey}, Sudakov~\cite{sudakov2024restricted}, and the current first author~\cite{transversalsurvey}.

Despite a great deal of attention, including as the subject of the 12th Polymath project~\cite{chowpolymathblog} in 2017, Conjecture~\ref{conj:matroidrota} remains widely open, even for the case of vector spaces (i.e., representable matroids). The conjecture has only been confirmed in some very special cases, including for `paving matroids' by Geelen and Humphries~\cite{geelen2006rota}, for `strongly base orderable matroids' by Wild~\cite{wild1994rota}, and for matroids of rank at most $4$ by Cheung~\cite{cheung-rank-4}. The link to the Alon--Tarsi conjecture, discovered by Huang and Rota~\cite{huang1994relations} (and later simplified by Onn~\cite{onn1997colorful}), reduces Rota's basis conjecture for real-representable matroids of rank $n$, for even $n$, to the Alon--Tarsi conjecture for Latin squares of order $n$. The Alon--Tarsi conjecture was proved when $n-1$ is prime by Drisko~\cite{drisko1997number} and when $n+1$ is prime by Glynn~\cite{glynn2010conjectures}, thus together giving a proof of Conjecture~\ref{conj:matroidrota} for real-representable matroids of rank $n$ when $n-1$ or $n+1$ is an odd prime.

In addition to these special cases, there is some further evidence towards Conjecture~\ref{conj:matroidrota}. In 2006, Aharoni and Berger~\cite{aharoni2006intersection} proved that the natural fractional relaxation of Conjecture~\ref{conj:matroidrota} holds. More recently, the current second author~\cite{sauermann2024rota} showed that the conjecture holds with high probability for representable matroids over finite fields if the $n$ bases are chosen independently and uniformly at random.

There are two natural directions for working towards Conjecture~\ref{conj:matroidrota} in full, as was highlighted by Pokrovskiy~\cite{pokrovskiy2020rota}, and which are the focus of this paper. 
The first of these is the \emph{packing problem} in this setting: given $n$ bases in a rank-$n$ matroid, how many disjoint transversal bases can we find? The second is the \emph{covering problem} in this setting: given $n$ bases in a rank-$n$ matroid, how many transversal bases do we need to cover all of the elements of these bases? As discussed in Section~\ref{sec:results} below, in this paper we give asymptotically-tight answers to both of these questions, thus solving Problems~4.2 and~4.3 in~\cite{pokrovskiy2020rota}.

That one transversal basis can be found in the setting of Conjecture~\ref{conj:matroidrota} is an easy consequence of the matroid augmentation property, which can be used to greedily select one element from each basis step by step, while maintaining that the selected elements form an independent set. However, already, the existence of two disjoint transversal bases is not immediate. In 2007, Geelen and Webb~\cite{geelen2007rota} gave the first packing result towards Conjecture~\ref{conj:matroidrota}, by showing that there are always $\Omega(\sqrt{n})$ disjoint transversal bases in this setting. Geelen and Dong~\cite{dong2019improved} later introduced a beautiful probabilistic argument to show that there are always $\Omega(n/\log n)$ disjoint transversal bases. Finally, in 2020, 
Buci{\'c}, Kwan, Pokrovskiy, and Sudakov~\cite{bucic2020halfway} made an important breakthrough by showing there is always a linear number of disjoint transversal bases, as follows.

\begin{theorem}[\cite{bucic2020halfway}]\label{thm:halfway}
For any $\eps>0$, the following holds whenever $n$ is sufficiently large with respect to $\eps$. Any collection of $n$ bases of a rank-$n$ matroid has at least $(1/2-\eps)n$ disjoint transversal bases.
\end{theorem}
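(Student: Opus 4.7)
The plan is to combine the Aharoni--Berger fractional decomposition of $B_1\cup\dots\cup B_n$ into transversal bases with a two-stage construction: first produce many pairwise element-disjoint transversal \emph{independent sets} of near-full rank, then upgrade most of them into actual bases by a matroid-augmentation argument.

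For the first stage, I would start from the fractional solution of Aharoni--Berger, which provides weights $w_T\ge 0$ on transversal bases with $\sum_T w_T = n$ and $\sum_{T\ni e} w_T = 1$ for every element $e$. Sampling transversal bases according to these weights (with suitable negative correlation so that each element is chosen at most once across samples) should yield, with high probability, a family of $(1-o(1))n$ pairwise element-disjoint transversal sets $S_1,\dots,S_m$ in which each $S_j$ has matroid rank $n-o(n)$. Making this sampling scheme rigorous would use the fractional solution together with standard concentration arguments and matroid-union estimates controlling the expected rank of a randomly chosen transversal multiset.

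For the second stage, each $S_j$ must be repaired into a genuine transversal basis. Since $\rk(S_j)$ is already within $o(n)$ of $n$, only a few elements need to be replaced. By the matroid exchange axiom, for each deficient $S_j$ one can in principle swap out an element of $S_j$ and swap in a different element of the same $B_i$ (thus preserving the transversal property); the difficulty is that the desired replacement is generally already used in some other $S_{j'}$. One therefore has to organise these exchanges into augmenting walks through the family $(S_j)_{j\le m}$, in the spirit of rainbow-matching arguments.

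The main obstacle will be controlling these augmenting walks so that many of the $S_j$ are repaired simultaneously. Typically each walk fixes one $S_j$ only at the cost of disrupting another $S_{j'}$, and I expect the factor $1/2$ in the conclusion of Theorem~\ref{thm:halfway} to emerge precisely from a parity/accounting argument of this kind: an auxiliary ``exchange digraph'' on $\{S_1,\dots,S_m\}$ can be analysed to show that at least half of the near-transversals can be kept intact while the remainder supply the elements needed to complete the rest into bases, yielding $(1/2-\eps)n$ disjoint transversal bases.
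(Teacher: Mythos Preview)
This theorem is not proved in the present paper at all: it is quoted from \cite{bucic2020halfway} as prior work that the paper then strengthens to Theorem~\ref{thm:main-packing}. So there is no ``paper's own proof'' to compare against here, and your proposal is really an attempt to reconstruct the argument of Buci{\'c}, Kwan, Pokrovskiy, and Sudakov.

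That said, your outline has genuine gaps and does not resemble the actual proof in \cite{bucic2020halfway}. The first stage is the weakest: you propose to sample transversal bases from the Aharoni--Berger fractional solution ``with suitable negative correlation so that each element is chosen at most once across samples'', but no such sampling scheme is known, and producing one is essentially as hard as the problem itself. A fractional decomposition gives no control over how individual bases overlap, and there is no off-the-shelf concentration or dependent-rounding tool that converts a fractional cover by transversal bases into a large collection of element-disjoint near-bases in a general matroid. The phrase ``matroid-union estimates controlling the expected rank of a randomly chosen transversal multiset'' does not point to an actual lemma.

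The second stage is also where the real difficulty lies, and you acknowledge this but do not resolve it. Upgrading near-bases to bases via augmenting walks is exactly the hard step; the proof in \cite{bucic2020halfway} (and, in a more refined form, the proof of Lemma~\ref{keylem:completingbases-new} in this paper) uses a carefully designed cascade of exchanges whose analysis is delicate and is not captured by a parity argument on an exchange digraph. In particular, the factor $1/2$ in \cite{bucic2020halfway} does not arise from ``half the near-transversals supplying elements to the other half'' but from quantitative bounds in their cascade argument.
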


Theorem~\ref{thm:halfway} shows that, given bases $B_1,\dots,B_n$ in a rank-$n$ matroid, there is a family of disjoint transversal bases using almost one half of the elements in $B_1\cup\dots\cup B_n$. Pokrovskiy~\cite{pokrovskiy2020rota} subsequently showed that if instead of disjoint transversal bases one forms a family of $n$ disjoint rainbow independent sets, then almost all of the elements can be covered, as follows. Here, a \emph{rainbow independent set} is an independent set containing at most one element from each of the given bases.

\begin{theorem}[\cite{pokrovskiy2020rota}]\label{thm:pokrovskiy}
For any $\eps>0$, the following holds whenever $n$ is sufficiently large with respect to $\eps$. Any collection of $n$ bases of a rank-$n$ matroid has at least $(1-\eps)n$ disjoint rainbow independent sets of size at least $(1-\eps)n$ each.
\end{theorem}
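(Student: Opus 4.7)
My approach is to apply Theorem~\ref{thm:halfway} iteratively, then rebalance. Fix an auxiliary parameter $\eta \ll \eps$.

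\emph{Initial application.} Apply Theorem~\ref{thm:halfway} (with parameter $\eta$) to $B_1,\dots,B_n$, obtaining $(1/2-\eta)n$ disjoint transversal bases $T_1,\dots,T_{k_1}$, each of size $n$. Each $T_j$ is in particular a rainbow independent set of size $n \geq (1-\eps)n$, so these go directly into our target family. Let $B_i^{(1)} := B_i \setminus \bigcup_j T_j$; these are independent sets of size $(1/2+\eta)n$.

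\emph{Iteration for more rainbow sets.} To produce further rainbow independent sets, extend each leftover $B_i^{(1)}$ to a full basis $\tilde B_i$ of the auxiliary matroid $M \oplus F$, where $F$ is a free matroid on a set of fresh elements (so $\tilde B_i = B_i^{(1)} \cup F_i$ for a disjoint $F_i\subseteq F$ of size $(1/2-\eta)n$). Apply Theorem~\ref{thm:halfway} to $\tilde B_1,\dots,\tilde B_n$ in $M\oplus F$ to get another $(1/2-\eta)n$ disjoint transversal bases, and intersect each with $B_1\cup\dots\cup B_n$ to extract rainbow independent sets in the original coloring. Iterate this for $T = O(\log(1/\eps))$ rounds: the leftover size per basis shrinks roughly as $(1/2+\eta)^t n$, and after $T$ rounds the collected rainbow independent sets cover at least $(1-\eps)n^2$ of the $n^2$ elements in total.

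\emph{Rebalancing sizes.} The rainbow sets produced have unequal sizes: round-$t$ sets have average size $\approx (1/2+\eta)^{t-1}n$. To end up with $(1-\eps)n$ rainbow independent sets each of size $\geq (1-\eps)n$, I would retain the $(1-\eps)n$ largest rainbow sets and pad any underweight ones using swap operations: if a retained set $R$ has $|R|<(1-\eps)n$, choose a color $i$ missing from $R$, locate an element $x\in B_i$ lying in some discarded (small) rainbow set such that $R\cup\{x\}$ remains independent in $M$, and move $x$ into $R$. Iterate until all retained sets reach the required size.

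\emph{Main obstacle.} The crux of the argument is the rebalancing step: one must show that enough valid swaps are always available, simultaneously, to bring every retained set above the threshold $(1-\eps)n$. This should follow from the matroid exchange axiom together with a bipartite-matching analysis on the color/set incidence structure, but the details are delicate (one must avoid ``bottleneck'' colors whose leftover elements all land in the span of some retained set). A plausibly cleaner alternative is a \emph{rainbow nibble}: construct $(1-\eps)n$ rainbow sets in parallel, adding one element to each set per iteration by invoking Theorem~\ref{thm:halfway} on the remaining structure to find the required simultaneous extensions, bypassing explicit rebalancing at the cost of more intricate concentration arguments.
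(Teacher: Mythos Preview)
Your proposal has a genuine gap at exactly the point you flag as the ``main obstacle,'' and this gap is not a matter of missing details but of the approach itself.

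First, a technical issue with the iteration: the construction $M\oplus F$ does not have rank $n$, so the $\tilde B_i$ are not bases of it and Theorem~\ref{thm:halfway} does not apply as stated. This could perhaps be patched (e.g.\ by truncation, or by proving a version of Theorem~\ref{thm:halfway} for independent sets rather than bases), so I will not dwell on it.

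The real problem is the rebalancing. After the first application of Theorem~\ref{thm:halfway} you have $(1/2-\eta)n$ rainbow sets of size $n$; every subsequent round produces rainbow sets of size at most $(1/2+\eta)n$, which is far below the target $(1-\eps)n$. So to reach $(1-\eps)n$ sets of size $\geq(1-\eps)n$, you must pad roughly $n/2$ sets by roughly $n/2$ elements each---on the order of $n^2/4$ swaps. Your swap rule says: pick a missing colour $i$, and find $x\in B_i$ in a discarded set with $R\cup\{x\}$ independent. The matroid exchange axiom guarantees \emph{some} $x\in B_i$ with $R\cup\{x\}$ independent, but gives no control over where that $x$ currently lives; it may well sit in another retained set rather than a discarded one. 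Showing that enough swaps are simultaneously available, avoiding all such conflicts, is not a detail---it is the entire content of the theorem. Indeed, the paper remarks immediately after stating Theorem~\ref{thm:pokrovskiy} that ``there does not seem to be a simple argument in order to increase the size of the rainbow independent sets,'' which is precisely what your rebalancing step would need.

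The paper's route is completely different and sidesteps rebalancing altogether. It proves Lemma~\ref{lem:inclusion-maximal-is-large}: any inclusion-maximal family $\mathcal{T}$ of $\lfloor(1+\lambda)n\rfloor$ disjoint rainbow independent sets satisfies $|E(\mathcal{T})|\ge(1-\nu)n^2$. With $\lambda=\nu=\eps^2/2$, a two-line averaging argument then shows that at least $(1-\eps)n$ of the sets in $\mathcal{T}$ must already have size $\ge(1-\eps)n$. The work is entirely inside Lemma~\ref{lem:inclusion-maximal-is-large}, which is proved via Pokrovskiy's $\ell$-reduction machinery (Definition~\ref{def:reduction}, Lemma~\ref{lem:switching-reduction}) together with a weighted counting argument on the colour-availability graph (Lemma~\ref{lem:threeoutcomes}); Theorem~\ref{thm:halfway} is not invoked at all.
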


We remark that, with a simple greedy argument, one can deduce from Theorem~\ref{thm:pokrovskiy} that in the same setting one can also find $n$ disjoint rainbow independent sets, each of size at least $(1-\eps)n$ (this was observed by Kwan, see~\cite[Section~4]{pokrovskiy2020rota}). However, there does not seem to be a simple argument in order to increase the size of the rainbow independent sets in Theorem~\ref{thm:pokrovskiy}.

In the second direction, concerning covering results towards Conjecture~\ref{conj:matroidrota}, neither Theorem~\ref{thm:halfway} nor Theorem~\ref{thm:pokrovskiy} imply a strong covering result. Indeed, even if we can find $(1-o(1))n$ disjoint transversal bases in some matroid, it may still be the case that an additional $n$ transversal bases are needed in order to cover the remaining $o(n^2)$ elements (it is easy to construct examples for this).

Using topological tools, Aharoni and Berger~\cite[Assertion 8.11]{aharoni2006intersection} showed in 2006 that $2n$ transversal bases are sufficient to cover all of the elements in the bases in the setting of Conjecture~\ref{conj:matroidrota}, as follows (in fact, they proved a more general result \cite[Theorem 8.9]{aharoni2006intersection}, see also the discussion just before Theorem~\ref{thm:aharoni-berger}).

\begin{theorem}[\cite{aharoni2006intersection}]\label{thm:covering-2n}
Any collection of $n$ bases of a rank-$n$ matroid can be covered by $2n$ transversal bases.
\end{theorem}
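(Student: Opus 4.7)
The plan is to lift the problem to a matroid intersection covering question and invoke the Aharoni-Berger topological machinery. First I would pass to the disjoint union $E := B_1 \sqcup \cdots \sqcup B_n$ (of size $n^2$) and equip it with two matroid structures: the partition matroid $P$ whose independent sets consist of at most one element from each block $B_i$ (so $P$ has rank $n$), and the pull-back $M'$ of the ambient matroid along the natural map $E \to B_1 \cup \cdots \cup B_n$ (also of rank $n$). Under this lift, a transversal basis is precisely a common basis of $M'$ and $P$, and a transversal independent set is precisely a common independent set.

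I would then observe that $E$ decomposes into $n$ bases of $M'$ (the blocks $B_i$ themselves) and, after fixing bijections $B_i \leftrightarrow [n]$, also into $n$ bases of $P$ (group by coordinate). Consequently, the covering numbers $\tau(M')$ and $\tau(P)$ are both at most $n$. At this point I would invoke the topological matroid intersection theorem of Aharoni and Berger~\cite[Theorem 8.9]{aharoni2006intersection}, which says that if $M_1, M_2$ are matroids on a common ground set with $\tau(M_1), \tau(M_2) \leq k$, then the ground set can be covered by $2k$ common independent sets. Applied to $M'$ and $P$ with $k = n$, this yields a cover of $E$ by $2n$ transversal independent sets.

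The final step, upgrading a covering by transversal independent sets to a covering by transversal bases, is elementary. Given a transversal independent set $S$ and a block $B_i$ disjoint from $S$, matroid augmentation from the basis $B_i$ in $M'$ supplies some $e \in B_i$ such that $S \cup \{e\}$ is independent in $M'$, while independence in $P$ is automatic since the $i$-th block was empty. Iterating produces a transversal basis containing $S$, and replacing each of the $2n$ transversal independent sets in the cover by such an extension preserves the cover. The entire difficulty of the argument is packaged inside the Aharoni-Berger topological covering theorem, whose proof proceeds through a computation of the topological connectivity of the matroid intersection complex; everything else in the argument is elementary bookkeeping. It is precisely this topological input that forces the factor $2$ in the $2n$ bound, and closing the gap to the $(1+o(1))n$ bound is the substance of the main contribution of the present paper.
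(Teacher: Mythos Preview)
Your proposal is correct and matches the paper's treatment essentially verbatim. The paper does not give its own proof of this theorem but simply cites it as \cite[Assertion~8.11]{aharoni2006intersection}, noting it follows from the more general \cite[Theorem~8.9]{aharoni2006intersection}; later, in Section~\ref{sec:proof-main-covering}, the paper restates that result in coloured-matroid language as Theorem~\ref{thm:aharoni-berger} and remarks explicitly that it ``directly implies'' Theorem~\ref{thm:covering-2n}, which is precisely your deduction (decompose $B_1\cup\dots\cup B_n$ into $n$ independent sets via the $B_i$, observe each colour appears $n$ times, apply the Aharoni--Berger bound, then greedily extend to rainbow bases).
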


The Polymath 12 project~\cite{chowpolymathblog} later improved this bound slightly, showing that $2n-2$ transversal bases are already sufficient.

\subsection{Our results}\label{sec:results}

In this paper, we give asymptotically-tight bounds for both the packing and the covering problem in the setting of Rota's basis conjecture, as follows, thus improving Theorems~\ref{thm:halfway} to \ref{thm:covering-2n} above.

\begin{theorem}[Asymptotic Packing Theorem]\label{thm:main-packing}
For any $\eps>0$, the following holds whenever $n$ is sufficiently large with respect to $\eps$. Any collection of $n$ bases of a rank-$n$ matroid has at least $(1-\eps)n$ disjoint transversal bases.
\end{theorem}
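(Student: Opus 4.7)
The plan is to upgrade Pokrovskiy's Theorem~\ref{thm:pokrovskiy} from rainbow \emph{independent sets} of size $(1-\varepsilon)n$ to full transversal \emph{bases}, via an absorption argument. Theorem~\ref{thm:pokrovskiy} already produces $(1-\eta)n$ disjoint rainbow independent sets $I_1,\dots,I_m$ of size at least $(1-\eta)n$, so the remaining task is, for each $I_j$, to fill its (at most $\eta n$) missing basis-indices $M_j \su [n]$ with elements from the corresponding bases, preserving matroid independence and using each element at most once across all $I_j$.

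Fix parameters $\eta \ll \delta \ll \varepsilon$. First, I would set aside a random ``absorbing reserve'' $R_i \su B_i$ of size $\delta n$ in each basis, writing $B_i' = B_i \setminus R_i$. Truncating the matroid to rank $(1-\delta)n$ makes each $B_i'$ a basis of the truncation, so Theorem~\ref{thm:pokrovskiy} applies to $(B_i')_{i\in [n]}$, yielding sets $I_1,\dots,I_m$ with $m \geq (1-\eta)n$ that are independent in the original matroid and avoid the reserves $R_i$. Then I would run the absorption step: for each $j$ and each $i \in M_j$, choose $x_{j,i} \in R_i$ such that $I_j \cup \{x_{j,i} \midusedassuchthat i \in M_j\}$ is independent in the matroid, with all chosen $x_{j,i}$ distinct within each basis.

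The main obstacle is this absorption step: it is essentially a matroid-matching problem in which the independence constraint compounds as each $I_j$ grows. The enabling observation is that, for any independent set $I$ of size $s < n$, the set of elements $b \in B_i$ that fail to augment $I$ (namely $B_i \cap \spn(I)$) has size at most $s$, so at least $n - s$ elements of $B_i$ are valid extenders. By concentration, a random reserve $R_i$ of size $\delta n$ therefore contains a $(1-o(1))$-fraction of such extenders for every $I_j$ simultaneously. To assign the $x_{j,i}$ disjointly, I expect a Hall-type or iterative absorption scheme, potentially with a second-stage reservoir that handles the final few completions, and possibly relying on a mild balance property in (the proof of) Theorem~\ref{thm:pokrovskiy} that prevents any particular basis from appearing in too many $M_j$ at once. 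Any $I_j$ that fails to complete is discarded; since we start with $(1-\eta)n$ partial bases and need only $(1-\varepsilon)n$ transversal bases, the slack comfortably absorbs such losses, and the proof reduces to executing the matroid-matching step robustly using the random reserves $R_i$.
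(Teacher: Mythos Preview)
Your high-level plan --- set aside a random reserve $R$, build a large family of disjoint rainbow independent sets in the complement, and then use $R$ to upgrade them to full transversal bases --- matches the architecture of the paper's proof (Lemmas~\ref{keylem:almostalmost-new} and~\ref{keylem:completingbases-new}). Where the proposal falls short is the absorption step, which you treat as a secondary detail but which is in fact where almost all of the difficulty lives.

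The concrete obstruction is the shrinking-choice problem: once $I_j$ has been grown to size $n-k$, the set $B_i \setminus \spn(I_j)$ of valid extenders in each remaining $B_i$ can have size as small as $k$, so the expected number of such extenders in a $\delta$-density reserve $R_i$ is only $\delta k$, and for $k = O(1)$ there is no concentration to appeal to. Your union-bound sketch (``a $(1-o(1))$-fraction of extenders for every $I_j$ simultaneously'') cannot be made to work once $n - |I_j|$ is small, since the $I_j$ depend on $R$ and one would have to union-bound over all $n^{\Theta(n)}$ rainbow independent sets. Discarding failures is not a fix either: nothing in your outline rules out a constant failure probability per $I_j$, which the $\eta \ll \eps$ slack cannot absorb; and a Hall-type matching hits the same wall, because a bipartite Hall condition does not see the matroid independence constraint that governs the last extensions. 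The paper's solution is genuinely different from per-set completion: Lemma~\ref{keylem:completingbases-new} increments $|E(\mathcal{T})|$ one at a time through a \emph{cascade} $T_1, T_2, \ldots, T_r \in \mathcal{T}$, where to grow $T_1$ one steals an element from $T_2$, and to repair $T_2-e$ one performs two swaps with the uncovered set $U$ (one of them using an element of $R\cap U$, which is where the reserve is actually spent) before stealing from $T_3$, and so on. The number of absorbable elements grows by a factor $1+\Theta(\eps)$ at each cascade step, so after $O(\log(n^2/s))$ steps (with $s$ the current deficit $\lceil(1-\eps)n\rceil\cdot n - |E(\mathcal{T})|$) an uncovered element is necessarily found; the logarithmic cost per increment in \eqref{eq:betterthefurtherfromabasis} is exactly what keeps total reserve usage below $\sigma n^2$ across all increments. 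This rerouting through other members of the family is what dissolves the last-element problem, and it has no analogue in your independent-completion scheme.

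(A minor side issue: truncating to rank $(1-\delta)n$ leaves $n$ bases in a rank-$(1-\delta)n$ matroid, which is not the hypothesis of Theorem~\ref{thm:pokrovskiy}; the paper proves the variant it needs, Lemma~\ref{keylem:almostalmost-new}, directly rather than via a black-box reduction.)
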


\begin{theorem}[Asymptotic Covering Theorem]\label{thm:main-covering} 
For any $\eps>0$, the following holds whenever $n$ is sufficiently large with respect to $\eps$. Any collection of $n$ bases of a rank-$n$ matroid can be covered by at most $(1+\eps)n$ transversal bases.
\end{theorem}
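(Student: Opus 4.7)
The approach is to deduce Theorem~\ref{thm:main-covering} from Theorem~\ref{thm:main-packing} by first covering almost all elements via a large packing, then mopping up the small uncovered leftover.

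Fix $\delta = \delta(\eps) > 0$ with $\delta \ll \eps$ and apply Theorem~\ref{thm:main-packing} with parameter $\delta$ to $B_1, \ldots, B_n$, obtaining $(1-\delta)n$ pairwise disjoint transversal bases. Let $U_i \subseteq B_i$ be the uncovered part of $B_i$; each $|U_i| = \delta n$ and $U_i$ is an independent set of the matroid (being a subset of the basis $B_i$). The initial packing uses $(1-\delta)n$ transversal bases from the budget, leaving $K := \lceil (\eps+\delta)n \rceil$ further transversal bases to cover $U := \bigcup_i U_i$. Any cover of $U$ requires at least $\delta n$ additional transversal bases, so with budget $K \approx \eps n$ we have an $\Omega(\eps/\delta)$-fold slack.

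The residual task is: given independent sets $U_i \subseteq B_i$ of size $\delta n$, cover $U$ by $K$ transversal bases of the matroid (each with one element from every $B_i$). My plan is to choose, for each $i$, an injection $\phi_i\colon U_i \to [K]$ assigning to each $u \in U_i$ a target slot $\phi_i(u)$ in which $u$ will appear as the $i$-th element. With the $\phi_i$ fixed, the $k$-th transversal basis $T^{(k)}$ should contain (i) the \emph{forced} element $\phi_i^{-1}(k) \in B_i$ whenever $k \in \phi_i(U_i)$, and (ii) one ``free'' element from each other $B_i$. Provided the forced rainbow set $F_k := \{\phi_i^{-1}(k) : i \in [n] \text{ with } k \in \phi_i(U_i)\}$ is independent in the matroid, the free elements can be chosen by iterated matroid augmentation so that $T^{(k)}$ is a transversal basis (at each augmentation step the current independent set has size $< n$ and $B_i$ is a basis, hence not contained in the span of the current set). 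The transversal bases $T^{(1)}, \ldots, T^{(K)}$ then cover $U$ by design.

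The key obstacle is arranging for every forced rainbow set $F_k$ to be independent. Under uniformly random injections $\phi_i$, each $|F_k|$ concentrates near $\sum_i |U_i|/K \approx (\delta/\eps) n$, comfortably below the matroid rank $n$, so the $F_k$'s are ``small'' and independence of a single $F_k$ is plausible; but coordinating this simultaneously across all $K$ slots is delicate and cannot be obtained from Theorem~\ref{thm:main-packing} as a black box. I would attempt to overcome this via either a careful probabilistic argument (choosing random $\phi_i$'s and performing local repairs whenever an $F_k$ contains a circuit) or, more robustly, an \emph{absorption} argument in which we reserve a small flexible collection of transversal bases before the initial packing, designed so that any $O(\delta n^2)$ leftover set can be absorbed into modified versions of these bases. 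Either route will likely require adapting the technical machinery developed for Theorem~\ref{thm:main-packing}; in particular, a strengthening of Theorem~\ref{thm:main-packing} that allows prescribing certain elements to appear in the packing would suffice, and establishing such a strengthening is presumably the main technical step needed for Theorem~\ref{thm:main-covering} beyond Theorem~\ref{thm:main-packing} itself.
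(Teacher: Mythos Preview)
Your proposal is not a proof but a sketch with an acknowledged gap, and the gap is exactly the heart of the matter. The paper explicitly warns (in the introduction) that a near-perfect packing does not by itself give a good covering: even after finding $(1-o(1))n$ disjoint transversal bases, the remaining $o(n^2)$ elements may genuinely require $n$ further transversal bases to cover. Your black-box invocation of Theorem~\ref{thm:main-packing} gives you no control over \emph{which} packing you obtain, so the leftover $U$ could be badly structured---for instance, it could contain a subset $U'$ with $|U'|>K\cdot\rk(U')$, which immediately forces more than $K$ independent sets (and hence more than $K$ transversal bases) to cover it. Neither the random-injection repair nor the vague absorption plan addresses this; making the forced sets $F_k$ independent is not a local issue but a global structural one about $U$, and you have given no argument for why $U$ avoids dense spots.

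The paper's proof does not go through the packing theorem at all. Instead it works directly with a family $\mathcal{T}$ of $\lfloor(1+\lambda)n\rfloor$ disjoint rainbow \emph{independent sets} (not bases, and more than $n$ of them), chosen so that the uncovered set $U$ is well-behaved. The key new idea is a notion of \emph{$k$-deadlock} $D_k(U)$, a maximal ``$k$-overcrowded'' subset of $U$, with the property that $D_k(U)=\emptyset$ is equivalent (via Edmonds' theorem) to $U$ being decomposable into $k$ independent sets. The family $\mathcal{T}$ is chosen by lexicographically minimising a sequence of deadlock sizes; Lemma~\ref{lem:inclusion-maximal-is-large} (which says every inclusion-maximal $E(\mathcal{T})$ is large) guarantees $|U|$ stays small throughout the implicit switching process, and a careful analysis of how deadlocks behave under element swaps drives the $\lfloor\lambda n\rfloor$-deadlock to empty. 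A second optimisation then ensures no colour appears too often in $U$. Finally the Aharoni--Berger theorem decomposes $U$ into $2\lambda n$ rainbow independent sets, which together with $\mathcal{T}$ gives the desired cover. None of this machinery is visible from your outline, and the deadlock apparatus is precisely what handles the ``dense spot'' obstruction your approach cannot.
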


To prove Theorem~\ref{thm:main-packing} and \ref{thm:main-covering}, we modify and extend the methods of \cite{bucic2020halfway} and \cite{pokrovskiy2020rota}, while introducing new randomised approaches to extend families of rainbow independent sets to transversal bases, careful notions of `density' within subsets of the matroid, and intricate techniques to modify and improve families of disjoint rainbow independent sets. Outlines of our proofs can be found in Section~\ref{sec:outline}.

As discussed by Buci\'c, Kwan, Pokrovskiy, and Sudakov \cite{bucic2020halfway}, and by Pokrovskiy~\cite{pokrovskiy2020rota}, a plausible path to a proof of Conjecture~\ref{conj:matroidrota} might be to combine an approximate form of the conjecture with the \emph{absorption method}. Conjecture~\ref{conj:matroidrota} seems to be a particularly complex setting for the use of absorption, but, if the absorption method can be applied, then the two asymptotic forms of Conjecture~\ref{conj:matroidrota} shown here will very likely be helpful.

%%%%%%%%%%%%%%%%%%%%%%%%%%%%%%%%%%%%%%%%%%%%%%%%%%%%%%%%%%%%%%%%%%%%%%%%%
%%%%%%%%%%%%%%%%%%%%%%%%%%%%%%%%%%%%%%%%%%%%%%%%%%%%%%%%%%%%%%%%%%%%%%%%%
%%%%%%%%%%%%%%%%%%%%%%%%%%%%%%%%%%%%%%%%%%%%%%%%%%%%%%%%%%%%%%%%%%%%%%%%%
%%%%%%%%%%%%%%%%%%%%%%%%%%%%%%%%%%%%%%%%%%%%%%%%%%%%%%%%%%%%%%%%%%%%%%%%%

\section{Preliminaries}\label{sec:prelim}

We begin by recalling the definition and some basic properties of matroids (for more details see, for example, the textbook \cite{oxley2011book}).
A matroid $\mathcal{M}$ is given by a finite set $M$ and a collection $\mathcal{I}$ of subsets of $M$ satisfying the following three conditions:
\begin{itemize}
    \item $\emptyset\in \mathcal{I}$.
    \item For any set $S\in \mathcal{I}$, and any subset $S'\su S$, we also have $S'\in \mathcal{I}$.
    \item For any sets $S,T\in \mathcal{I}$ with $|S|<|T|$, there exists an element $t\in T\setminus S$ with $S\cup \{t\}\in \mathcal{I}$.
\end{itemize}
The last condition above is often referred to as the \emph{augmentation property}.
The set $M$ is called the \emph{ground set} of the matroid and the members of $\mathcal{I}$ are called the \emph{independent sets} in the matroid $\mathcal{M}$. Common examples of matroids are given by considering linearly independent sets in vector spaces, or acyclic subsets of the edges of a graph.
Given a matroid $\mathcal{M}$ with ground set $M$, the \emph{rank} of any subset $S\su M$ is defined by
\[\rk(S):=\max\{|S'|\midusedassuchthat  S'\text{ independent},\,S'\su S\}.\]
In other words, the rank of $S$ is the size of the largest independent subset of $S$. Clearly, we have $\rk(S)\le |S|$ (with equality if and only if $S$ itself is independent), and furthermore $\rk(S)\le \rk(T)$ whenever $S\su T$.

The rank of the matroid is the rank of its ground set, i.e., the maximum size of an independent set in the matroid. A maximum size independent set is called a \emph{basis} of the matroid.

Given a matroid $\mathcal{M}$ with ground set $M$, the \emph{span} of a subset $S\su M$ is defined as
\[\spn(S)=\{x\in M\midusedassuchthat \rk(S\cup \{x\})=\rk(S)\}. \]
In other words, $\spn(S)$ is the set consisting of all elements whose addition to $S$ does not increase the rank. Note that for any basis $B$ of $\mathcal{M}$ we have $\spn(B)=M$. Some authors use the name \emph{closure} instead of span. Also note that this concept of span agrees with the linear-algebraic span in matroids obtained from vector spaces (by taking the vector space as the ground set and linearly independent sets as the independent sets of the matroid).

It is not hard to see that $S\su \spn(S)$ and $\rk(\spn(S))=\rk(S)$ for every subset $S\su M$ (see \cite[Lemma 1.4.2]{oxley2011book}). One can also show that $\spn(\spn(S))=\spn(S)$ and $\spn(S)\su \spn(T)$ for any subsets $S\su T\su M$ (see \cite[Lemma 1.4.3]{oxley2011book}).
Finally, for any subsets $S\su T\su M$ with $\rk(S)=\rk(T)$, we must have $\spn(S)= \spn(T)$. Indeed, if there was an element $x\in \spn(T)\setminus \spn(S)$, then we would have $\rk(T\cup \{x\})\ge \rk(S\cup \{x\})>\rk(S)=\rk(T)$, a contradiction.

In this paper, we are concerned with rainbow independent sets in coloured matroids. A \emph{coloured matroid} is simply a matroid in which every element is assigned a colour. A set is called a \emph{rainbow set} if all of its elements have distinct colours. Thus, a rainbow independent set is a set which is independent and consists of elements of distinct colours (this may be interpreted as a set in the intersection of the underlying matroid with the partition matroid given by the colouring, but this perspective is not important in our paper). Furthermore, a rainbow basis is a basis of the matroid whose elements have distinct colours.

Using the augmentation property above repeatedly, one can show that, for any independent sets $S$ and $T$ in a matroid, one can find an independent set $S^*$ with $S\su S^*\su S\cup T$ and $|S^*|\ge |T|$. This last condition can equivalently be stated as $|(S\cup T)\setminus S^*|\le |(S\cup T)\setminus T|$, or again equivalently as $|T\setminus S^*|\le |S\setminus T|$. The following lemma proves a similar statement for rainbow independent sets in coloured matroids.

\begin{lemma}\label{lem:rainbow-independence-simple-adding}
Let $S$ and $T$ be rainbow independent sets in a coloured matroid. Then there exists a rainbow independent set $S^*$ with $S\su S^*\su S\cup T$ and $|T\setminus S^*|\le 2\cdot |S\setminus T|$.
\end{lemma}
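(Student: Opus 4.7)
The plan is to separate the colour constraint from the independence constraint by first discarding the elements of $T$ that are colour-blocked by $S$, and then performing the usual matroid augmentation on what remains. Concretely, I would partition $T \setminus S$ as $T_A \cup T_B$, where $T_A$ consists of those $t \in T \setminus S$ whose colour does not appear in $S$, and $T_B$ consists of those whose colour does. The colour-blocked part $T_B$ is easy to control: for each $t \in T_B$ there is some $s(t) \in S$ of the same colour as $t$, and the map $t \mapsto s(t)$ is injective because the colours in $T$ are distinct. Moreover $s(t) \notin T$, since otherwise by the rainbowness of $T$ we would have $s(t) = t \in S$, contradicting $t \in T \setminus S$. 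Hence $s(t) \in S \setminus T$ and $|T_B| \le |S \setminus T|$.

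The main step is then to extend $S$ inside $S \cup T_A$ to a rainbow independent set of maximum possible size. Since $T$ is independent, the subset $T_A \cup (S \cap T) \su T$ is independent as well, and so
\[\rk(S \cup T_A) \ge |T_A \cup (S \cap T)| = |T_A| + |S \cap T|.\]
Using the augmentation property repeatedly, I would extend $S$ inside $S \cup T_A$ to an independent set $S^*$ with $S \su S^* \su S \cup T_A$ and $|S^*| = \rk(S \cup T_A)$. This $S^*$ is automatically rainbow: $S$ is rainbow, $T_A \su T$ is rainbow, and by the defining property of $T_A$ the colour sets of $S$ and $T_A$ are disjoint.

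Finally, because $S^* \setminus S \su T_A \su T$, we have $|T \cap S^*| = |T \cap S| + |S^* \setminus S|$, and the previous lower bound on $|S^*|$ gives
\[|T \setminus S^*| = |T| - |T \cap S^*| \le (|T| - |T \cap S|) - (|T_A| + |S \cap T| - |S|) = |T_B| + |S \setminus T| \le 2\cdot |S \setminus T|,\]
which is exactly the required inequality. I do not expect a real obstacle here; the only step that uses something nontrivial is the rank bound $\rk(S \cup T_A) \ge |T_A| + |S \cap T|$, which silently relies on the fact that $T_A$ and $S \cap T$ are disjoint subsets of the independent set $T$ — this is precisely where the rainbowness of $T$ (and the colour-based definition of $T_A$) is used.
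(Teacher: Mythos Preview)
Your proof is correct and takes a genuinely different (though closely related) route from the paper's. The paper first augments $S$ using all of $T$ to an independent set $S'$ with $|S'|\ge |T|$ (which may fail to be rainbow), and then deletes at most $|S\setminus T|$ elements of $S'\setminus S$ to resolve colour conflicts. You instead pre-filter $T$ by discarding the colour-blocked part $T_B$ and augment $S$ only inside $S\cup T_A$, so that rainbowness comes for free and no deletion step is needed; the rank bound $\rk(S\cup T_A)\ge |T_A|+|S\cap T|$ then does the work that the size bound $|S'|\ge |T|$ does in the paper. Both arguments account for the factor $2$ via the same two contributions (one $|S\setminus T|$ from colour conflicts, one from independence loss), just in opposite orders; neither is materially simpler, though your version has the minor aesthetic advantage that the final $S^*$ is built directly rather than by augment-then-delete.
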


We give the simple proof of this lemma in Section~\ref{sec:simple-matroid-lemmas}, in which we collect the proofs of various statements about matroids used in this paper.

In the settings of Theorems~\ref{thm:main-packing} and \ref{thm:main-covering}, we are given a matroid $\mathcal{M}$ of rank $n$ and bases $B_1,\dots,B_n$. We may assume that the bases $B_1,\dots,B_n$ are disjoint, by ``duplicating'' elements appearing in more than one of these bases. More formally, if an element $x$ appears in the bases $B_{i(1)},\dots,B_{i(k)}$ (with $k\ge 2$), we define a matroid $\mathcal{M}'$ where we replace $x$ by $k$ different elements $x_1,\dots,x_k$ in the ground set, and we replace $x$ by $x_j$ in each of the bases $B_{i(j)}$ (for $j=1,\dots,k$). In this new matroid $\mathcal{M}'$ a set is independent if it contains at most one of the elements $x_1,\dots,x_k$ and it corresponds to an independent set in $\mathcal{M}$ when identifying  $x_1,\dots,x_k$ with $x$. Repeating this operation for all elements appearing in more than one of the bases $B_1,\dots,B_n$, we can ensure that the bases $B_1,\dots,B_n$ are disjoint.

Thus, in the settings of Theorems~\ref{thm:main-packing} and \ref{thm:main-covering}, we may consider a matroid of rank $n$ with disjoint bases $B_1,\dots,B_n$. We may furthermore restrict the ground set of the matroid to be $B_1\cup \dots \cup B_n$ (note that all elements outside $B_1\cup \dots \cup B_n$ are irrelevant to the conclusions of Theorems~\ref{thm:main-packing} and \ref{thm:main-covering}). Now, having a matroid with ground set $B_1\cup \dots \cup B_n$ and disjoint bases $B_1,\dots ,B_n$, we can naturally define a colouring: We simply say that an element $x\in B_1\cup \dots \cup B_n$ has colour $i$ if $x\in B_i$. We denote the colour of any element $x\in B_1\cup \dots \cup B_n$ by $c(x)$  (note that then we have $x\in B_{c(x)}$). A transversal basis is now simply a rainbow basis.

We close this section by introducing some notational conventions, and stating two more matroid theory lemmas which we will use throughout this paper.
For a family $\mathcal{T}$ of disjoint rainbow independent sets in a coloured matroid, we write $E(\mathcal{T})=\bigcup_{T\in \mathcal{T}} T$ for the set of matroid elements covered by the sets in $\mathcal{T}$.
For a subset $S$ of a matroid and an element $x\in S$, we write $S-x$ for the set $S\setminus \{x\}$ obtained when removing $x$ from $S$ (we will not use the notation $S-x$ for $S\setminus \{x\}=S$ if $x\not\in S$). Similarly, for an element $y$ of the matroid, we write $S+y$ for the set obtained from $S$ when adding $y$. In the case, where $y$ is already contained in $S$, we interpret $S+y$ as a multi-set. Note that, in this case, the set $S+y$ cannot be independent (since it contains the element $y$ twice).

The following statement is a slight strengthening of Lemma~2.7 in \cite{bucic2020halfway} and, as noted there, also follows from a result of Brualdi~\cite{brualdi1969comments}. We include a proof in Section~\ref{sec:simple-matroid-lemmas}.
\begin{lemma}\label{lem:injecttoBi}
    Let $S$ be an independent set in some matroid and let $B$ be a basis of the matroid. Then, there is an injection $\phi: S\to B$ such that, for each $x\in S$, the set $S-x+\phi(x)$ is independent, and, for each $b\in B\setminus \phi(S)$, the set $S+b$ is independent.
\end{lemma}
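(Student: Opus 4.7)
The plan is to reduce the lemma to a strengthened form of Brualdi's bijective basis-exchange~\cite{brualdi1969comments}: for any two bases $B_1, B_2$ of a matroid, there is a bijection $\psi : B_1 \to B_2$ with $\psi(x) = x$ for every $x \in B_1 \cap B_2$, and with $B_1 - x + \psi(x)$ a basis for every $x \in B_1$.

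Given this, the lemma follows quickly. I would first extend $S$ to a basis $T$ of the matroid (via the augmentation property) and then apply the above to the pair $(T, B)$, obtaining $\psi : T \to B$ fixing $T \cap B$ pointwise and with $T - y + \psi(y)$ a basis for every $y \in T$. Setting $\phi := \psi|_S$ gives an injection $S \to B$ (since $S \su T$). For $x \in S$, the basis $T - x + \psi(x)$ contains $S - x + \phi(x)$, so the latter is independent. For $b \in B \setminus \phi(S)$, let $y := \psi^{-1}(b) \in T \setminus S$; observe that $b \notin S$, since otherwise $b \in S \cap B \su T \cap B$ would force $\psi(b) = b \in \phi(S)$, contradicting the choice of $b$. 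Thus $S + b$ is a proper set, and since $y \notin S$ we have $S + b \su T - y + b$, a basis; hence $S + b$ is independent.

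The main obstacle is establishing the strengthened Brualdi exchange, which I would prove via Hall's theorem applied to the bipartite graph with parts $T \setminus B$ and $B \setminus T$, with edges $\{y, b\}$ whenever $T - y + b$ is a basis, matching $T \cap B$ to itself by the identity. To verify Hall's condition, fix $X \su T \setminus B$ with neighbourhood $Y \su B \setminus T$. For any $b \in (B \setminus T) \setminus Y$ and any $y \in X$, the set $T - y + b$ is dependent, meaning $y$ does not lie on the fundamental circuit of $b$ in $T$ (the unique circuit in $T + b$); hence this circuit is disjoint from $X$, so $b \in \spn((T \cap B) \cup ((T \setminus B) \setminus X))$. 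Consequently, the independent set $A := (T \cap B) \cup ((B \setminus T) \setminus Y) \su B$ is contained in this span, and comparing ranks yields $|A| \le |T \cap B| + |T \setminus B| - |X|$, which rearranges to $|Y| \ge |X|$, as required.
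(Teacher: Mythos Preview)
Your proof is correct and follows essentially the same route as the paper: extend $S$ to a basis, apply Brualdi's bijective basis exchange between the two bases, and restrict to $S$. The only cosmetic difference is that you explicitly state and prove the identity-on-intersection strengthening of Brualdi (via Hall on $(T\setminus B)\times(B\setminus T)$), whereas the paper's multiset convention for $B'-x'+x$ forces $\psi(x)=x$ automatically whenever $x\in B\cap B'$, so the strengthening is implicit in their version of the lemma and their Hall argument on the full bipartite graph.
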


Recall that here $S-x+\phi(x)$ and $S+b$ are to be interpreted as multi-sets (namely, obtained from $S-x=S\setminus\{x\}$ when adding $\phi(x)$, and from $S$ when adding $b$, respectively). That these sets are independent automatically means that they cannot contain an element twice, meaning that $\phi(x)\not\in S-x$ and $b\not\in S$.

Our final lemma in this section is the following, and is also proved in Section~\ref{sec:simple-matroid-lemmas}.

\begin{lemma}\label{lem:injecttST}
For any independent sets $S$ and $T$ in a matroid, at least one of the following statements holds.
\begin{enumerate}[label = \emph{(\alph{enumi})}]
\item\labelinthm{injopt:1} There is some $x\in S$ such that the set $T+x$ is independent.
\item\labelinthm{injopt:2} There is an injection $\phi: S\to T$ such that, for each $x\in S$, the set $T-\phi(x)+x$ is independent and $\spn(T-\phi(x)+x)=\spn(T)$.
\end{enumerate}
\end{lemma}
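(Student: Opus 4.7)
The plan is to argue by dichotomy. If statement \itref{injopt:1} holds, we are done, so assume instead that $T+x$ is dependent for every $x \in S$. For $x \notin T$ this forces $x \in \spn(T)$ by the definition of span, while for $x \in T$ it is automatic; hence in all cases $S \subseteq \spn(T)$, and in particular $|S| \le \rk(\spn(T)) = |T|$.

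Under this assumption I construct the injection $\phi$ using Hall's marriage theorem. For each $x \in S$, define $C(x) \subseteq T$ by $C(x) = \{x\}$ if $x \in T$, and $C(x) = C(x,T) \setminus \{x\}$ otherwise, where $C(x,T) \subseteq T+x$ is the unique circuit of $T+x$ (which exists and contains $x$ since $T$ is independent and $x \in \spn(T) \setminus T$). By standard circuit exchange, $T - y + x$ is independent for every $y \in C(x)$ (trivially so if $x \in T$, since then $T-x+x = T$). Hall's theorem then provides an injection $\phi\colon S \to T$ with $\phi(x) \in C(x)$ for all $x \in S$, provided the Hall condition $|U_{S'}| \ge |S'|$ holds for every $S' \subseteq S$, where $U_{S'} := \bigcup_{x \in S'} C(x)$.

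Verifying this Hall condition is the heart of the argument. Fix $S' \subseteq S$ and write $U = U_{S'} \subseteq T$. For each $x \in S'$: if $x \in T$ then $x \in C(x) \subseteq U$; if $x \notin T$ then $C(x,T) \setminus \{x\} = C(x) \subseteq U$, and since $C(x,T)$ is a circuit containing $x$ we have $x \in \spn(C(x,T) \setminus \{x\}) \subseteq \spn(U)$. Either way $S' \subseteq \spn(U)$, so $\rk(S' \cup U) = \rk(U) = |U|$, the last equality because $U \subseteq T$ is independent. Since $S'$ is itself independent, $|S'| \le \rk(S' \cup U) = |U|$, as required.

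Finally, for each $x \in S$ the set $T - \phi(x) + x$ has size $|T|$ and is independent, so $\rk(T - \phi(x) + x) = |T| = \rk(T + x)$, the second equality using $x \in \spn(T)$. Both $T - \phi(x) + x$ and $T$ are subsets of $T + x$ of rank $|T|$, so the ``equal rank implies equal span'' fact recalled in Section~\ref{sec:prelim} yields $\spn(T - \phi(x) + x) = \spn(T+x) = \spn(T)$. I anticipate no serious obstacle here: the only mild subtlety is the case $x \in S \cap T$, which the convention $C(x) = \{x\}$ absorbs cleanly by forcing $\phi(x) = x$.
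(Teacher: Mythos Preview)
Your proof is correct. The approach differs from the paper's: the paper extends $S$ and $T$ to bases $B\supseteq S$ and $B'\supseteq T$ and invokes Brualdi's lemma (Lemma~\ref{lem:two-bases-hall}) to obtain a bijection $\psi\colon B\to B'$ with $B'-\psi(x)+x$ independent for every $x\in B$; if some $x\in S$ has $\psi(x)\notin T$ then \itref{injopt:1} holds, and otherwise restricting $\psi$ to $S$ gives the required injection into $T$. You instead work directly with fundamental circuits and verify Hall's condition on the sets $C(x)\subseteq T$ by the rank inequality $|S'|=\rk(S')\le\rk(U_{S'})=|U_{S'}|$. The two arguments are close in spirit, since the paper's proof of Lemma~\ref{lem:two-bases-hall} is itself a Hall argument on essentially the same system of sets, but your route avoids the basis extension step and is slightly more self-contained. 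The paper's route, in turn, reuses a lemma already needed for Lemma~\ref{lem:injecttoBi}, so in the context of the paper it comes at no extra cost.
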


Again, in \ref{injopt:1}, the set $T+x$ is to be interpreted as a multi-set. So if \ref{injopt:1} holds, this automatically means that $T+x$ cannot contain an element twice, and so we have $x\not\in T$. Similarly, if \ref{injopt:2} holds, this automatically means that $x\not\in T-\phi(x)$ for every $x\in S$.

Our notation follows general convention. In particular, for a positive integer $n$, we write $[n]=\{1,\dots,n\}$ as usual. For a bipartite graph with vertex classes $X$ and $Y$, by slight abuse of notation, we interpret its edges to be pairs of the form $(x,y)$ with $x\in X$ and $y\in Y$, meaning that its edge set can be interpreted as a subset of $X\times Y$.

%%%%%%%%%%%%%%%%%%%%%%%%%%%%%%%%%%%%%%%%%%%%%%%%%%%%%%%%%%%%%%%%%%%%%%%%%%%%%%
%%%%%%%%%%%%%%%%%%%%%%%%%%%%%%%%%%%%%%%%%%%%%%%%%%%%%%%%%%%%%%%%%%%%%%%%%%%%%%
%%%%%%%%%%%%%%%%%%%%%%%%%%%%%%%%%%%%%%%%%%%%%%%%%%%%%%%%%%%%%%%%%%%%%%%%%%%%%%

\section{Proof outlines}
\label{sec:outline}

In this section, we give outlines of the proofs of our main results in Theorems~\ref{thm:main-packing} and \ref{thm:main-covering}, and state several key lemmas used in these proofs. The proof of Theorem~\ref{thm:main-covering} can be found in Section~\ref{sec:proof-main-covering}, using the results in Section~\ref{sect-first-proof}. The proof of Theorem~\ref{thm:main-packing} can be found in Section~\ref{sec:finish-packing}, using the results in Section~\ref{sec:proof-almostalmost-new} (which, in turn, build on some of the results in Section~\ref{sect-first-proof}).

\subsection{Covering theorem}\label{sec-outline-covering}

As discussed in the previous section, in the setting of our covering theorem (Theorem~\ref{thm:main-covering}), we can consider a coloured matroid of rank $n$, where the colour classes are bases $B_1,\dots,B_n$. We then need to show that their union $B_1\cup\dots\cup B_n$ can be covered by $(1+\eps)n$ rainbow bases. This is equivalent to showing that $B_1\cup\dots\cup B_n$ can be decomposed into $(1+\eps)n$ rainbow independent sets (indeed, given such a decomposition into rainbow independent sets, it is not hard to extend these rainbow independent sets to rainbow bases greedily using the augmentation property of the matroid).

Therefore, our goal is to find a family of $(1+\eps)n$ disjoint rainbow independent sets whose union is the entire ground set $B_1\cup\dots\cup B_n$. Instead of finding such a family directly, for some $\lambda<\eps$ (which we will later choose as $\lambda=\eps/3$), we aim to find a family $\mathcal{T}$ of $(1+\lambda)n$ disjoint rainbow independent sets such that the set of leftover elements not covered by $\mathcal{T}$ can also be decomposed into few (namely, at most $(\eps-\lambda)n$) rainbow independent sets. As discussed in the previous section, we denote the set of elements covered by $\mathcal{T}$ as $E(\mathcal{T})=\bigcup_{T\in \mathcal{T}} T$. Now, letting $U=(B_1\cup \dots\cup B_n)\setminus E(\mathcal{T})$ be the set of uncovered elements, we want to decompose $U$ into few rainbow independent sets. In order to have any hope for such a decomposition, we need $U$ to be sufficiently small (namely, we need $|U|\le (\eps-\lambda)n^2$ if $U$ is to be decomposed into at most $(\eps-\lambda)n$ rainbow independent sets).

It follows from Pokrovskiy's result \cite{pokrovskiy2020rota} stated as Theorem~\ref{thm:pokrovskiy} that it is possible to find a family $\mathcal{T}$ of $\lfloor(1+\lambda)n\rfloor$ disjoint rainbow independent sets such that the set $U=(B_1\cup \dots\cup B_n)\setminus E(\mathcal{T})$ of uncovered elements is rather small. However, instead of Theorem~\ref{thm:pokrovskiy}, we need the following stronger statement, in which the set $U$ of uncovered elements is small for \emph{every} family $\mathcal{T}$ of $\lfloor(1+\lambda)n\rfloor$ disjoint rainbow independent sets such that $E(\mathcal{T})$ is \emph{inclusion-wise maximal}. Here, $E(\mathcal{T})$ being inclusion-wise maximal means that there does not exist a family $\mathcal{S}$ of $\lfloor(1+\lambda)n\rfloor$ disjoint rainbow independent sets with $E(\mathcal{T})\subsetneqq E(\mathcal{S})$.

\begin{lemma}\label{lem:inclusion-maximal-is-large}
For any $\lambda,\nu>0$ the following holds for any sufficiently large $n$ (sufficiently large with respect to $\lambda$ and $\nu$) and any coloured rank-$n$ matroid with colour classes $B_1,\dots,B_n$, such that each of the sets $B_i$ for $i=1,\dots,n$ is a basis. Let $\mathcal{T}$ be a family of $\lfloor (1+\lambda)n\rfloor$ disjoint rainbow independent sets such that $E(\mathcal{T})$ is inclusion-wise maximal. Then we have $|E(\mathcal{T})|\ge (1-\nu)n^2$.
\end{lemma}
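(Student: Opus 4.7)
The plan is to argue by contradiction: suppose $\mathcal{T}=(T_1,\dots,T_m)$ with $m=\lfloor(1+\lambda)n\rfloor$ has $E(\mathcal{T})$ inclusion-wise maximal but $|E(\mathcal{T})|<(1-\nu)n^2$, and I will construct a family $\mathcal{T}'$ of $m$ disjoint rainbow independent sets with $E(\mathcal{T})\subsetneqq E(\mathcal{T}')$, contradicting maximality. Throughout, let $U=(B_1\cup\dots\cup B_n)\setminus E(\mathcal{T})$ be the set of uncovered elements, so $|U|>\nu n^2$.

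The first step is to invoke Theorem~\ref{thm:pokrovskiy} together with Kwan's greedy extension (observed just after Theorem~\ref{thm:pokrovskiy}) with $\eps$ chosen much smaller than both $\nu$ and $\lambda$: this gives $n$ disjoint rainbow independent sets of size at least $(1-\eps)n$ each, which after padding with $m-n$ empty sets yields a family $\mathcal{S}=(S_1,\dots,S_m)$ of $m$ disjoint rainbow independent sets with $|E(\mathcal{S})|\ge(1-\eps)n^2$. Since $|E(\mathcal{S})\cap U|\ge |E(\mathcal{S})|-|E(\mathcal{T})|>(\nu-\eps)n^2$, a pigeonhole argument supplies some $S_{j_0}$ with $|S_{j_0}\cap U|\gtrsim \nu n/(1+\lambda)$; write $W:=S_{j_0}\cap U$, which is a rainbow independent set of linear size lying entirely in $U$.

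Next, I would extract the structural consequence of inclusion-wise maximality: for every $w\in W$ and every $\ell\in[m]$, the set $T_\ell+w$ cannot be a rainbow independent set, so either $c(w)\in c(T_\ell)$ or $w\in\spn(T_\ell)$. In either case, Lemma~\ref{lem:injecttST} (applied to $S=\{w\}$ and $T=T_\ell$) supplies a local swap move: either $T_\ell+w$ is already independent and one can exchange the existing colour-$c(w)$ element, or Lemma~\ref{lem:injecttST}\itref{injopt:2} provides an element $t\in T_\ell$ with $T_\ell-t+w$ independent and spanning the same subspace as $T_\ell$. Either way, the removed element must be placed into another $T_k$ for the modification to genuinely enlarge $E(\mathcal{T})$. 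This naturally leads to an alternating-swap argument: starting from $w_0\in W$, we produce a sequence of displacements $t_0,t_1,\dots$, each to be re-placed, using Lemma~\ref{lem:injecttoBi} at each step to certify that the resulting set remains a rainbow independent set. The process terminates successfully as soon as some displaced $t_i$ can be added to some $T_k$ without further displacement, at which point the modified family $\mathcal{T}'$ satisfies $E(\mathcal{T}')\supsetneqq E(\mathcal{T})$.

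The hardest step will be to guarantee that a terminating alternating sequence exists under the assumption $|U|>\nu n^2$. A naive double count of pairs $(w,\ell)$ with $w\in W\cap\spn(T_\ell)$, bounding $|W\cap\spn(T_\ell)|\le\rk(\spn(T_\ell))=|T_\ell|$ against the lower bound $|W|\cdot(m-n)\ge|W|\lambda n$ coming from the fact that each $w\in W$ must be trapped in $\spn(T_\ell)$ for all but at most $n$ indices $\ell$, forces $|W|\le|E(\mathcal{T})|/(\lambda n)$, which yields a contradiction only when $\nu$ is very close to $1$. The genuine argument therefore has to use the Pokrovskiy family $\mathcal{S}$ not only as a source of entry points $W\subseteq U$ but also to supply escape routes for displaced elements along each alternating path, and I expect it requires careful tracking of colour-class and rank-densities to show that the blocking structure imposed by inclusion-wise maximality is incompatible with $|U|>\nu n^2$ for sufficiently large $n$.
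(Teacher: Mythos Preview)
Your proposal has a genuine gap that you yourself identify: the termination of the alternating-swap cascade is never established. You write that ``the hardest step will be to guarantee that a terminating alternating sequence exists'' and that you ``expect it requires careful tracking of colour-class and rank-densities,'' but no such argument is supplied. The naive double count you sketch only works when $\nu>1/\lambda$, which is useless for small $\lambda$, and nothing in the outline explains how the Pokrovskiy family $\mathcal{S}$ would furnish escape routes for displaced elements. As written, the argument stalls exactly at the point where the real content of the lemma lies.

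There is also a structural issue with invoking Theorem~\ref{thm:pokrovskiy}: in this paper Lemma~\ref{lem:inclusion-maximal-is-large} is the more fundamental statement, and Theorem~\ref{thm:pokrovskiy} is derived from it at the end of Section~\ref{sect-first-proof}. Relying on Theorem~\ref{thm:pokrovskiy} here would make the paper's development circular, even if Pokrovskiy's original proof exists independently.

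The paper's actual argument avoids both problems. Rather than chasing a single alternating path from a fixed $w\in U$, it iterates Pokrovskiy's \emph{$\ell$-reduction} (Definition~\ref{def:reduction}): delete from each $T\in\mathcal{T}$ every element that could be moved to at least $\ell$ other sets, and repeat $r$ times to obtain $\mathcal{T}^{(0)},\dots,\mathcal{T}^{(r)}$. Lemma~\ref{lem:switching-reduction} guarantees that if \emph{any} $e\in U$ can be added to \emph{any} $T\in\mathcal{T}^{(i)}$ for \emph{any} $i\le r$, then a bounded cascade produces a family $\mathcal{S}$ with $E(\mathcal{S})=E(\mathcal{T})+e$, contradicting inclusion-wise maximality. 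The heart of the proof is then Lemma~\ref{lem:threeoutcomes}, a trichotomy established by a weighted counting argument on the colour-availability graph (Corollary~\ref{cor:findinggoodedges}): for each $\mathcal{T}^{(i)}$, either some set is tiny, or some $e\in U$ can be added, or the next reduction shrinks $E(\mathcal{T}^{(i)})$ by at least $\gamma n^2$. The first two options are ruled out (the first by $|U|>\nu n^2$, the second by maximality via Lemma~\ref{lem:switching-reduction}), so the reductions shrink by $\gamma n^2$ at every step, and after $r>1/\gamma$ steps one obtains $|E(\mathcal{T}^{(r)})|<0$. The $\ell$-reduction machinery is precisely what replaces your unproven termination claim: it turns the question ``does some cascade terminate?'' into the quantitative statement ``either a short cascade works or the family shrinks uniformly,'' and the weighted edge count in Corollary~\ref{cor:findinggoodedges} is what makes that dichotomy go through.
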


This lemma is proved in Section~\ref{sect-first-proof}, and is one of the key ingredients for the proof of Theorem~\ref{thm:main-covering}. It also directly implies Theorem~\ref{thm:pokrovskiy}, as we will show at the end of Section~\ref{sect-first-proof} (so Section~\ref{sect-first-proof} gives a self-contained proof of Theorem~\ref{thm:pokrovskiy} which is somewhat simpler than the original proof in \cite{pokrovskiy2020rota}). The proof of Lemma~\ref{lem:inclusion-maximal-is-large} uses a similar approach as in \cite{pokrovskiy2020rota}, but with some crucial differences, enabling us to obtain the stronger statement in Lemma~\ref{lem:inclusion-maximal-is-large}, which we need for the proof of Theorem~\ref{thm:main-covering}. Some arguments from the proof of Lemma~\ref{lem:inclusion-maximal-is-large} will also later be used in the proof of Theorem~\ref{thm:main-packing}.

The conclusion $|E(\mathcal{T})|\ge (1-\nu)n^2$ of Lemma~\ref{lem:inclusion-maximal-is-large} means that $|U|=n^2-|E(\mathcal{T})|\le \nu n^2$. Of course, just knowing that $U$ has size at most $\nu n^2$ is not enough to decompose $U$ into few rainbow independent sets.
To ensure the existence of such a decomposition, we will need two clearly necessary conditions: that $U$ can be decomposed into few rainbow sets and that $U$ can be decomposed into few independent sets.
It was shown by Aharoni and Berger \cite{aharoni2006intersection} in 2006 that these two conditions  are actually sufficient for a set $U$ to be decomposable into (almost as) few rainbow independent sets (see Theorem~\ref{thm:aharoni-berger} for a restatement of their result). In particular, their result implies that, if $U$ can be decomposed both into $\lambda n$ rainbow sets and into $\lambda n$ independent sets, then $U$ can be decomposed in at most $2\lambda n$ rainbow independent sets. If we can achieve this, then, together with the at most $(1+\lambda)n$ rainbow independent sets in the family $\mathcal{T}$, this would give a decomposition of $B_1\cup \dots\cup B_n$ into at most $(1+3\lambda)n$ rainbow independent sets (so, taking $\lambda=\eps/3$, this gives the desired conclusion).

The first condition, demanding that $U$ is decomposable into $\lambda n$ rainbow sets, is equivalent to requiring that every colour appears in
$U$ at most $\lambda n$ times. On its own, ensuring this would not require significantly more effort than proving Lemma~\ref{lem:inclusion-maximal-is-large}. Much more difficult will be the second condition about decomposing $U$ into independent sets.
A classical theorem of Edmonds \cite{edmonds1965matroiddecomp} implies that $U$ can be decomposed into at most $\lambda n$ independent sets if and only if $|U'|\le \lfloor \lambda n\rfloor\cdot \rk(U')$ for all subsets $U'\su U$. This is clearly necessary, as every independent subset of $U$ can contain at most $\rk(U')$ elements from $U'$.

Effectively, then, we need to make sure that $U$ does not have any ``dense spots'' $U'$ with $|U'|> \lfloor \lambda n\rfloor\cdot \rk(U')$. However, it turns out that keeping track of all subsets $U'\su U$ with $|U'|> \lfloor \lambda n\rfloor\cdot \rk(U')$ is very difficult. In order to overcome this, we work with a different notion of ``dense spots''. We call these dense spots \emph{deadlocks}, and introduce this notion in the first part of Section~\ref{sec:proof-main-covering}. Crucially, the $\lfloor \lambda n\rfloor$-deadlock of $U$ has a non-empty intersection with every subset $U'\su U$ such that $|U'|> \lfloor \lambda n\rfloor\cdot \rk(U')$ (this intersection can be viewed as the ``dense part'' of $U'$ that causes $U'$ to be dense overall). Thus, if the  $\lfloor \lambda n\rfloor$-deadlock of $U$ is empty, then there cannot be any subsets $U'\su U$ with $|U'|> \lfloor \lambda n\rfloor\cdot \rk(U')$, and we will therefore be able to decompose $U$ into $\lfloor \lambda n\rfloor$ independent sets. Our goal is thus to find a family $\mathcal{T}$ of $\lfloor(1+\lambda)n\rfloor$ disjoint rainbow independent sets such that $U=(B_1\cup \dots\cup B_n)\setminus E(\mathcal{T})$ contains every colour at most $\lambda n$ times and such that the $\lfloor \lambda n\rfloor$-deadlock of $U$ is empty.

Using Lemma~\ref{lem:inclusion-maximal-is-large}, we can find  a family $\mathcal{T}$ of $\lfloor(1+\lambda)n\rfloor$ disjoint rainbow independent sets such that $U=(B_1\cup \dots\cup B_n)\setminus E(\mathcal{T})$ satisfies $|U|\leq \nu n^2$. Later, we will denote the $\lfloor \lambda n\rfloor$-deadlock of $U$ by $D_{\lfloor \lambda n\rfloor}(U)$, but in this sketch we will simply call it $D(U)$.
This deadlock will have the property that removing any element $e$ in $D(U)$ from $U$ will reduce the size of $D(U)$, a good thing as we wish to reduce $D(U)$ to the empty set. But in order to remove $e$ from $U$, we need to add it to some $T\in \mathcal{T}$, and there will likely\footnote{If we choose $\mathcal{T}$ as in Lemma~\ref{lem:inclusion-maximal-is-large} such that $E(\mathcal{T})$ is inclusion-wise maximal, then definitely there will be no such $T$.} be no such $T$ for which $T+e$ is a rainbow independent set. So, we will need to pick some $T\in \mathcal{T}$ and, when adding $e$ to $T$, in order to still have a rainbow independent set, we need to remove one or two other elements from $T$ (perhaps one element to maintain rainbowness and perhaps one element to maintain independence). In other words, we replace $T\in \mathcal{T}$ by $(T+e)\setminus F$, where $F$ is a set of at most two elements of $T$ (see part a) of Figure~\ref{fig:alterationsforcovering}). But then the elements in $F$ get added to the uncovered set $U$, and, while removing $e$ from $U$ decreases the size of the deadlock $D(U)$, adding the elements in $F$ to $U$ may increase the size of $D(U)$ again! Therefore, we will have to choose the element $e\in U$, the set $T\in \mathcal{T}$, and the elements in $F$ very carefully.

Utilising the meticulous definition for our notion of deadlock, it turns out that the right condition for this is to ensure that $F$ is disjoint from the span of the deadlock $D'(U)$ defined by a slightly different parameter (namely, to be precise, the $(\lfloor \lambda n\rfloor-2)$-deadlock of $U$). This parameter choice will mean that we have $D(U)\su D'(U)$. However, in order to be able to choose $e$, $T$ and $F$ such that $F$ is disjoint from the span of the deadlock $D'(U)$, we also want $D'(U)$ not to be too much larger than $D(U)$ (more, precisely, we want the rank of $D'(U)$ not to be too much larger than the rank of $D(U)$).

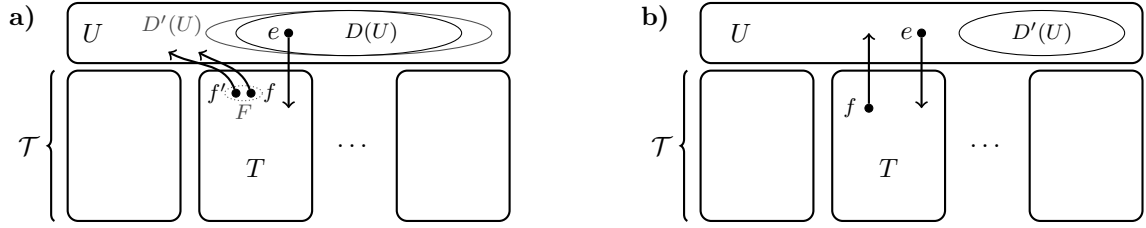
\begin{figure}
\centering
\begin{tikzpicture}
				\def\spacer{1.75};
				\def\Ahgt{1.3};
				\def\Bhgt{1.3};
				\def\ver{0.5};

        \def\verr{1};
        \def\wid{0.75};
				\def\widd{0.75};
        	\def\verrr{0.4};

\coordinate (A) at (0,0);
\coordinate (B) at (\spacer,0);
\coordinate (C) at (1.5*\spacer,0);
\coordinate (D) at (2.5*\spacer,0);

\coordinate (Bplus) at (2*\spacer,0);
\coordinate (Cplus) at (4*\spacer,0);

\coordinate (U) at ($0.5*(B)+0.5*(C)+(0,3*\ver)+(0.4,0)$);
\coordinate (UforD) at ($0.5*(B)+0.5*(C)+(0,3*\ver)+(1,0)$);
\coordinate (Ucent) at ($0.5*(B)+0.5*(C)+(0,3*\ver)$);

\draw  [black!70] ($(UforD)-(0.2,0)$) circle [x radius=1.9,y radius=0.3];

\foreach \midd in {A,B,D}
{
\draw [thick,rounded corners] ($(\midd)-(\widd,0)$) -- ++(0,\verr)
-- ++(2*\widd,0) -- ++(0,-2*\verr) -- ++(-2*\widd,0) -- cycle;
}

\draw ($0.5*(B)+0.5*(D)$) node {$\dots$};

\draw [thick,rounded corners] ($(Ucent)-(1.25*\spacer+\widd,0)$) -- ++(0,\verrr)
-- ++(2.5*\spacer+2*\widd,0) -- ++(0,-2*\verrr) -- ++(-2.5*\spacer-2*\widd,0) -- cycle;

\draw ($(Ucent)-(1.25*\spacer+\widd,-\verrr)-(0.6,0.2)$) node {\bfseries a)};

\node[vertex] (e) at ($(U)+(-0.4,0)$) {};
\draw ($(e)-(0.2,0)$) node {$e$};
\draw [thick,->](e) --++(0,-1);

\node[vertex] (f) at ($(e)+(0,-0.8)+(-0.5,0)$) {};
\node[vertex] (fpr) at ($(e)+(0,-0.8)+(-0.7,0)$) {};
\draw ($(f)+(0.25,0)$) node {\footnotesize $f$};
\draw ($(fpr)+(-0.25,0)$) node {\footnotesize $f'$};
%\draw ($(fpr)+(-0.25,-0.15)$) node {\footnotesize $f'$};

\draw [densely dotted,black!70] ($0.5*(f)+0.5*(fpr)$) circle [x radius=0.25,y radius=0.1];

\draw [black!70] ($0.5*(f)+0.5*(fpr)+(0,-0.25)$) node {\footnotesize $F$};

\draw [thick,->] (f) to[out=110,in=330] ($(f)+(-0.7,0.545)$);
\draw [thick,->] (fpr) to[out=110,in=330] ($(fpr)+(-0.9,0.55)$);

\draw ($(B)-(0,0.3)$) node {$T$};

\draw ($(UforD)-(0.2,0)$) circle [x radius=1.5,y radius=0.3];

\draw ($(UforD)+(0.1,0)$) node { \footnotesize $D(U)$};
\coordinate (U1) at ($(U)+(-2.7,0.1)$);
\draw [black!70] ($(U1)+(0.75,0)$) node {\footnotesize $D'(U)$};
%\draw ($(U1)-(0.1,0)+(0.6,0)$) node {\footnotesize $D'(U)$};
%\draw [->] ($(U1)+(0.35,0)+(0.6,0)$) -- ($(U)+(0.6,0)-(2.15,0)+(0.05,0.1)$);

\draw ($(Ucent)-(2.6,0)$) node {$U$};
\draw[decorate,decoration={brace,mirror,raise=5pt},thick]
  (-\widd,\verr) -- (-\widd,-\verr) node[midway,left=7pt]{$\mathcal{T}$};
\end{tikzpicture}\hspace{1.5cm}
\begin{tikzpicture}
				\def\spacer{1.75};
				\def\Ahgt{1.3};
				\def\Bhgt{1.3};
				\def\ver{0.5};

        \def\verr{1};
        \def\wid{0.75};
				\def\widd{0.75};
        	\def\verrr{0.4};

\coordinate (A) at (0,0);
\coordinate (B) at (\spacer,0);
\coordinate (C) at (1.5*\spacer,0);
\coordinate (D) at (2.5*\spacer,0);

\coordinate (Bplus) at (2*\spacer,0);
\coordinate (Cplus) at (4*\spacer,0);

\coordinate (U) at ($0.5*(B)+0.5*(C)+(0,3*\ver)+(0.4,0)$);
\coordinate (UforD) at ($0.5*(B)+0.5*(C)+(0,3*\ver)+(1,0)$);
\coordinate (Ucent) at ($0.5*(B)+0.5*(C)+(0,3*\ver)$);

\draw ($(Ucent)-(1.25*\spacer+\widd,-\verrr)-(0.6,0.2)$) node {\bfseries b)};

\foreach \midd in {A,B,D}
{
\draw [thick,rounded corners] ($(\midd)-(\widd,0)$) -- ++(0,\verr)
-- ++(2*\widd,0) -- ++(0,-2*\verr) -- ++(-2*\widd,0) -- cycle;
}

\draw ($0.5*(B)+0.5*(D)$) node {$\dots$};

\draw [thick,rounded corners] ($(Ucent)-(1.25*\spacer+\widd,0)$) -- ++(0,\verrr)
-- ++(2.5*\spacer+2*\widd,0) -- ++(0,-2*\verrr) -- ++(-2.5*\spacer-2*\widd,0) -- cycle;

\node[vertex] (e) at ($(U)+(-0.4,0)$) {};
\draw ($(e)-(0.2,0)$) node {$e$};
\draw [thick,->](e) --++(0,-1);

\node[vertex] (f) at ($(e)+(0,-1)+(-0.7,0)$) {};
\draw ($(f)+(-0.25,0)$) node {\footnotesize $f$};

\draw [thick,->](f) --++(0,1);

\draw ($(B)-(0,0.3)$) node {$T$};

\draw ($(UforD)+(0.6,0)$) circle [x radius=1.1,y radius=0.3];
\draw ($(UforD)+(0.6,0)$) node { \footnotesize $D'(U)$};

\draw ($(Ucent)-(2.4,0)$) node {$U$};
\draw[decorate,decoration={brace,mirror,raise=5pt},thick]
  (-\widd,\verr) -- (-\widd,-\verr) node[midway,left=7pt]{$\mathcal{T}$};
\end{tikzpicture}
    \caption{\textbf{a)} Given a family $\mathcal{T}$, and its set of uncovered elements $U$, in order to make the deadlock $D(U)$ smaller, we look for some $e\in D(U)$, a set $T\in \mathcal{T}$, and a subset $F\su T$ of size $|F|\le 2$ so that $(T+e)\setminus F$ is a rainbow independent set. We then replace $T\in \mathcal{T}$ by $(T+e)\setminus F$. To ensure the size of the deadlock $D(U)$ decreases, we make sure that $F$ is disjoint from the span of the slightly larger deadlock $D'(U)$. \smallskip
    \newline 
    \textbf{b)} Where a colour $c$ appears too often in $U$, we look for an element $e\in U$ with colour $c$, a set $T\in \mathcal{T}$ and $f\in T$ so that $c$ does not appear on $T$ and $T+e-f$ is an independent set. We then replace $T\in \mathcal{T}$ by $T+e-f$. To maintain the property that $D(U)=\emptyset$, we ensure that $f\notin \spn(D'(U))$. We also make sure that $c(f)$ does not appear that often on $U$.
    }\label{fig:alterationsforcovering}
\end{figure}

To ensure this, instead of two deadlocks, we will consider a sequence of deadlocks with gradually decreasing parameters. These deadlocks will be nested, and increasing in size, and thus we will be able to choose a consecutive pair of deadlocks in this sequence where the rank does not grow too much. We can then decrease the size of the smaller of these two deadlocks via a switching operation as described above, replacing some $T\in \mathcal{T}$ by $(T+e)\setminus F$ for some element $e\in U$ (and some set $F\su T$ of size $|F|\le 2$). Repeating this, we will ultimately be able to decrease the size of the $\lfloor \lambda n\rfloor$-deadlock $D(U)$, and with even more repetitions we obtain $D(U)=\emptyset$.

For clarity, and for a better organisation of the proof, we do not write the proof via a sequence of many such switching steps. Instead, we choose the initial family $\mathcal{T}$ of rainbow independent sets via a certain minimisation, lexicographically minimising the sizes of the relevant sequence of deadlocks. This minimisation will also be chosen in such a way that $E(\mathcal{T})$ is automatically inclusion-wise maximal. 
Hence, Lemma~\ref{lem:inclusion-maximal-is-large} implies that the set $U$ of uncovered elements has size $|U|\leq \nu n^2$. This is crucial for our arguments in order to find suitable elements $e\in U$, $T\in \mathcal{T}$ and $F\su T$ for our desired switching operation replacing $T$ by $(T+e)\setminus F$. Note that this operation can actually increase the size of $U$ (namely, it replaces $U$ by $(U-e)\cup F$ for some set $F$ of size $|F|\le 2$), so during a long sequence of such switching steps the size of $U$ could a priori grow by a lot. This is why it is very important for us that the conclusion of Lemma~\ref{lem:inclusion-maximal-is-large} holds for \emph{every} family $\mathcal{T}$ such that 
$E(\mathcal{T})$ is inclusion-wise maximal. It would not suffice to just find \emph{some} initial family $\mathcal{T}$ such that $|U|\leq \nu n^2$.

Having found a family $\mathcal{T}$ of $\lfloor(1+\lambda)n\rfloor$ disjoint rainbow independent sets with $D(U)=\emptyset$ for the set $U$ of uncovered elements (and having completed a lot of background work to achieve this), the remaining task is to modify the family $\mathcal{T}$ such that $U$ does not contain too many elements of any one colour. Suppose we have a colour $c$ that appears at least $\lambda n$ times on $U$. Picking a set $T\in \mathcal{T}$ without an element of colour $c$, we would like to find an element $e\in U$ of colour $c$ and an element $f\in T$ such that $T+e-f$ is an independent set (it will automatically be rainbow by the choice of $T$). Our goal is then to replace $T$ by $T+e-f$ (see part b) of Figure~\ref{fig:alterationsforcovering}), because then the set $U$ of uncovered elements gets replaced by $U-e+f$, which reduces the number of times that colour $c$ appears in $U$. However, in order to maintain the condition $D(U)=\emptyset$, for some deadlock $D'(U)$ with a slightly different parameter, we need to ensure that $f\notin \spn(D'(U))$ (similarly to the discussion above). We also want to make sure that the colour of $f$ does not appear too often in $U$ (because the number of appearances of that colour increases when replacing $U$ by $U-e+f$). Since the rank of $D'(U)$ will be fairly small, and only few colours appear very often on $U$ (both of these properties are due to the fact that again $|U|$ will be small by Lemma~\ref{lem:inclusion-maximal-is-large}), and the elements in $U$ of colour $c$ have rank at least $\lambda n$, it will be possible to choose elements $e$ and $f$ with the desired properties. Then, upon replacing $T$ by $T+e-f$ we can reduce the number of times that colour $c$ appears in $U$ (while maintaining $D(U)=\emptyset$ and not increasing the number of appearances of any other colour beyond $\lambda n$).

Again, for clarity and a better proof organisation, we actually choose the family $\mathcal{T}$ via a certain minimisation (this time minimising $\sum_{c=1}^{n} |U\cap B_c|^2$), subject to the constraint that $D(U)=\emptyset$. We then show that this family $\mathcal{T}$ must have the property that every colour appears at most $\lambda n$ times in $U$, because otherwise we could improve $\mathcal{T}$ when replacing $T$ by $T+e-f$ as above.

All in all, this yields the desired family $\mathcal{T}$ of $\lfloor(1+\lambda)n\rfloor$ disjoint rainbow independent sets, such for the set $U$ of uncovered elements the $\lfloor \lambda n\rfloor$-deadlock is empty, and such that $U$ contains every colour at most $\lambda n$ times. As discussed above, this is sufficient to prove Theorem~\ref{thm:main-covering}.

The detailed proof of Theorem~\ref{thm:main-covering}, as well as our definition of deadlocks and some lemmas about them, can be found in Section~\ref{sec:proof-main-covering}.

\subsection{Packing theorem}\label{sec-outline-packing}

As discussed in the previous section, in the setting of our packing theorem (Theorem~\ref{thm:main-packing}), we can again consider a coloured matroid of rank $n$, where the colour classes are bases $B_1,\dots,B_n$. This time, we need to show that we can we can find at least $(1-\eps) n$ disjoint rainbow bases in $B_1\cup\dots\cup B_n$. Our basic strategy to prove Theorem~\ref{thm:main-packing} is first to reserve a random subset $R\su B_1\cup \dots\cup B_n$ (the `reservoir'), then to form a family $\mathcal{T}$ of $\lceil (1-\eps)n\rceil$ disjoint rainbow independent sets of large total size in the complement of $R$, and finally to turn $\mathcal{T}$ into a family of disjoint rainbow bases with a procedure using the elements in $R$.

This strategy relies on two key lemmas, stated below.
The first of these lemmas is somewhat similar to Lemma~\ref{lem:inclusion-maximal-is-large} and Pokrovskiy's result \cite{pokrovskiy2020rota} stated as Theorem~\ref{thm:pokrovskiy}.
 It gives a family $\mathcal{T}$ of $\lceil (1-\eps)n\rceil$ disjoint rainbow independent sets of large total size, but with the very important constraint that $\mathcal{T}$ avoids our reservoir $R\su B_1\cup \dots\cup B_n$ of randomly chosen elements. 

\begin{lemma}\label{keylem:almostalmost-new}
For any $0<\eps<\eta<1$ and $\nu>0$ the following holds for any sufficiently large $n$ (sufficiently large with respect to $\eps$, $\eta$ and $\nu$) and any coloured rank-$n$ matroid with colour classes $B_1,\dots,B_n$, such that each of the sets $B_i$ for $i=1,\dots,n$ is a basis. Let $R\su B_1\cup \dots\cup B_n$
be a set of elements drawn independently at random with probability $\eta$. Then, with high probability (more precisely, with probability tending to $1$ as $n\to \infty$ with $\eps$, $\eta$ and $\nu$ fixed), there is a family $\mathcal{T}$ of $\lfloor (1-\eps)n\rfloor$ disjoint rainbow independent sets in $(B_1\cup \dots\cup B_n)\setminus R$ such that $|E(\mathcal{T})|\geq (1-\eta-\nu)n^2$.
\end{lemma}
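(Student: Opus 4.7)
The plan is to adapt the inclusion-wise-maximality strategy underlying Lemma~\ref{lem:inclusion-maximal-is-large} to the restricted setting where the reservoir $R$ is forbidden. First I would let $\mathcal{T}$ be a family of $\lfloor(1-\eps)n\rfloor$ disjoint rainbow independent sets in $(B_1\cup\dots\cup B_n)\setminus R$ that maximises $|E(\mathcal{T})|$. Then $E(\mathcal{T})$ is inclusion-wise maximal among such families: for every $x$ in the uncovered set $U:=(B_1\cup\dots\cup B_n)\setminus(E(\mathcal{T})\cup R)$ and every $T\in\mathcal{T}$, we have $c(x)\in c(T)$ or $x\in\spn(T)$; equivalently, for each colour $c\notin c(T)$, the entire set $U\cap B_c$ is contained in $\spn(T)$. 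Assuming for contradiction that $|E(\mathcal{T})|<(1-\eta-\nu)n^2$, and using Chernoff bounds to get $|R|=(\eta\pm o(1))n^2$ and $|R\cap B_c|=(\eta\pm o(1))n$ for every $c$ with high probability, we would conclude $|U|\ge(\nu/2)n^2$.

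The core argument then runs the exchange-based strategy from the proof of Lemma~\ref{lem:inclusion-maximal-is-large}, using Lemmas~\ref{lem:injecttoBi} and~\ref{lem:injecttST}, to identify a short augmenting configuration---a sequence of swaps between the sets of $\mathcal{T}$---that strictly increases $|E(\mathcal{T})|$ while remaining inside $(B_1\cup\dots\cup B_n)\setminus R$, contradicting the maximality of $\mathcal{T}$. The role of the randomness of $R$ is to supply, at each step of the exchange argument, a reservoir-free element with prescribed colour and rank properties. To make this precise, in a preliminary step I would establish a \emph{well-spreadness} property: for every flat $F$ in a suitably chosen polynomial-size family of ``relevant'' flats of $\mathcal{M}$, $|R\cap F\cap (B_1\cup\dots\cup B_n)|$ is tightly concentrated around $\eta|F\cap(B_1\cup\dots\cup B_n)|$. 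The relevant flats are chosen so as to include every flat that can arise during the exchange argument (roughly, spans of subsets of the $T\in\mathcal{T}$ and their small modifications), and a union bound gives well-spreadness with high probability.

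The main obstacle, in contrast to Lemma~\ref{lem:inclusion-maximal-is-large}, is that we have only $\lfloor(1-\eps)n\rfloor$ sets instead of $\lfloor(1+\lambda)n\rfloor$, so the combinatorial slack that drove the augmenting argument there must now come from the reservoir. The exchange argument must be restructured so that the role previously played by the $(\lambda+\eps)n$ ``extra'' sets is instead played by reservoir-free elements located via well-spreadness: whenever the Lemma~\ref{lem:inclusion-maximal-is-large}-style argument would parachute in an extra set, we will instead borrow an element from the non-reservoir whose existence is guaranteed by the spreadness of $R$ on the appropriate flat. A second technical difficulty is that the optimiser $\mathcal{T}$ depends on $R$, so the family of relevant flats must be specified before $R$ is exposed, yet be rich enough to contain every flat arising in any possible realisation of the exchange argument; pinning down the right pre-specified family of flats and carrying through the probabilistic bookkeeping is where I expect most of the effort to go.
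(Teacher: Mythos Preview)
Your skeleton is right: maximise $|E(\mathcal{T})|$ among families of $\lfloor(1-\eps)n\rfloor$ disjoint rainbow independent sets avoiding $R$, assume $|E(\mathcal{T})|<(1-\eta-\nu)n^2$, deduce that the uncovered set outside $R$ is large, and derive a contradiction via an augmenting argument. But two points in your plan are off.

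First, your main worry---that the optimiser $\mathcal{T}$ depends on $R$, so the ``relevant flats'' must be pre-specified as a polynomial-size family---is a red herring. The paper sidesteps this entirely: rather than tracking flats arising from a particular $\mathcal{T}$, it establishes a property of $R$ that quantifies over \emph{all} independent sets $T$ and \emph{all} colour subsets $C\subseteq[n]$ simultaneously. There are at most $n^{2n}\cdot 2^n$ such pairs, but the failure probability for each is $e^{-\Omega(n^2)}$ by a Chernoff bound, so the union bound goes through with room to spare. Concretely, the property (property~\ref{property:star} in the paper) says that for every independent $T$ and every $C\subseteq[n]$ there are at least $(1-\eta)(n-|T|)|C|-\gamma n^2$ elements $e\in\bigcup_{c\in C}B_c\setminus R$ with $T+e$ independent. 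Once this holds deterministically, you are free to choose $\mathcal{T}$ depending on $R$ without any measurability issues.

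Second, the ``exchange-based strategy from the proof of Lemma~\ref{lem:inclusion-maximal-is-large}'' is not really built on Lemmas~\ref{lem:injecttoBi} and~\ref{lem:injecttST}; those lemmas are used in the packing argument in Section~\ref{sec:finish-packing}, not here. The engine in the proof of Lemma~\ref{lem:inclusion-maximal-is-large} is the $\ell$-reduction cascade (Lemma~\ref{lem:switching-reduction}) together with the trichotomy Lemma~\ref{lem:threeoutcomes}, driven by the colour-availability graph and Corollary~\ref{cor:findinggoodedges}. What you need is a reservoir-aware version of Lemma~\ref{lem:threeoutcomes} (this is Lemma~\ref{lem:threeoutcomes-reservoir} in the paper), where the counting of elements $e$ with $T+e$ rainbow independent is restricted to $B_c\setminus(E(\mathcal{T})\cup R)$ and the lower bound on such elements comes precisely from property~\ref{property:star}. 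With that in hand, the proof proceeds exactly as for Lemma~\ref{lem:inclusion-maximal-is-large}: iterate $\ell$-reductions $\mathcal{T}^{(0)},\dots,\mathcal{T}^{(r)}$, rule out outcomes \ref{res-outcome:1} and \ref{res-outcome:2} using maximality of $|E(\mathcal{T})|$, and conclude that $|E(\mathcal{T}^{(r)})|<0$. No bespoke augmenting configurations are needed beyond what Lemma~\ref{lem:switching-reduction} already provides.
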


The proof of Lemma~\ref{keylem:almostalmost-new} can be found in Section~\ref{sec:proof-almostalmost-new}. It is similar to the proof of Lemma~\ref{lem:inclusion-maximal-is-large} in Section~\ref{sect-first-proof} and uses some of the results from that section. However, in the setting of Lemma~\ref{keylem:almostalmost-new}, we need to be more careful as we must avoid using elements from the set $R$. Moreover, it is crucial that in Lemma~\ref{keylem:almostalmost-new} we find a family consisting of only $\lfloor (1-\eps)n\rfloor$ rainbow independent sets, as opposed to the $\lfloor (1+\lambda)n\rfloor$ rainbow independent sets in Lemma~\ref{lem:inclusion-maximal-is-large}. Indeed, the fact that the family consist of fewer sets means that the average size of these sets is larger, and so fewer alterations are needed in order to turn these sets into rainbow bases.

Our second key lemma for the proof of Theorem~\ref{thm:main-packing}, stated next, is used in order to turn the family $\mathcal{T}$ obtained from the previous lemma into a family of disjoint rainbow bases. Roughly speaking, this lemma states that for any family $\mathcal{S}$ of $\lceil (1-\eps)n\rceil$ disjoint rainbow independent sets of total size $|E(\mathcal{S})|< \lceil (1-\eps)n\rceil\cdot n$ which does not contain too many elements from $R$, we can find a family $\mathcal{S}'$ of $\lceil (1-\eps)n\rceil$ disjoint rainbow independent sets of strictly larger total size which contains not too many more elements from $R$. Applying this lemma repeatedly, we can keep increasing the total size of the family until we arrive at such a family of total size $\lceil (1-\eps)n\rceil\cdot n$, i.e., a family of $\lceil (1-\eps)n\rceil$ disjoint  rainbow bases. This proves Theorem~\ref{thm:main-packing} (the details of the deduction of Theorem~\ref{thm:main-packing} from the two key lemmas stated here can be found at the start of Section~\ref{sec:finish-packing}).

\begin{lemma}\label{keylem:completingbases-new}
For any $0<\eps<1/10$, there exist $\sigma=\sigma(\eps)>0$ and $L=L(\eps)>0$, such that for any $\eta$ with $\eps\le \eta< 1$, any sufficiently large $n$ (sufficiently large with respect to $\eps$ and $\eta$), and any coloured rank-$n$ matroid with colour classes $B_1,\dots,B_n$ with the property that each of the sets $B_i$ for $i=1,\dots,n$ is a basis, the following holds with high probability (more precisely, with probability tending to $1$ as $n\to \infty$ with $\eps$ and $\eta$ fixed) for a random subset $R\su B_1\cup \dots\cup B_n$ obtained by drawing elements independently at random with probability $\eta$.
For any family $\mathcal{S}$ of $\lceil (1-\eps)n\rceil$ rainbow independent sets with $\frac{3}{4}n^2\le |E(\mathcal{S})|< \lceil (1-\eps)n\rceil\cdot n$ and
$|E(\mathcal{S})\cap R|\leq \sigma n^2$, there is a family $\mathcal{S}'$ of $\lceil (1-\eps)n\rceil$ rainbow independent sets such that $|E(\mathcal{S}')|>|E(\mathcal{S})|$ and
\begin{equation}\label{eq:betterthefurtherfromabasis}
|E(\mathcal{S}')\cap R|\leq |E(\mathcal{S})\cap R|+L \cdot \ln\Big(\frac{n^2}{\lceil (1-\eps)n\rceil\cdot n-|E(\mathcal{S})|}\Big).
\end{equation}
\end{lemma}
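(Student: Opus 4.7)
Let $m := \lceil(1-\eps)n\rceil$ and set the deficiency $d := mn-|E(\mathcal{S})| \ge 1$. The plan is to build a single short matroid-intersection-style augmenting path in the exchange graph of $\mathcal{S}$, of length $O(\ln(n^2/d))$, whose associated sequence of swaps produces a family $\mathcal{S}'$ with $|E(\mathcal{S}')|>|E(\mathcal{S})|$. Only the $O(\ln(n^2/d))$ elements that enter $\mathcal{S}'$ along this path can lie in $E(\mathcal{S}')\setminus E(\mathcal{S})$, so the increase in the intersection with $R$ is bounded by the path length. Choosing $L=L(\eps)$ large enough to absorb the hidden constant then yields~\eqref{eq:betterthefurtherfromabasis}.

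First I would identify the starting point. Since $|E(\mathcal{S})|<mn$, some $T^*\in\mathcal{S}$ satisfies $|T^*|<n$, and so some colour $c$ is missing from $T^*$. Because $B_c$ is a basis and $\rk(T^*)<n$, Lemma~\ref{lem:injecttoBi} applied with $S=T^*$ and $B=B_c$ yields an element $e\in B_c\setminus\spn(T^*)$, so that $T^*+e$ is rainbow independent. If any such $e$ lies outside $E(\mathcal{S})$, we are finished immediately by replacing $T^*$ with $T^*+e$. Otherwise every such $e$ lies in some $T(e)\in\mathcal{S}\setminus\{T^*\}$, and I would initiate a layered breadth-first exploration of the exchange graph: layer $0$ consists of these $e$; for each $e$ in layer $i$ sitting in $T=T(e)$, Lemma~\ref{lem:injecttST} supplies many elements $e'\not\in T$ for which $T-e+e'$ is rainbow independent, and every previously-unseen such $e'$ is added to layer $i+1$, together with a swap-pointer $e'\mapsto(e,T)$. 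As soon as some reached element lies outside $E(\mathcal{S})$, the pointers can be traced backwards to produce an augmenting chain whose swaps, performed in reverse, give the desired $\mathcal{S}'$.

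The quantitative heart of the argument is a layer-expansion estimate asserting that, for as long as the frontier remains inside $E(\mathcal{S})$, its size grows by a factor bounded away from $1$ at every step, with the expansion strength driven by the deficiency $d$ through matroid-rank counting in the spirit of the arguments developed in Section~\ref{sect-first-proof} (using that $d$ forces many valid exchanges at every reached $T$). A direct calculation then shows that after at most $k=O(\ln(n^2/d))$ layers the frontier must escape $E(\mathcal{S})$; the augmenting chain so obtained has length at most $k$, switches in at most $k+1$ new elements, and so contributes at most $O(\ln(n^2/d))$ new $R$-elements. The principal obstacles will be (a) proving the layer-expansion estimate rigorously under the extra rainbow colour constraint, which is strictly stronger than matroid independence and may prune many would-be exchanges, so that the refined matroid augmentation arguments from Section~\ref{sect-first-proof} must be adapted; and (b) handling the random reservoir $R$ by a union bound taken \emph{before} $\mathcal{S}$ is revealed: over all rooted exchange-trees of depth $O(\log n)$, with probability $1-o(1)$ no tree is forced to absorb more than $L\cdot\ln(n^2/d)$ new $R$-elements along its chosen path, where the bound $|E(\mathcal{S})\cap R|\le\sigma n^2$ is used to rule out the $R$-heavy configurations in which such absorption cannot be avoided.
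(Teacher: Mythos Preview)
Your proposal has a genuine gap: you treat the random set $R$ purely as an obstacle whose intersection with $\mathcal{S}'$ must be controlled, but in the paper's proof $R$ is primarily a \emph{resource} that makes the expansion work. The paper first shows that, with high probability, $R$ satisfies a deterministic property $(\spadesuit)$ (Lemma~\ref{lem:reservoir-extension-2}): for every independent $T$ and colour set $C$, many elements of $\bigcup_{c\in C}(B_c\cap R)$ extend $T$. Via Lemma~\ref{lem:reservoir-big-span} this yields property $(\blacktriangle)$: $R\cap U$ has rank at least $(1-\eps')n$ when restricted to any colour set of size $\eps' n$. This is then used constructively (Lemma~\ref{lem:many-1-absorbable}) in a \emph{double} switch. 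A first swap $T\to T-\phi^*(x)+x$ with $x\in B_{c^*}\cap U$ frees up only about $\eps' n$ colours (the colours of the image of $\phi^*$); one then pulls an element $q\in R\cap U$ of such a colour and performs a second swap, combined via Lemma~\ref{lem:double-switch}, which now frees up about $(1-\eps')n$ colours. It is this boost from $\eps' n$ to $(1-\eps')n$ that drives the $(1+\eps/4)$-factor growth of $\numberofabsorbable_{\mathcal{T}}(T_1,\dots,T_r,T_{r+1})$ in the cascade; without it, the single-swap exchange graph you describe is essentially the cascade underlying the $(1/2-o(1))n$ result of~\cite{bucic2020halfway}, and there is no reason to expect multiplicative growth of the layers beyond what that argument already gives.

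Your obstacle~(a) --- that the rainbow constraint prunes too many exchanges --- is therefore not a technicality to be adapted from Section~\ref{sect-first-proof} but the crux of the lemma, and the fix is precisely the reservoir $R$. Correspondingly, your obstacle~(b) is set up backwards: the paper does not union-bound over exchange trees (there are far too many, and $\mathcal{S}$ is adversarial), but proves a deterministic statement conditional on $(\spadesuit)$, which holds with high probability by a union bound over the $n^{O(n)}$ pairs $(T,C)$. The hypothesis $|E(\mathcal{S})\cap R|\le\sigma n^2$ is used not to rule out $R$-heavy augmenting paths but to guarantee $|R\cap U|\ge |R|-\gamma n^2$, so that $(\blacktriangle)$ applies and the reservoir is still available for the double switches.
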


The term $\lceil (1-\eps)n\rceil\cdot n-|E(\mathcal{S})|$ in the denominator here counts the number of elements missing in $\mathcal{S}$ compared to a family of $\lceil (1-\eps)n\rceil$ rainbow bases. The proof of Lemma~\ref{keylem:completingbases-new} can be found in Section~\ref{sec:finish-packing}, but we give an outline of the proof here. To prove the lemma we will use some `cascade-like' switching operations. In order for this to work, we may first have to alter the family $\mathcal{S}$ to get a family $\mathcal{T}$ of $\lceil (1-\eps)n\rceil$ rainbow independent sets of the same total size $|E(\mathcal{T})|=|E(\mathcal{S})|$, before starting these operations. For the purposes of this proof outline, we can assume that $\mathcal{T}=\mathcal{S}$, and we comment on the more technical alteration to $\mathcal{S}$ later, just before the proof of Lemma~\ref{keylem:completingbases-new} in Section~\ref{sec:finish-packing}.

Now, we have a family $\mathcal{T}$ of $\lceil (1-\eps)n\rceil$ disjoint rainbow independent sets, not all of which are bases, such that $E(\mathcal{T})$ does not contain too many elements from the random reservoir $R$ that we reserved before. As before, we write $U=(B_1\cup \dots\cup B_n)\setminus E(\mathcal{T})$ for the set of elements not covered by $\mathcal{T}$. Let us choose a set $T_1\in \mathcal{T}$ which is not a basis, and a colour $c$ not appearing on $T$. From the matroid augmentation property applied to $T_1$ and the basis $B_c$ in colour $c$, we know that there is an element $e$ of colour $c$ for which $T_1+e$ is an independent set (and also automatically rainbow). If $e\in U$ then in the family $\mathcal{T}$ we can simply replace $T_1$ by $T_1+e$ in order to obtain the desired family $\mathcal{S}'$ in Lemma~\ref{keylem:completingbases-new}. However, we should anticipate that there is some $T_2\in \mathcal{T}\setminus \{T\}$ with $e\in T_2$. Then we could move $e$ from $S$ to $T$ to increase the size of $T$, but overall we would not change the total size of $\mathcal{T}$. However, moving $e$ from $T_2$ to $T_1$ will mean that $T_2$ gets replaced by $T_2-e$, which is no longer a basis. So we can perhaps find some $e'\neq e$ which we can add to $T_2-e$. We wish to do this iteratively, making some cascade of changes, until the number of elements we can move is so large that one of them must be in $U$, at which point the corresponding cascade of changes will make an improvement to $\mathcal{T}$. However, it turns out that one cannot hope to succeed simply by moving elements between the different sets in $\mathcal{T}$ until one finds an element in $U$ that can be added to a set in $\mathcal{T}$ without violating rainbowness and independence. Instead, some more complicated switching operations are needed at every step, where in addition to moving elements between different sets $T\in \mathcal{T}$, we also modify the sets $T\in \mathcal{T}$ by exchanging certain elements for other elements in $U$. The cascade of switches we use is motivated by the proof of Theorem~\ref{thm:halfway} by Buci{\'c}, Kwan, Pokrovskiy, and Sudakov \cite{bucic2020halfway}, but crucially different in order to obtain our asymptotically-tight improvement.

To sketch our approach, let us focus on just one step of our switching cascade (as shown in Figure~\ref{fig:alterationsforpacking}). That is, suppose we have found some sets $T_1,\dots,T_r\in \mathcal{T}$ (for some small $r$) and some $T\in \mathcal{T}\setminus \{T_1,\dots,T_r\}$ and $e\in T$ such that $e$ can be moved from $T$ to some modified version of $T_1,\dots,T_r$, while maintaining $|E(\mathcal{T})|$. Now, $T-e$ is certainly missing some colour, $c$ say. There are at least $\eps n$ elements $x\in U$ with colour $c$ (recalling that $|\mathcal{T}|=\lceil (1-\eps)n\rceil$, and every set in $\mathcal{T}$ contains at most one element of colour $c$), and only few of them were used in one of the previous modifications to $T_1,\dots,T_r$ to allow $e$ to be incorporated. For each of the not-yet-used elements $x\in U$ of colour $c$, we consider the set $T-e+x$. This set is always rainbow, but might not be independent anymore, so we now choose some $x'\in T-e$ such that $T-e+x-x'$ is independent. Essentially, this means that in the set $T-e$ we exchange the element $x'\in T-e$ against the element $x\in U$. The different choices for $x$ form an independent set, and therefore this will yield many different pairs $(x,x')$ with distinct elements $x'\in T$ (which have many different colours).

For each of these possible pairs $(x,x')$, we now repeat this procedure for $T-e+x-x'$ instead of $T-e$. Namely, the colour of $x'$ is missing on $T-e+x-x'$, so we can find some elements $q\in U$ with $c(q)=c(x')$ and $q'\in T-e$ such that $T-e+x-x'+q-q'$ is a rainbow independent set (i.e., such that we can exchange $q'$ against $q$ in the rainbow independent set $T-e+x-x'$). However, this time, as there are many different colours appearing among the possible elements $x'$ obtained in the previous step, we also have a lot more possibilities for the element $q$. In fact, it will suffice to restrain ourselves to choose $q$ from the set $R\cap U$. Using the randomness of $R$, it is not hard to show that we have a large span of the elements in $R$ whose colours appear on the possible elements $x'$ in the previous step (and in fact, this span is even robustly large when deleting a small number of elements from $R$). By our assumption that $E(\mathcal{T})$ does not contain many elements of $R$, most of the elements of $R$ are in $U$, and so we can show that there are many potential choices for $q\in R\cap U$ and these choices have a large span. This is the crucial part of our argument, and allows us to show that we will have a huge number of possibilities for $q'\in T-e$ (namely, almost all elements of $T-e$ will appear as a possible choice for $q'$ for some appropriately chosen $x$, $x'$, and $q$). Then we also have many different options for finding an element $e'$ such that $T-e+x-x'+q-q'+e'$ is a rainbow independent set. Most of these elements $e'$ will be contained in some set in $\mathcal{T}\setminus \{T_1,\dots,T_r,T\}$. By the pigeonhole principle, we will find some new set $T'\in \mathcal{T}\setminus \{T_1,\dots,T_r,T\}$ containing a relatively large number of such elements $e'$. This means that, starting with just one element $e\in T$ that we moved out of $T$ (incorporating $e$ into $T_1,\dots,T_r$ via some small modifications), we now found a set $T'$ containing  several different possible elements $e'$ that can be incorporated into $T_1,\dots,T_r,T$ via some small modifications. Iterating this, like a cascade, at every step we obtain an increase in the number of possible elements that can be incorporated into the previous sets. This number will keep increasing (in fact, it will increase exponentially), until at some point we must find an element in $U$ that can be incorporated into the family $\mathcal{T}$ (after some small modifications), meaning that we found a family  $\mathcal{S}'$  improving $\mathcal{T}$, as desired in Lemma~\ref{keylem:completingbases-new}.

\begin{figure}[t]
\centering

\begin{tikzpicture}
				\def\spacer{1.75};
				\def\Ahgt{1.3};
				\def\Bhgt{1.3};
				\def\ver{0.5};

        \def\verr{0.5};
        \def\wid{0.75};
				\def\widd{0.75};
        	\def\verrr{0.3};

\coordinate (A) at (0.5*\spacer,0);
\coordinate (B) at (\spacer,0);
\coordinate (Bplus) at (2*\spacer,0);
\coordinate (C) at (3*\spacer,0);
\coordinate (Cplus) at (4*\spacer,0);
\foreach \n in {5,6,7,8}
{
\coordinate (V\n) at (\n*\spacer-\spacer,0);
}

\coordinate (D) at ($0.5*(B)+0.5*(C)+(0,3*\verr)$);
\coordinate (Dpr) at ($0.5*(B)+0.5*(C)+(0,1.7*\verr)$);

\def\rebal{0.1};

\foreach \midd in {A,Bplus,C}
{
\draw [thick,rounded corners] ($(\midd)-(\widd,0)$) -- ++(0,\verr+\rebal)
-- ++(2*\widd,0) -- ++(0,-3*\verr-\rebal) -- ++(-2*\widd,0) -- cycle;
}

\foreach \n in {5,6,7,8}
{
\draw [thick,rounded corners,dashed] ($(V\n)-(\widd,0)$) -- ++(0,\verr+\rebal)
-- ++(2*\widd,0) -- ++(0,-3*\verr-\rebal) -- ++(-2*\widd,0) -- cycle;
}

\def\reducehgt{0.15};
\def\reducehgtt{0.1};

\def\shrinkR{0.05}
%This is R
\draw [thick,rounded corners] ($(D)+(0.5*\spacer,0)-(0,0.05)-(2*\spacer+\widd-\shrinkR,\reducehgt)$) -- ++(0,\verrr)
-- ++(6.5*\spacer+2*\widd-2*\shrinkR,0) -- ++(0,-2*\verrr+2*\shrinkR-0.25+0.05) -- ++(-6.5*\spacer-2*\widd+2*\shrinkR,0) -- cycle;
%This is U
\draw [thick,rounded corners] ($(Dpr)+(0.5*\spacer,0)-(2*\spacer+\widd,-\rebal)$) -- ++(0,3*\verrr-\rebal-\reducehgtt)
-- ++(6.5*\spacer+2*\widd,0) -- ++(0,-4*\verrr+\rebal+\reducehgtt) -- ++(-6.5*\spacer-2*\widd,0) -- cycle;

\draw ($(Dpr)-1.5*(\spacer,0)-(\widd,0)-(0.25,-0.2)$) node {$U$};

\foreach \midd/\lab in {A/T_1,Bplus/T_r}
{
\draw ($(\midd)-(0,0.25)-(0,0.5*\verr)$) node {$\lab$};
}

\draw ($(C)-(-0.1,0.25)-(0,0.5*\verr)$) node {$T$};

\foreach \midd in {C}
{
\node[vertex] (e\midd) at ($(\midd)-(0.75*\widd,0)-(0,0.5*\verr)$) {};
\node[vertex] (fprime\midd) at ($(\midd)+(0.2*\widd,3*\verr)-(0,0.2)$) {};
\node[vertex] (fnew\midd) at ($(fprime\midd)+(0,-2.4*\verr)-0.4*(\widd,0)+(0,0.2)$) {};
\node[vertex] (gnew\midd) at ($(fprime\midd)+(0,-2.4*\verr)+0.4*(\widd,0)+(0,0.2)$) {};

\draw [thick,->] (fnew\midd) --++($(0,0.9*\widd)-(0,0.2)$);
\draw [thick,->] (gnew\midd)  --++($(0,0.9*\widd)-(0,0.2)$);
\draw [thick,->] (fprime\midd) --++($(0,-2.4*\verr)+(0,0.2)$);
}

\foreach \midd in {A,Bplus}
{
\node[vertex,black!60] (fprime\midd) at ($(\midd)+(0.2*\widd,3*\verr)-(0,0.2)$) {};
\node[vertex,black!60] (fnew\midd) at ($(fprime\midd)+(0,-2.4*\verr)-0.4*(\widd,0)+(0,0.2)$) {};
\node[vertex,black!60] (gnew\midd) at ($(fprime\midd)+(0,-2.4*\verr)+0.4*(\widd,0)+(0,0.2)$) {};
\draw [thick,->,black!60] (fnew\midd) --++($(0,0.9*\widd)-(0,0.2)$);
\draw [thick,->,black!60] (gnew\midd) --++($(0,0.9*\widd)-(0,0.2)$);
\draw [thick,->,black!60] (fprime\midd) --++($(0,-2.4*\verr)+(0,0.2)$);
}
\node[vertex,black!60] (eBplus) at ($(Bplus)-(0.75*\widd,0)-(0,0.5*\verr)$) {};
\node[vertex,black!60] (eB) at ($(B)-(0.75*\widd,0)-(0,0.5*\verr)+(0.5*\spacer,0)+(-0.3*\widd,0)$) {};
\foreach \midd in {B,Bplus}
{
\draw [thick,->,black!60] (e\midd) --++(-0.6*\widd,0);
}

\draw [thick,->] (eC) --++(-0.8*\widd,0);
\node[vertex] (eCplus) at ($(Cplus)-(0.75*\widd,0)-(0,0.5*\verr)$) {};
\draw [thick,->] (eCplus) --++(-0.8*\widd,0);

\node[vertex,black!60] (eprimeA) at ($(A)+(-0.6*\widd,1.5*\verr)+(0,0.025)$) {};
\node[vertex,black!60] (eprimeB) at ($(Bplus)+(-0.6*\widd,3*\verr)-(0,0.2)+(0,0.025)$) {};
\node[vertex] (eprimeC) at ($(C)+(-0.6*\widd,1.5*\verr)+(0,0.025)$) {};
\draw [thick,->,black!60] (eprimeB) --++($(0,-2.4*\verr)+(0,0.2)$);
\draw [thick,->,black!60] (eprimeA) --++(0,-0.9*\verr);
\draw [thick,->] (eprimeC) --++(0,-0.9*\verr);

\draw ($(eC)-(0,0.2)$) node {\footnotesize $e$};
\draw ($(fnewC)-(0,0.25)$) node {\footnotesize $x'$};
\draw ($(fprimeC)+(0.2,0)$) node {\footnotesize $q$};
\draw ($(gnewC)-(0,0.25)$) node {\footnotesize $q'$};
\draw ($(eprimeC)-(0.2,0)$) node {\footnotesize $x$};
\draw ($(eCplus)+(0.25,0)$) node {\footnotesize $e'$};

\draw ($0.5*(A)+0.5*(Bplus)-(0,0.5*\verr)+(0.05,0)$) node {\dots};
\draw ($(D)-(1.8,0.2)+(3*\spacer,0)$) node {$R\cap U$};
%\draw ($(Dpr)-(1.8,0.1)$) node {$U$};
\draw[decorate,decoration={brace,raise=4pt},thick]
  (7.5*\spacer,\verr+\rebal) -- (7.5*\spacer,-2*\verr) node[midway,right=6pt]{$\mathcal{T}$};
				\end{tikzpicture}

    \caption{Given an element $e\in T$ which is $(T_1,\dots,T_r)$-absorbable, we find an element $e'$ which is $(T_1,\dots,T_r,T)$-absorbable (in fact, we find many such elements). The implicit cascade structure found by our proof is depicted in grey but the critical point is that a small number of alterations to $T_1,\dots,T_r$ can be made using elements from $U$ to allow $e$ to be incorporated. To incorporate $e'$ into $T-e$, we first swap certain elements $x,q\in U$ against elements $x',q'\in T-e$ so that in the end $T-e+x-x'+q-q'+e'$ is a rainbow independent set. 
}\label{fig:alterationsforpacking}\end{figure}
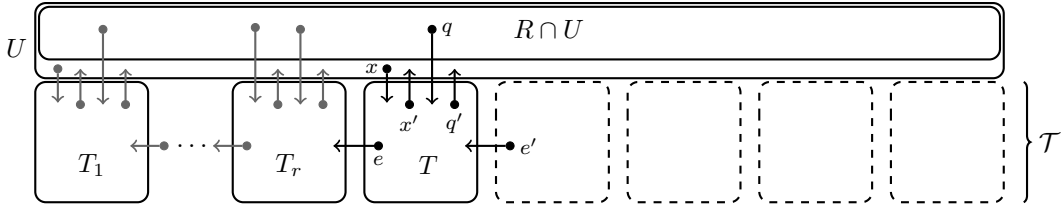

To avoid cumbersome notation, we do not wish to remember all of the individual switching steps that are used in this (potentially rather long) cascade. Instead, we will say that an element $e$ is \emph{$(T_1,\dots,T_r)$-absorbable} if we can make some small alterations to $T_1,\dots,T_r$ in order to incorporate $e$, and then we just count the number of $(T_1,\dots,T_r)$-absorbable elements at every step. Slightly more formally (see Definition~\ref{def:absorbable} for the precise definition), $e$ is $(T_1,\dots,T_r)$-absorbable if there are rainbow independent sets $T_1',\dots,T_r'$ that can be obtained from $T_1,\dots,T_r$ via small alterations (where $T_1',\dots,T_r'$ need to be disjoint from all sets in $\mathcal{T}\setminus \{T_1,\dots,T_r\}$ apart from potentially containing the element $e$) such that in total the size $|T_1'|+\dots+|T_r'|$ increases compared to $|T_1|+\dots+|T_r|$. While, formally, we do not demand that $e$ is contained in one of the sets $T_1',\dots,T_r'$, this will automatically follow unless we found an element in $U$ to incorporate into $T_1,\dots,T_r$ (after some small modifications) to increase their size (in which case we have already found the desired family $\mathcal{S}'$ improving $\mathcal{T}$). At every step, it is essential that we only make `small' alterations, because we need to iterate this argument, and because we need to bound the number of additional elements in $R$ used by the family $\mathcal{S}'$ compared to the initial families $\mathcal{T}$ and $\mathcal{S}$ in order to satisfy \eqref{eq:betterthefurtherfromabasis}. For this reasons, it is also important to bound the total number of steps in the above cascade. In fact, the further away the initial families $\mathcal{S}$ and $\mathcal{T}$ are from a family of $\lceil (1-\eps)n\rceil$ disjoint rainbow bases, the fewer steps we need in the cascade (and therefore fewer additional elements from $R$ are needed). In the proof sketch above, this is because we will be able to start the cascade with some set $T_1\in \mathcal{T}$ which is further away from a basis, so that already at the next step there will be more possibilities for elements to incorporate into $T_1$ (after the usual small modifications) and thus will need to iterate our argument less.

\section{Inclusion-wise maximal families are large: proof of Lemma~\ref{lem:inclusion-maximal-is-large}}
\label{sect-first-proof}

Given a family $\mathcal{T}$ of disjoint rainbow independent sets in a matroid coloured with colours $1,\dots,n$, the \emph{colour-availability graph $\mathcal{A}(\mathcal{T})$} is the bipartite graph with vertex classes $\mathcal{T}$ and $[n]$ and edges $(T,c)$ with $T\in \mathcal{T}$ and $c\in [n]$ exactly when $c$ is a colour that does not appear on $T$. In other words, the colour-availability graph is the auxiliary graph recording which colours are missing, i.e., still available, on which of the rainbow independent sets in $\mathcal{T}$.
Note that for any $T\in \mathcal{T}$ we have $\mathrm{deg}_{\mathcal{A}(\mathcal{T})}(T)=n-|T|$ and for any colour $c\in [n]$, if $B_c$ is the set of elements of colour $c$, then we have $\mathrm{deg}_{\mathcal{A}(\mathcal{T})}(c)=|\mathcal{T}|-|E(\mathcal{T})\cap B_c|\ge |\mathcal{T}|-|B_c|$.

For our arguments, we need to consider edges $(T,c)$ in the colour-availability graph $\mathcal{A}(\mathcal{T})$ of a family $\mathcal{T}$ of disjoint rainbow independent sets such that $\mathrm{deg}_{\mathcal{A}(\mathcal{T})}(c)$ can be bounded in  terms of $\mathrm{deg}_{\mathcal{A}(\mathcal{T})}(T)$. The following graph-theoretic lemma enables us to find many such edges.

\begin{lemma}\label{lem:findinggoodedges}
Let $0<\alpha<\beta$ and $\delta>0$. Let $G$ be a bipartite graph with vertex classes $X$ and $Y$ and edge set $E(G)\su X\times Y$. Assume that $|X|\leq \alpha \cdot |Y|$ and that every vertex $y\in Y$ has degree $\mathrm{deg}_G(y)\geq \delta \cdot |X|$.
Then, there are at least $(\delta (\beta-\alpha)/\beta) \cdot |X|\cdot |Y|$ edges $(x,y)\in E(G)$ with $\mathrm{deg}_G(y)\leq \beta \cdot \mathrm{deg}_G(x)$.
\end{lemma}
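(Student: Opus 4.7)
My plan is a short weighted double-counting argument built on two immediate identities. Summing over all edges $(x,y) \in E(G)$ and grouping by either endpoint gives
\[
\sum_{(x,y) \in E(G)} \frac{1}{\deg_G(x)} \;\leq\; |X|, \qquad \sum_{(x,y) \in E(G)} \frac{1}{\deg_G(y)} \;=\; |Y|,
\]
the second identity using that $\deg_G(y) \geq \delta|X| > 0$ for every $y \in Y$. Write $F$ for the set of good edges (those with $\deg_G(y) \leq \beta \deg_G(x)$), whose cardinality we want to bound from below, and $B = E(G) \setminus F$ for the bad edges, which satisfy $\deg_G(x) < \deg_G(y)/\beta$, equivalently $1/\deg_G(x) > \beta/\deg_G(y)$.

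The first key step is to combine this bad-edge inequality with the first identity to obtain
\[
|X| \;\geq\; \sum_{(x,y) \in B} \frac{1}{\deg_G(x)} \;>\; \beta \sum_{(x,y) \in B} \frac{1}{\deg_G(y)},
\]
so $\sum_{(x,y) \in B} 1/\deg_G(y) < |X|/\beta \leq \alpha|Y|/\beta$ (using $|X| \leq \alpha|Y|$). Subtracting from the second identity yields
\[
\sum_{(x,y) \in F} \frac{1}{\deg_G(y)} \;>\; |Y| - \frac{\alpha|Y|}{\beta} \;=\; |Y| \cdot \frac{\beta - \alpha}{\beta}.
\]
The second key step converts this into a lower bound on $|F|$: since $\deg_G(y) \geq \delta|X|$ for every $y$, each summand on the left is at most $1/(\delta|X|)$, so the left-hand side is at most $|F|/(\delta|X|)$. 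Rearranging gives the desired bound $|F| > (\delta(\beta - \alpha)/\beta) \cdot |X| \cdot |Y|$.

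I do not foresee a genuine obstacle: once the reciprocal-degree weighting is chosen, the rest is routine algebra. The only conceptual step is picking those weights in the first place. Natural alternatives, such as weighting each edge by $\deg_G(x)$ and applying Cauchy--Schwarz in the form $\sum_x \deg_G(x)^2 \geq |E(G)|^2/|X|$, only give a weaker bound of the shape $((\delta\beta - \alpha)/\beta) \cdot |X| \cdot |Y|$, which can even fail to be positive when $\delta$ is small. Weighting by $1/\deg_G(y)$ on the good side is what finally lets the hypothesis $\deg_G(y) \geq \delta|X|$ do its work, by turning it into the uniform upper bound $1/\deg_G(y) \leq 1/(\delta|X|)$.
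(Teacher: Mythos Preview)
Your proof is correct and is essentially the same weighted double-counting argument as the paper's: both assign weight $1/\deg_G(y)$ to each edge, use that these weights sum to $|Y|$, bound the bad-edge contribution via $\sum_{(x,y)} 1/\deg_G(x)\le |X|\le \alpha|Y|$ and the good-edge contribution via $1/\deg_G(y)\le 1/(\delta|X|)$, and rearrange. The only cosmetic difference is that you isolate the bad-edge sum first and subtract, whereas the paper bounds the full sum in one line; note also that your strict inequality degenerates to equality when $B=\emptyset$, but the lemma only requires the non-strict bound, so this is harmless.
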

\begin{proof}
Assign each edge $(x,y)\in E(G)$ with $x\in X$ and $y\in Y$ the weight $w(x,y)=1/\mathrm{deg}_G(y)$, and call the edge $(x,y)$ \emph{good} if $\mathrm{deg}_G(y)\leq \beta \cdot \mathrm{deg}_G(x)$, and \emph{bad} otherwise. We count the weight on the edges from each side of the partition. Firstly,
\begin{equation}\label{eq:weight-sum-graph}
\sum_{(x,y)\in E(G)}w(x,y)=\sum_{y\in Y}\sum_{x\in N_G(y)}\frac{1}{\mathrm{deg}_G(y)}=\sum_{y\in Y}1=|Y|.
\end{equation}
Secondly, letting $e_{\mathrm{gd}}(G)$ be the number of good edges in $G$, we have
\begin{align*}
\sum_{(x,y)\in E(G)}w(x,y)&= \sum_{\substack{(x,y)\in E(G)\\xy\text{ good}}}\frac{1}{\mathrm{deg}_G(y)}
+\sum_{\substack{(x,y)\in E(G)\\xy\text{ bad}}}\frac{1}{\mathrm{deg}_G(y)}\\
&\leq \sum_{\substack{(x,y)\in E(G)\\xy\text{ good}}}\frac{1}{\delta |X|}
+\sum_{\substack{(x,y)\in E(G)\\xy\text{ bad}}}\frac{1}{\beta \cdot \mathrm{deg}_G(x)}\\
& \leq \frac{e_{\mathrm{gd}}(G)}{\delta  |X|}
+\sum_{x\in X}\sum_{y\in N_G(x)}\frac{1}{\beta \cdot \mathrm{deg}_G(x)}= \frac{e_{\mathrm{gd}}(G)}{\delta  |X|}
+\frac{|X|}{\beta}
\leq \frac{e_{\mathrm{gd}}(G)}{\delta  |X|}
+\frac{\alpha |Y|}{\beta}.
\end{align*}
Combining this with \eqref{eq:weight-sum-graph}, and rearranging, we indeed obtain
\[e_{\mathrm{gd}}(G)\geq \delta  |X|\cdot \Big(1-\frac{\alpha}{\beta}\Big)\cdot |Y|=\frac{\delta (\beta-\alpha)}{\beta}\cdot |X|\cdot |Y|.\qedhere\]
\end{proof}

Instead of using Lemma~\ref{lem:findinggoodedges} directly, we will use the following corollary. The set $E$ in this corollary will simply be obtained by taking the set of edges satisfying the property in Lemma~\ref{lem:findinggoodedges}.

\begin{corollary}\label{cor:findinggoodedges} For any $\alpha,\mu,\delta,\sigma,\lambda>0$ with $\sigma+\lambda\ge \alpha>\sigma$, there is some $\rho>0$ such that the following holds. Let $G$ be a bipartite graph with vertex classes $X$ and $Y$ and edge set $E(G)\su X\times Y$, such that $|X|\leq \alpha \cdot |Y|$. Suppose that $\mathrm{deg}_G(y)\geq \delta \cdot |X|$ for every vertex $y\in Y$ and $\mathrm{deg}_G(x)\leq (1-\mu)\cdot |Y|$ for every vertex $x\in X$. Then, there is a set $E\su E(G)$ such that
\[\sigma\cdot \sum_{(x,y)\in E}\mathrm{deg}_G(x)\geq \rho \cdot |X|\cdot |Y|^2+\sum_{(x,y)\in E}\Big(\mathrm{deg}_G(y)-\lambda \cdot |Y|\Big).\]
\end{corollary}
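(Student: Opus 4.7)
The plan is to reduce to Lemma~\ref{lem:findinggoodedges} with a carefully chosen parameter $\beta$, and take $E$ to be the set of ``good'' edges it produces, namely those $(x,y)$ with $\mathrm{deg}_G(y)\le\beta\cdot\mathrm{deg}_G(x)$. Rewriting the target inequality as
\[
\sum_{(x,y)\in E}\bigl(\sigma\cdot\mathrm{deg}_G(x)\,-\,\mathrm{deg}_G(y)\,+\,\lambda|Y|\bigr) \;\ge\; \rho\cdot|X|\cdot|Y|^2,
\]
each edge of $E$ contributes at least $(\sigma-\beta)\mathrm{deg}_G(x)+\lambda|Y|$. Since $\sigma-\beta$ will be negative, this quantity is minimised when $\mathrm{deg}_G(x)$ is as large as possible, so the hypothesis $\mathrm{deg}_G(x)\le(1-\mu)|Y|$ yields a per-edge lower bound of $\bigl((\sigma-\beta)(1-\mu)+\lambda\bigr)|Y|$.

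The value of $\beta$ must therefore meet two competing requirements: $\beta>\alpha$, so that Lemma~\ref{lem:findinggoodedges} produces a positive-density set of good edges, and $\beta<\sigma+\lambda/(1-\mu)$, so that the per-edge bound above is strictly positive. These can be reconciled precisely because the hypothesis $\sigma+\lambda\ge\alpha$ combined with $\mu>0$ gives
\[
\sigma+\frac{\lambda}{1-\mu} \;>\; \sigma+\lambda \;\ge\; \alpha,
\]
leaving a non-empty window. I would pick $\beta$ at, say, the midpoint of this window (or any fixed value inside, depending only on $\alpha,\sigma,\lambda,\mu$).

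With such a $\beta$, Lemma~\ref{lem:findinggoodedges} supplies a set $E$ of at least $\delta(\beta-\alpha)|X||Y|/\beta$ good edges, and summing the per-edge bound over $E$ yields the claimed inequality with
\[
\rho \;=\; \frac{\delta(\beta-\alpha)}{\beta}\cdot\bigl((\sigma-\beta)(1-\mu)+\lambda\bigr) \;>\; 0,
\]
a constant depending only on $\alpha,\mu,\delta,\sigma,\lambda$. The only real obstacle is the bookkeeping of the parameter window for $\beta$: the role of the degree cap $\mathrm{deg}_G(x)\le(1-\mu)|Y|$ is precisely to rescue the borderline situation where $\sigma+\lambda=\alpha$, in which a direct application of Lemma~\ref{lem:findinggoodedges} with $\beta=\alpha$ would degenerate and give no useful output.
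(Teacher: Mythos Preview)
Your proposal is correct and follows essentially the same approach as the paper: choose $\beta$ with $\alpha<\beta<\sigma+\lambda/(1-\mu)$ (equivalently, $(\beta-\sigma)(1-\mu)<\lambda$), take $E$ to be the good edges from Lemma~\ref{lem:findinggoodedges}, and bound each summand using $\mathrm{deg}_G(y)\le\beta\,\mathrm{deg}_G(x)$ followed by $\mathrm{deg}_G(x)\le(1-\mu)|Y|$. The resulting value of $\rho$ matches the paper's exactly, and your observation that the hypothesis $\mu>0$ is what opens the window for $\beta$ in the borderline case $\sigma+\lambda=\alpha$ is spot on.
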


\begin{proof} Note that $(\alpha-\sigma)(1-\mu)<\alpha-\sigma\le \lambda$, so we can choose $\beta>\alpha$ such that $(\beta-\sigma)(1-\mu)<\lambda$. Now, let us define $\rho=(\delta (\beta-\alpha)/\beta)\cdot (\lambda-(\beta-\sigma)(1-\mu))>0$. By Lemma~\ref{lem:findinggoodedges}, we can find a set $E\su  E(G)$ of size $|E|\geq (\delta (\beta-\alpha)/\beta) \cdot |X|\cdot |Y|$ such that $\mathrm{deg}_G(y)\leq \beta \cdot \mathrm{deg}_G(x)$ for each $(x,y)\in E$. Now, noting that $\beta>\alpha>\sigma$, and thus $(\beta-\sigma)\mathrm{deg}_G(x)\leq (\beta-\sigma)\cdot (1-\mu)|Y|$ for each $x\in X$, we have
\begin{align*}
\sigma\cdot \!\!\sum_{(x,y)\in E}\!\!\mathrm{deg}_G(x)-\!\!\sum_{(x,y)\in E}\!\!\Big(\mathrm{deg}_G(y)-\lambda \cdot |Y|\Big)&= \sum_{(x,y)\in E}\Big(\sigma \cdot \mathrm{deg}_G(x)-\mathrm{deg}_G(y)+\lambda \cdot |Y|\Big)\\
&\ge \sum_{(x,y)\in E}\Big(\sigma \cdot \mathrm{deg}_G(x)-\beta \cdot \mathrm{deg}_G(x)+\lambda \cdot |Y|\Big)\\
&\ge \sum_{(x,y)\in E}\Big(\lambda \cdot |Y|-(\beta-\sigma) \cdot (1-\mu)\cdot |Y|\Big)\\
&= |E|\cdot (\lambda-(\beta-\sigma)(1-\mu))\cdot |Y|\\
&\ge \frac{\delta (\beta-\alpha)}{\beta}(\lambda-(\beta-\sigma)(1-\mu))\cdot |X|\cdot |Y|^2=\rho \cdot |X|\cdot |Y|^2.
\end{align*}
Rearranging yields the desired inequality.
\end{proof}

The following definition, due to Pokrovskiy~\cite{pokrovskiy2020rota} will also play an important role in our proofs.

\begin{defn}\label{def:reduction} Given a family $\mathcal{T}$ of disjoint rainbow independent sets in a coloured matroid, and a positive integer $\ell$, the \emph{$\ell$-reduction} $\mathcal{T}'$ of $\mathcal{T}$ is defined as follows. We consider all elements $x\in E(\mathcal{T})$ with the property that there exist $\ell$ different sets $T\in \mathcal{T}$ such that $T+x$ is a rainbow independent set. Now, let $\mathcal{T}'$ be the family of disjoint rainbow independent sets  obtained from $\mathcal{T}$ by deleting all elements $x\in E(\mathcal{T})$ with this property from their respective rainbow independent sets in $\mathcal{T}$.
\end{defn}

Recall that $T+x$ is to be interpreted as a multi-set here. Hence, $T+x$ being a rainbow independent set in particular means $x\not\in T$.
Intuitively, for every element $e\in E(\mathcal{T})$ with the property in the definition above, there is plenty of flexibility to move $e$ to different member of $\mathcal{T}$ without violating rainbowness or independence. This intuition is formalised by the following lemma, also due to Pokrovskiy~\cite{pokrovskiy2020rota} (this lemma is essentially a special case of \cite[Lemma 11]{pokrovskiy2020rota}). For the reader's convenience we include a simple proof of this lemma here.

\begin{lemma}\label{lem:switching-reduction} Fix integers $r\ge 0$ and $\ell>3^r$. Consider a family $\mathcal{T}=\{T_1,\dots,T_m\}$ of disjoint rainbow independent sets in a coloured matroid, and define families $\mathcal{T}^{(0)},\dots,\mathcal{T}^{(r)}$ of disjoint rainbow independent sets by setting $\mathcal{T}^{(0)}=\mathcal{T}$ and taking $\mathcal{T}^{(i)}$ to be the $\ell$-reduction of the family $\mathcal{T}^{(i-1)}$ for $i=1,\dots,r$. Suppose that there is an element $e\not\in E(\mathcal{T})$ of the matroid such that $T+e$ is a rainbow independent set for some $T\in \mathcal{T}^{(i)}$ for some index $i\in\{0,\dots,r\}$. Then there exists a family $\mathcal{S}=\{S_1,\dots,S_m\}$ of disjoint rainbow independent sets with $E(\mathcal{S})=E(\mathcal{T})+e$, such that $\sum_{j=1}^{m} |S_j\setminus T_j|\le 3^r$.
\end{lemma}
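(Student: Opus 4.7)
The plan is to prove the lemma by induction on $r$. The base case $r=0$ is immediate: given $T+e$ rainbow independent with $T=T_{j_0}\in \mathcal{T}^{(0)}=\mathcal{T}$, I set $S_{j_0}=T+e$ and $S_j=T_j$ otherwise, so that $\sum_j|S_j\setminus T_j|=1=3^0$. For the inductive step $r\ge 1$, if the given $T$ lies in $\mathcal{T}^{(i)}$ for some $i<r$, the induction hypothesis for $r-1$ (which applies since $\ell>3^r>3^{r-1}$) directly yields $\mathcal{S}$ with $\sum_j|S_j\setminus T_j|\le 3^{r-1}\le 3^r$, so I may assume $T\in \mathcal{T}^{(r)}$. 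Let $T'\in \mathcal{T}^{(r-1)}$ be the set with $T\su T'$, write $T'=T_{r-1,j_0}$, and set $D=T'\setminus T$.

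The first key step is to find $D^*\su D$ with $T+D^*+e$ rainbow independent and $|F|:=|D\setminus D^*|\le 2$. I would first delete from $D$ the unique element (if any) whose colour equals $c(e)$ to restore rainbowness; if the remaining set together with $T+e$ is still not independent, I apply circuit elimination in $T'+e$: since $T+e$ is independent, any circuit through $e$ in $T'+e$ must contain some further element of $D$, and removing it yields the desired $D^*$. If $F=\emptyset$ then $T'+e$ is rainbow independent and I apply the IH for $r-1$ directly to $T'$ and $e$. Otherwise $|F|\in\{1,2\}$.

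When $F\ne\emptyset$, the plan is to reinsert each $x\in F$ into some other set at level $r-1$. Since $x$ was deleted in the $r$-th reduction, $x$ has $\ell$ candidate sets in $\mathcal{T}^{(r-1)}$ (none equal to $T'$), so I can choose indices $j_x\ne j_0$, pairwise distinct, with $T_{r-1,j_x}+x$ rainbow independent; this is feasible because $\ell>3^r\ge 3\ge |F|+1$. This produces a modified level-$(r-1)$ family, obtained by replacing $T_{r-1,j_0}$ with $T+D^*+e$ and each $T_{r-1,j_x}$ with $T_{r-1,j_x}+x$, which is a family of disjoint rainbow independent sets whose union is $E(\mathcal{T}^{(r-1)})+e$ and which differs from $\mathcal{T}^{(r-1)}$ by $1+|F|\le 3$ element-additions to individual sets. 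I then lift each such level-$(r-1)$ addition down to level $0$ using the induction hypothesis for $r-1$, which costs at most $3^{r-1}$ per addition, for a total bound of $3\cdot 3^{r-1}=3^r$.

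The main obstacle I anticipate is that these $\le 3$ sub-tasks are not all standard pure-addition instances of the IH: the "main" modification replaces $T_{r-1,j_0}$ by a set not belonging to $\mathcal{T}^{(r-1)}$, and each reinsertion element $x$ already lies in $E(\mathcal{T})$, violating the hypothesis $e\notin E(\mathcal{T})$ of the IH. I plan to address this by performing the operations in the correct order — first removing $F$ from $T_{j_0}$, then incorporating $e$ there, and then reinserting each $x\in F$ into $T_{j_x}$ — so that after each preparatory removal the subsequent addition is a legitimate IH instance (with the previously-removed element no longer in the union), and then by verifying that the assembled family $\mathcal{S}$ satisfies $E(\mathcal{S})=E(\mathcal{T})+e$, the disjointness and rainbow-independence of each $S_j$, and the count $\sum_j|S_j\setminus T_j|\le 3^r$.
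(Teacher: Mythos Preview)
Your approach has a genuine gap in the step where you propose to ``lift each level-$(r-1)$ addition down to level $0$ using the induction hypothesis for $r-1$.'' The induction hypothesis, applied to a family $\mathcal{T}'$ and an element $e'\notin E(\mathcal{T}')$, requires that $T+e'$ be rainbow independent for some $T$ lying in one of the $\ell$-reductions $(\mathcal{T}')^{(0)},\dots,(\mathcal{T}')^{(r-1)}$ of $\mathcal{T}'$. Your first sub-task---adding $e$ after deleting $F$ from $T_{j_0}$---can actually be justified: since shrinking sets only makes more elements removable in each reduction, the level-$(r-1)$ set of the shrunken family inside $T_{j_0}\setminus F$ is contained in $T'\setminus F=T+D^*$, hence accepts $e$. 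But after this first application you obtain a family $\mathcal{S}^{(1)}$ about which the induction hypothesis tells you only $E(\mathcal{S}^{(1)})$ and the perturbation bound $\sum_j|S^{(1)}_j\setminus\widetilde{T}_j|\le 3^{r-1}$; it gives you no control whatsoever over the $\ell$-reductions $(\mathcal{S}^{(1)})^{(i)}$. The fact that each $x\in F$ had $\ell$ accepting sets in $\mathcal{T}^{(r-1)}$ therefore says nothing about whether any set in any $(\mathcal{S}^{(1)})^{(i)}$ accepts $x$, so the second (and third) IH applications are not legitimate instances. Reordering the operations does not help: the obstacle is not that $x\in E(\mathcal{T})$, but that the reduction structure you need is destroyed by the first application.

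The paper sidesteps this by applying the induction hypothesis only \emph{once}, and to $\mathcal{T}^{(1)}$ (not $\mathcal{T}$) as the base family with $r-1$ further reductions: this yields $\mathcal{S}'$ with $E(\mathcal{S}')=E(\mathcal{T}^{(1)})+e$ and perturbation at most $3^{r-1}$. It then extends each $S_j'$ back towards $T_j$ using Lemma~\ref{lem:rainbow-independence-simple-adding}, producing $\mathcal{S}^*$ with at most $2\cdot 3^{r-1}$ leftover elements, all of which lie in $E(\mathcal{T})\setminus E(\mathcal{T}^{(1)})$. By the definition of the $\ell$-reduction, each such leftover $x$ has $\ell$ accepting sets among the \emph{level-$0$} sets $T_j$, and since $S^*_j=T_j$ for all but at most $3^{r-1}$ indices and $\ell-3^{r-1}\ge 2\cdot 3^{r-1}$, these leftovers can be greedily reinserted into distinct untouched sets $S^*_j=T_j$. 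The key difference is that this final reinsertion uses the $\ell$-reduction property at a single level against level-$0$ sets directly, with no further recursion required.
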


\begin{proof}
We prove the lemma by induction on $r$. In the case $r=0$, by assumption $T+e$ is a rainbow independent set for some $T\in \mathcal{T}^{(0)}=\mathcal{T}$. Then we can take $\mathcal{S}$ to be the family obtained from $\mathcal{T}$ by replacing $T$ with $T+e$ (i.e., by adding $e$ to the set $T$).

So let us now assume that $r\ge 1$ and that we already proved the statement in the lemma for $r-1$. First note that $\mathcal{T}^{(1)}$ is the $\ell$-reduction of the family $\mathcal{T}^{(0)}=\mathcal{T}$. This means that, letting $\mathcal{T}^{(1)}=\{T_1',\dots,T_m'\}$, for $j=1,\dots,m$ we have $T_j'\su T_j$, and for every element $x\in E(\mathcal{T})\setminus E(\mathcal{T}^{(1)})$ there are $\ell$ different indices $j\in [m]$ such that $T_j+x$ is a rainbow independent set.

By assumption, there exist $i\in\{0,\dots,r\}$ and $T\in \mathcal{T}^{(i)}$ such that $T+e$ is a rainbow independent set. If $i\le r-1$, the desired statement already follows from the induction hypothesis for $r-1$, so let us assume that $i=r$. By applying the induction hypothesis to the family $\mathcal{T}^{(1)}=\{T_1',\dots,T_m'\}$ we find a family $\mathcal{S}'=\{S'_1,\dots,S'_m\}$ of disjoint rainbow independent sets with $E(\mathcal{S}')=E(\mathcal{T}^{(1)})+e$, such that $\sum_{j=1}^{m} |S'_j\setminus T_j'|\le 3^{r-1}$.

Now, we can extend $\mathcal{S}'$ to a family $\mathcal{S}^*=\{S^*_1,\dots,S^*_m\}$ of disjoint rainbow independent sets such that for $j=1,\dots,m$ we have $S'_j\su S^*_j\su S'_j\cup (T_j\setminus T'_j)$ and $|T_j\setminus (T'_j\cup S^*_j)|\le 2\cdot |S'_j\setminus T'_j|$. Indeed, for each $j=1,\dots,m$ let us add the elements of $S'_j\setminus T'_j$ to the rainbow independent set $(S'_j\cap T'_j)\cup (T_j\setminus T'_j)$, reinstating independence and resolving colouring conflicts by deleting at most $2\cdot |S'_j\setminus T'_j|$ elements chosen from $T_j\setminus T'_j$ (formally this follows from Lemma~\ref{lem:rainbow-independence-simple-adding} applied to the rainbow independent sets $S_j'$ and $(S'_j\cap T'_j)\cup (T_j\setminus T'_j)$). Then we have
\begin{equation}\label{eq:sumSstarminusT}
\sum_{j=1}^{m} |S^*_j\setminus T_j|=\sum_{j=1}^{m} |S'_j\setminus T_j|\le \sum_{j=1}^{m} |S'_j\setminus T_j'|\le 3^{r-1}.
\end{equation}

Furthermore, note that $E(\mathcal{T}^{(1)})+e=E(\mathcal{S}')\su E(\mathcal{S}^*)\su E(\mathcal{T})+e$ and therefore
\[|E(\mathcal{T})\setminus E(\mathcal{S}^*)|=|E(\mathcal{T})\setminus (E(\mathcal{T}^{(1)}) \cup E(\mathcal{S}^*))|\le  \sum_{j=1}^{m} |T_j\setminus (T'_j\cup S^*_j)|\le 2\sum_{j=1}^{m} |S'_j\setminus T_j'|\le 2\cdot 3^{r-1}.\]
Defining $X=E(\mathcal{T})\setminus E(\mathcal{S}^*)$, let $X=\{x_1,\dots,x_z\}$, and note that then $z=|X|\leq 2\cdot 3^{r-1}$. We will now show that we can distribute the elements in $X$ among the sets in $\mathcal{S}^*$ while maintaining rainbowness and independence.
\begin{claim}\label{clm:manypossibilities}
For each $h\in [z]$, $S^*_j+x_h$ is a rainbow independent set for at least $z$ indices $j\in [m]$.
\end{claim}

Using this claim, we can complete the proof easily. Indeed, by Claim~\ref{clm:manypossibilities}, we can greedily choose  distinct indices $j(1),\dots,j(z)\in [m]$ such that $S^*_{j(h)}+x_h$ is a rainbow independent set for each $h\in [z]$. 
Then, letting $S_{j(h)}=S^*_{j(h)}+x_{j(h)}$ for each $h\in [z]$, and $S_j=S_j^*$ for each $j\in [m]\setminus \{j(1),\dots,j(z)\}$, the family $\mathcal{S}=\{S_1, \dots,S_m\}$ of disjoint rainbow independent sets satisfies $E(\mathcal{S})=E(\mathcal{S}^*)\cup X=E(\mathcal{T})+e$ (as $e\in E(\mathcal{S}')\su E(\mathcal{S}^*)$). By \eqref{eq:sumSstarminusT}, we furthermore have
 $\sum_{j=1}^{m} |S_j\setminus T_j|\leq |X|+\sum_{j=1}^{m} |S^*_j\setminus T_j|\leq 2\cdot 3^{r-1}+3^{r-1}=3^{r}$, as desired. Thus, it only remains to prove Claim~\ref{clm:manypossibilities}.

\smallskip

\noindent\emph{Proof of Claim~\ref{clm:manypossibilities}.} 
Let $h\in [z]$. As $E(\mathcal{T}^{(1)})+e\su E(\mathcal{S}^*)$, we have that $X\su E(\mathcal{T})\setminus E(\mathcal{T}^{(1)})$ and, hence, that $T_j+x_h$ is a rainbow independent set for at least $\ell$ values of $j\in [m]$ by the definition of $\mathcal{T}^{(1)}$. From \eqref{eq:sumSstarminusT}, we can observe that $S_j^*\su T_j$ for all but at most $3^{r-1}$ indices $j\in [m]$. Thus, for at least $\ell-3^{r-1}\geq 2\cdot 3^{r-1}\geq z$ indices $j\in [m]$ we have that $S_j^*+x_h\su T_j+x_h$ and $T_j+x_h$ (and hence $S_j^*+x_h$) is a rainbow independent set. This completes the proof of the claim and hence the lemma.\hspace{2.0cm}$\boxdot$
\end{proof}

We will deduce Lemma~\ref{lem:inclusion-maximal-is-large} from Lemma~\ref{lem:switching-reduction} and the following lemma. The statement of this lemma has some similarities to the statement of \cite[Lemma 10]{pokrovskiy2020rota} due to Pokrovskiy, but there are several important differences between the two statements. These differences are crucial in order to obtain the stronger statement in Lemma~\ref{lem:inclusion-maximal-is-large} compared to Theorem~\ref{thm:pokrovskiy} from \cite{pokrovskiy2020rota}.

\begin{lemma}\label{lem:threeoutcomes} For any $\mu>0$ and $0<\lambda<1$ there exists $\gamma=\gamma(\lambda,\mu)>0$ such that for any positive integer $\ell$  and any sufficiently large $n$ (sufficiently large with respect to $\lambda$, $\mu$ and $\ell$) the following holds. Consider a coloured rank-$n$ matroid with colour classes $B_1,\dots,B_n$, such that each of the sets $B_i$ for $i=1,\dots,n$ is a basis, and let $\mathcal{T}$ be a family of $\lfloor (1+\lambda)n\rfloor$ disjoint rainbow independent sets. Then, at least one of the following three statements holds.
\begin{enumerate}[label = \emph{(\roman{enumi})}]
\item\labelinthm{outcome:1} There is some $T\in \mathcal{T}$  of size $|T|\le \mu n$.
\item\labelinthm{outcome:2} There is some $T\in \mathcal{T}$ and some colour $c\in [n]$ such that there are strictly more than $\mathrm{deg}_{\mathcal{A}(\mathcal{T})}(c)-\lfloor\lambda n\rfloor$ elements $e\in B_c\setminus E(\mathcal{T})$ such that $T+e$ is a rainbow independent set.
\item \labelinthm{outcome:3} The $\ell$-reduction $\mathcal{T}'$ of the family $\mathcal{T}$ satisfies $|E(\mathcal{T}')|\leq |E(\mathcal{T})|-\gamma n^2$.
\end{enumerate}
\end{lemma}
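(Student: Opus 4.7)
The plan is to assume that outcomes (i) and (ii) both fail, and derive (iii). For each $e \in E(\mathcal{T})$, let $a(e)$ denote the number of sets $T \in \mathcal{T}$ for which $T + e$ is a rainbow independent set. Since an element of $E(\mathcal{T})$ is removed in forming the $\ell$-reduction exactly when $a(e) \ge \ell$, the statement (iii) is equivalent to showing $|\{e \in E(\mathcal{T}) : a(e) \ge \ell\}| \ge \gamma n^2$. The strategy is to establish a lower bound of order $n^3$ on $\sum_{e \in E(\mathcal{T})} a(e)$, which, in view of the trivial bound $a(e) \le |\mathcal{T}|$, forces $\Omega(n^2)$ elements to have $a(e) \ge \ell$.

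For any edge $(T, c)$ of $\mathcal{A}(\mathcal{T})$, Lemma~\ref{lem:injecttoBi} applied to the independent set $T$ and the basis $B_c$ yields at least $n - |T| = \mathrm{deg}_{\mathcal{A}(\mathcal{T})}(T)$ elements $e \in B_c$ for which $T + e$ is independent, each of which is automatically rainbow since $c$ is missing from $T$. Failure of (ii) implies that at most $\mathrm{deg}_{\mathcal{A}(\mathcal{T})}(c) - \lfloor \lambda n\rfloor$ of these lie in $B_c \setminus E(\mathcal{T})$, and hence at least $\mathrm{deg}_{\mathcal{A}(\mathcal{T})}(T) - \mathrm{deg}_{\mathcal{A}(\mathcal{T})}(c) + \lfloor \lambda n\rfloor$ of them lie in $B_c \cap E(\mathcal{T})$. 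I then apply Corollary~\ref{cor:findinggoodedges} to the graph $G = \mathcal{A}(\mathcal{T})$ with $X = \mathcal{T}$ and $Y = [n]$, and parameters $\alpha = 1+\lambda$, $\sigma = 1$, and $\lambda' = \lambda$ (where $\lambda'$ plays the role of the corollary's $\lambda$); these satisfy $\sigma + \lambda' = \alpha > \sigma$ and $|X| \le \alpha|Y|$. Failure of (i) gives $\mathrm{deg}_G(T) < (1-\mu)n$ for all $T$, while disjointness of the bases yields $\mathrm{deg}_G(c) \ge |\mathcal{T}| - n = \lfloor \lambda n\rfloor \ge \delta |X|$ for a suitable $\delta = \delta(\lambda) > 0$ and all large $n$. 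The corollary then produces some $\rho = \rho(\lambda, \mu) > 0$ and a set of edges $E$ such that
\[\sum_{(T, c) \in E} \bigl(\mathrm{deg}_G(T) - \mathrm{deg}_G(c) + \lambda n\bigr) \;\ge\; \rho \cdot |\mathcal{T}| \cdot n^2.\]

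The distinct pairs $(T, e)$ arising from distinct edges of $E$ (note that $e$ determines its colour $c$) are all counted in $\sum_{e \in E(\mathcal{T})} a(e)$, and since $|E| \le |\mathcal{T}| \cdot n$ is negligible against $|\mathcal{T}| \cdot n^2$ for large $n$, the above inequality combined with the opening step yields $\sum_e a(e) \ge (\rho/2)\,|\mathcal{T}| \cdot n^2$. On the other hand, if (iii) failed, then using $a(e) \le |\mathcal{T}| \le (1+\lambda)n$ and $|E(\mathcal{T})| \le (1+\lambda)n^2$,
\[\sum_{e \in E(\mathcal{T})} a(e) \;<\; \gamma n^2 \cdot (1+\lambda) n + \ell \cdot (1+\lambda) n^2.\]
Choosing $\gamma = \rho/(3(1+\lambda))$ and taking $n$ sufficiently large in terms of $\lambda, \mu, \ell$ produces a contradiction. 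The only delicate aspect of the plan is aligning the parameters of Corollary~\ref{cor:findinggoodedges} with the threshold $\lfloor \lambda n\rfloor$ appearing in (ii): the numerical identity $|\mathcal{T}| - n = \lfloor \lambda n\rfloor$ is precisely what makes the choices $\alpha = 1+\lambda$, $\sigma = 1$, and $\lambda' = \lambda$ compatible at the boundary $\sigma + \lambda' = \alpha$, and this is where the exact hypothesis $|\mathcal{T}| = \lfloor (1+\lambda)n\rfloor$ enters the argument in full.
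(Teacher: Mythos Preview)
Your proof is correct and follows essentially the same approach as the paper: both apply Corollary~\ref{cor:findinggoodedges} with $\alpha=1+\lambda$, $\sigma=1$ and the same $\lambda$, use the failure of (ii) to convert the resulting inequality into a lower bound on $\sum_{(T,c)\in E}|\{e\in B_c\cap E(\mathcal{T}):T+e\text{ rainbow independent}\}|$, and contrast this with the upper bound forced by the failure of (iii). The only differences are organisational---you introduce $a(e)$ explicitly and set $\gamma=\rho/(3(1+\lambda))$ rather than $\gamma=\rho/3$---and the paper's appeal to the augmentation property where you invoke Lemma~\ref{lem:injecttoBi}.
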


\begin{proof} Let $\rho>0$ be such that the statement in Corollary~\ref{cor:findinggoodedges} holds with $\alpha=1+\lambda$ and $\delta=\lambda/4$ and $\sigma=1$ (as well as the given values of $\lambda$ and $\mu$), noting that $1+\lambda\ge \alpha >1$.  Now, let $\gamma=\rho/3>0$, let $\ell$ be a positive integer, and assume that $n$ is sufficiently large with respect to all the other parameters.  Consider a coloured matroid and a family $\mathcal{T}$ of disjoint rainbow independent sets as in the assumptions of the lemma, and let us assume for a contradiction that none of \ref{outcome:1}, \ref{outcome:2}, or \ref{outcome:3} holds.

Recall the definition of the colour-availability graph $\mathcal{A}(\mathcal{T})$ from the beginning of this section, and note that for every colour $c\in [n]$ we have
\[\mathrm{deg}_{\mathcal{A}(\mathcal{T})}(c)=|\mathcal{T}|-|E(\mathcal{T})\cap B_c|\ge |\mathcal{T}|-n=\lfloor (1+\lambda)n\rfloor-n=\lfloor \lambda n\rfloor\ge \frac{\lambda}{2}\cdot n\ge \frac{\lambda}{4}\cdot |\mathcal{T}|\] (using that $n$ is sufficiently large).
Furthermore, for each $T\in \mathcal{T}$, as \ref{outcome:1} does not hold and thus $|T|>\mu n$, we have $\mathrm{deg}_{\mathcal{A}(\mathcal{T})}(T)=n-|T|< (1-\mu)n$.
Thus, by the property of $\rho$ from its choice via Corollary~\ref{cor:findinggoodedges}, applied with $G=\mathcal{A}(\mathcal{T})$, $X=\mathcal{T}$ and $Y=[n]$, there is some set $E\su  E(\mathcal{A}(\mathcal{T}))$ such that
\begin{align}
\sum_{(T,c)\in E}\mathrm{deg}_{\mathcal{A}(\mathcal{T})}(T)&\geq \rho \cdot \lfloor(1+\lambda)n\rfloor \cdot n^2+\sum_{(T,c)\in E}(\mathrm{deg}_{\mathcal{A}(\mathcal{T})}(c)-\lambda n)\nonumber\\
&\geq \rho n^3-|E|+\sum_{(T,c)\in E}(\mathrm{deg}_{\mathcal{A}(\mathcal{T})}(c)-\lfloor\lambda n\rfloor)\nonumber\\
&\geq 2\gamma n^3+\sum_{(T,c)\in E}|\{e\in B_c\setminus E(\mathcal{T})\midusedassuchthat T+e\text{ rainbow independent}\}|,\label{eq:first-inequality}
\end{align}
where in the last step we have used that \ref{outcome:2} does not hold (and that $\rho n^3-|E|\ge 3\gamma n^3-(1+\lambda)n^2\ge 2\gamma n^3$).

Now, for each $e\in E(\mathcal{T}')$, by Definition~\ref{def:reduction} there are at most $\ell$ different sets $T\in \mathcal{T}$ such that $T+e$ is a rainbow independent set. Thus,
\begin{align*}
\sum_{(T,c)\in E}|\{e\in B_c\cap E(\mathcal{T})\midusedassuchthat T+e\text{ rainbow independent}\}|&\le \sum_{T\in \mathcal{T}}|\{e\in E(\mathcal{T})\midusedassuchthat T+e\text{ rainbow independent}\}|\\
&=\sum_{e\in E(\mathcal{T})}|\{T\in \mathcal{T}\midusedassuchthat T+e\text{ rainbow independent}\}| \\
&\leq |E(\mathcal{T}')|\cdot \ell+|E(\mathcal{T})\setminus E(\mathcal{T}')| \cdot |\mathcal{T}|\\
&\le n^2\cdot \ell+\gamma n^2\cdot (1+\lambda)n
<2\gamma n^3,
\end{align*}
where we have used that \ref{outcome:3} does not hold and therefore $|E(\mathcal{T})\setminus E(\mathcal{T}')|=|E(\mathcal{T})|-|E(\mathcal{T}')|\le \gamma n^2$. Combining this with \eqref{eq:first-inequality} yields
\begin{equation}\label{eq:forcontradictionnew}
    \sum_{(T,c)\in E}\mathrm{deg}_{\mathcal{A}(\mathcal{T})}(T)>\sum_{(T,c)\in E}|\{e\in B_c\midusedassuchthat T+e\text{ rainbow independent}\}|.\end{equation}

On the other hand, for each pair $(T,c)\in E$, there is an edge in $\mathcal{A}(\mathcal{T})$ between $T$ and $c$, so the colour $c$ does not appear on $T$. Furthermore, as $B_c$ is an independent set, there are at least $|B_c|-|T|=n-|T|=\mathrm{deg}_{\mathcal{A}(\mathcal{T})}(T)$ different elements $e\in B_c$ such that $T+e$ is independent. For all of these elements $e$, the set $T+e$ is then a rainbow independent set. Therefore we have
\[\sum_{(T,c)\in E}|\{e\in B_c\midusedassuchthat T+e\text{ rainbow independent}\}|\geq \sum_{(T,c)\in E}\mathrm{deg}_{\mathcal{A}(\mathcal{T})}(T),\]
a contradiction to \eqref{eq:forcontradictionnew}.
\end{proof}

Finally here, let us combine Lemmas~\ref{lem:switching-reduction} and \ref{lem:threeoutcomes} to prove Lemma~\ref{lem:inclusion-maximal-is-large}.

\begin{proof}[Proof of Lemma~\ref{lem:inclusion-maximal-is-large}]
Let $\mu=\nu/2$ and let $\gamma>0$ be chosen as in Lemma~\ref{lem:threeoutcomes} for $\lambda$ and $\mu$. Fix an integer $r>1/\gamma$ and let $\ell=3^r+1$. We may assume that $n$ is sufficiently large with respect to $\lambda$, $\gamma$, $r$ and $\ell$. Suppose, then, that $\mathcal{T}$ is a family of $\lfloor (1+\lambda)n\rfloor$ disjoint rainbow independent sets such that $E(\mathcal{T})$ is inclusion-wise maximal and assume for contradiction that $|E(\mathcal{T})|< (1-\nu)n^2$.

Let $U=(B_1\cup\dots\cup B_n)\setminus E(\mathcal{T})$ be the set of elements of $B_1\cup\dots\cup B_n$ not covered by $\mathcal{T}$, so that $|U|=n^2-|E(\mathcal{T})|> \nu n^2$.
Now, let us define families $\mathcal{T}^{(0)},\dots,\mathcal{T}^{(r)}$ of disjoint rainbow independent sets as in Lemma~\ref{lem:switching-reduction} by setting $\mathcal{T}^{(0)}=\mathcal{T}$ and taking $\mathcal{T}^{(i)}$ to be the $\ell$-reduction of the family $\mathcal{T}^{(i-1)}$ for $i=1,\dots,r$.

If there is some element $e\in U=(B_1\cup\dots\cup B_n)\setminus E(\mathcal{T})$, index $i\in \{0,\dots,r\}$, and $T\in \mathcal{T}^{(i)}$ such that $T+e$ is a rainbow independent set, then, by Lemma~\ref{lem:switching-reduction} there is a family $\mathcal{S}$ of $\lfloor (1+\lambda)n\rfloor$ disjoint rainbow independent sets with $E(\mathcal{S})=E(\mathcal{T})\cup \{e\}$, which contradicts the inclusion-wise maximality of $E(\mathcal{T})$. Thus, we have the observation that there is no element $e\in U=(B_1\cup\dots\cup B_n)\setminus E(\mathcal{T})$ such that $T+e$ is a rainbow independent set for some $T\in \mathcal{T}^{(i)}$ for some index $i\in\{0,\dots,r\}$.

For each $i=0,\dots,r-1$, let us now apply Lemma~\ref{lem:threeoutcomes}  to the family $\mathcal{T}^{(i)}$. Then, for each $i=0,\dots,r-1$, one of the three statements \ref{outcome:1} to  \ref{outcome:3} in Lemma~\ref{lem:threeoutcomes} holds. We will now show that \ref{outcome:1}, and afterwards \ref{outcome:2}, cannot hold for any index $i\in \{0,\dots,r-1\}$, before using that \ref{outcome:3} holds for every $i=0,\dots,r-1$ to reach our final contradiction.

If for some index $i\in \{0,\dots,r-1\}$ statement \ref{outcome:1} holds, then there is a set $T\in \mathcal{T}^{(i)}$ with $|T|\le \mu n=\nu n/2$. If for some colour $c$ not appearing on $T$ we had $|U\cap B_c|>|T|$, then we could find an element $e\in U\cap B_c$ such that $T+e$ is independent. But then $T+e$ would be a rainbow independent set, which is a contradiction to our observation above. Therefore, for every colour $c$ not appearing on $T$ we have $|U\cap B_c|\le |T|$. For the $|T|$ colours $c$ appearing on $T$, we clearly have $|U\cap B_c|\le n$, and so in total we obtain
\[|U|=\sum_{c=1}^n |U\cap B_c|\le n\cdot |T|+|T|\cdot n=|T|\cdot 2n\le \nu n^2,\]
contradicting $|U|> \nu n^2$. Thus, \ref{outcome:1} cannot hold for any index $i\in \{0,\dots,r-1\}$.

If for some index $i\in \{0,\dots,r-1\}$ statement \ref{outcome:2} holds, then there is a set $T\in \mathcal{T}^{(i)}$ and some colour $c\in [n]$ such that there are strictly more than $\mathrm{deg}_{\mathcal{A}(\mathcal{T}^{(i)})}(c)-\lfloor\lambda n\rfloor$ elements $e\in B_c\setminus E(\mathcal{T}^{(i)})$ such that $T+e$ is a rainbow independent set. By our above observation we must have $e\in E(\mathcal{T})$ and hence $e\in B_c\cap (E(\mathcal{T})\setminus E(\mathcal{T}^{(i)}))$ for all of these elements $e$, and consequently
\begin{equation}
|(B_c\cap E(\mathcal{T}))\setminus (B_c\cap E(\mathcal{T}^{(i)}))|=|B_c\cap (E(\mathcal{T})\setminus E(\mathcal{T}^{(i)}))|> \mathrm{deg}_{\mathcal{A}(\mathcal{T}^{(i)})}(c)-\lfloor\lambda n\rfloor.\label{eq:forcontradiction}
\end{equation}
On the other hand, $\mathrm{deg}_{\mathcal{A}(\mathcal{T}^{(i)})}(c)=|\mathcal{T}|-|B_c\cap E(\mathcal{T}^{(i)})|=\lfloor (1+\lambda)n\rfloor-|B_c\cap E(\mathcal{T}^{(i)})|$, and therefore
\begin{align*}
\mathrm{deg}_{\mathcal{A}(\mathcal{T}^{(i)})}(c)-\lfloor\lambda n\rfloor&=\lfloor (1+\lambda)n\rfloor-|B_c\cap E(\mathcal{T}^{(i)})|-\lfloor\lambda n\rfloor=n-|B_c\cap E(\mathcal{T}^{(i)})|\\
&\ge |B_c\cap E(\mathcal{T})|-|B_c\cap E(\mathcal{T}^{(i)})|=|(B_c\cap E(\mathcal{T}))\setminus (B_c\cap E(\mathcal{T}^{(i)}))|.
\end{align*}
This contradicts \eqref{eq:forcontradiction}, and thus \ref{outcome:2} does not hold for any index $i\in \{0,\dots,r-1\}$.

Thus, \ref{outcome:3} must hold for all $i=0,\dots,r-1$. But this means that $|E(\mathcal{T}^{(i+1)})|\leq |E(\mathcal{T}^{(i)})|-\gamma n^2$ for all $i=0,\dots,r-1$. Hence $|E(\mathcal{T}^{(r)})|\leq |E(\mathcal{T}^{(0)})|-r\cdot \gamma n^2=|E(\mathcal{T})|-r\cdot \gamma n^2\le n^2-r\cdot \gamma n^2<0$ (recalling that $r>1/\gamma$), which is again a contradiction. This finishes the proof of  Lemma~\ref{lem:inclusion-maximal-is-large}.
\end{proof}

We end this section by showing how Theorem~\ref{thm:pokrovskiy} can be deduced from Lemma~\ref{lem:inclusion-maximal-is-large}.

\begin{proof}[Proof of Theorem~\ref{thm:pokrovskiy}]
Let $\lambda=\eps^2/2$ and $\nu=\eps^2/2$, and let $n$ be sufficiently large such that the statement in Lemma~\ref{lem:inclusion-maximal-is-large} holds.
Let us choose a family $\mathcal{T}$ of $\lfloor (1+\lambda)n\rfloor$ disjoint rainbow independent sets such that $E(\mathcal{T})$ is inclusion-maximal among all such families. By Lemma~\ref{lem:inclusion-maximal-is-large} we have $|E(\mathcal{T})|\ge (1-\nu)n^2$. Let $m$ be the number of sets $T\in \mathcal{T}$ with size at least $(1-\eps)n$. Then, the $m$ largest sets $T\in \mathcal{T}$ each have size at most $|T|\le n$, and the remaining $\lfloor (1+\lambda)n\rfloor-m\le (1+\lambda) n-m$ sets $T\in \mathcal{T}$ each have size $|T|< (1-\eps)n$, so the total size $|E(\mathcal{T})|$ of the family $\mathcal{T}$ satisfies
\[(1-\nu)n^2\le |E(\mathcal{T})| \le m\cdot  n+((1+\lambda) n-m)\cdot (1-\eps)n=(1+\lambda)(1-\eps)n^2+m\cdot \eps n\le (1-\eps+\lambda)n^2+m\cdot \eps n.\]
This implies $m\ge (\eps-\lambda-\nu)n^2/(\eps n)=(\eps-\eps^2)n/\eps =(1-\eps) n$, meaning that $\mathcal{T}$ contains at least $(1-\eps)n$ sets $T\in \mathcal{T}$ of size $|T|\ge (1-\eps)n$, each of which is rainbow and independent, so the conclusion of the theorem holds.
\end{proof}

%%%%%%%%%%%%%%%%%%%%%%%%%%%%%%%%%%%%%%%%%%%%%%%%%%%%%%%%%%%%%%
%%%%%%%%%%%%%%%%%%%%%%%%%%%%%%%%%%%%%%%%%%%%%%%%%%%%%%%%%%%%%%

\section{Large families avoiding a random set: proof of Lemma~\ref{keylem:almostalmost-new}}
\label{sec:proof-almostalmost-new}

The proof of Lemma~\ref{keylem:almostalmost-new} is similar to the proof of Lemma~\ref{lem:inclusion-maximal-is-large} in the previous section, and will in particular also rely on Corollary~\ref{cor:findinggoodedges} and Lemma~\ref{lem:switching-reduction}.
First, we show that with high probability the random set $R\su B_1\cup \dots\cup B_n$ in  Lemma~\ref{keylem:almostalmost-new} has the properties in the following two lemmas. The property \ref{property:diamond} in the first lemma estimates how often each colour appears in $R$, and the property \ref{property:star} in the second lemma states, roughly speaking, that, for every independent set $T$ and any set $C\su [n]$, there are many ways to extend $T$ to a larger independent set by adding an element in $\bigcup_{c\in C} B_c\setminus R$.

\begin{lemma}\label{lem:reservoir-colour}
For any $0<\eta<1$ and $\gamma>0$, the following holds for any coloured rank-$n$ matroid with colour classes $B_1,\dots,B_n$ of size $n$. Let $R\su B_1\cup \dots\cup B_n$ be a set of elements drawn independently at random with probability $\eta$. Then, with high probability (more precisely, with probability tending to $1$ as $n\to \infty$ with $\eta$ and $\gamma$ fixed), the following holds.
\begin{enumerate}[label = \emph{($\blacklozenge$)}]
\item \labelinthm{property:diamond} For every colour  $c\in [n]$,  we have $(\eta-\gamma)n\le |B_c\cap R|\le (\eta+\gamma)n$.
\end{enumerate}
\end{lemma}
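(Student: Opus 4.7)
The plan is to prove this by a standard concentration-and-union-bound argument, since each $|B_c \cap R|$ is a sum of independent Bernoulli random variables.

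First I would fix a colour $c\in [n]$. Since $B_c$ is a basis of a rank-$n$ matroid, we have $|B_c|=n$, and by construction of $R$ each of the $n$ elements of $B_c$ lies in $R$ independently with probability $\eta$. Therefore $|B_c \cap R|$ has a binomial distribution with parameters $n$ and $\eta$, and in particular $\mathbb{E}[|B_c\cap R|]=\eta n$. Applying Hoeffding's inequality (or the Chernoff bound) to this binomial, I get
\[
\Pr\bigl[\,\bigl||B_c\cap R|-\eta n\bigr|>\gamma n\,\bigr]\;\le\;2\exp(-2\gamma^2 n).
\]

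Next, I take a union bound over all $n$ colours. This gives
\[
\Pr\bigl[\,\exists\,c\in[n]\colon\;\bigl||B_c\cap R|-\eta n\bigr|>\gamma n\,\bigr]\;\le\;2n\exp(-2\gamma^2 n),
\]
which tends to $0$ as $n\to\infty$ with $\eta$ and $\gamma$ fixed. Consequently, with high probability we have $(\eta-\gamma)n\le |B_c\cap R|\le(\eta+\gamma)n$ simultaneously for all $c\in[n]$, which is exactly property \ref{property:diamond}.

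There is no real obstacle here; the whole argument is a one-line Chernoff bound followed by a one-line union bound. The only mild subtlety is to remember that $|B_c|=n$ exactly (because $B_c$ is a basis and the rank is $n$), so that each $|B_c\cap R|$ is genuinely $\mathrm{Bin}(n,\eta)$ rather than a sum of a smaller or variable number of independent trials.
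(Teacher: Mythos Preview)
Your proof is correct and essentially identical to the paper's own proof: both apply a Chernoff/Hoeffding bound to the binomial $|B_c\cap R|\sim\mathrm{Bin}(n,\eta)$ to get the tail bound $2e^{-2\gamma^2 n}$, then take a union bound over the $n$ colours. (A tiny remark: the hypothesis $|B_c|=n$ is stated directly in the lemma rather than deduced from $B_c$ being a basis, but this is immaterial.)
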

\begin{proof}
For each colour $c\in [n]$, note that  $|B_c\cap R|$ is a sum of $n$ independent Bernoulli random variables  and $\mathbb{E}[|B_c\cap R|]=\eta n$. Thus, by a Chernoff bound (see e.g.\ \cite[Theorem A.1.4]{alon-spencer}) we have
\[\P[||B_c\cap R|-\eta n|>\gamma n]\le 2\cdot e^{-2(\gamma n)^2/n}=2\cdot e^{-2\gamma^2 n}.\]
Taking a union bound over all $c\in [n]$ shows that with probability at least $1-n\cdot 2\cdot e^{-2\gamma^2 n}=1-o(1)$ we indeed have $(\eta-\gamma)n\le |B_c\cap R|\le (\eta+\gamma)n$  for all $c\in [n]$.
\end{proof}

\begin{lemma}\label{lem:reservoir-extension}
For any $0<\eta<1$ and $\gamma>0$, the following holds for any coloured rank-$n$ matroid with colour classes $B_1,\dots,B_n$, such that each of the sets $B_i$ for $i=1,\dots,n$ is a basis. Let $R\su B_1\cup \dots\cup B_n$ be a set of elements drawn independently at random with probability $\eta$. Then, with high probability (more precisely, with probability tending to $1$ as $n\to \infty$ with $\eta$ and $\gamma$ fixed), the following holds.
\begin{enumerate}[label = \emph{($\bigstar$)}]
\item  For every independent set $T\su B_1\cup \dots\cup B_n$, and every colour subset $C\su [n]$, there are at least $(1-\eta)\cdot (n-|T|)\cdot |C|-\gamma n^2$ elements $e\in \bigcup_{c\in C} B_c\setminus R$ such that $T+e$ is independent.\labelinthm{property:star}
\end{enumerate}
\end{lemma}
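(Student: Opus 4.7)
The plan is to combine a deterministic matroid-theoretic lower bound on the number of extensions of $T$ with a Chernoff-type concentration inequality for $|N\cap R|$, and then take a union bound over all pairs $(T,C)$. The key observation is that the Chernoff tail will be exponentially small in $n^2$, whereas the entropy of the pair $(T,C)$ is only $e^{O(n\log n)}$, so the union bound saves us comfortably.

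First I would fix an independent set $T\subseteq B_1\cup\dots\cup B_n$ and a colour subset $C\subseteq [n]$, and for each $c\in C$ apply Lemma~\ref{lem:injecttoBi} to the independent set $T$ and the basis $B_c$. This yields an injection $\phi_c\colon T\to B_c$ such that $T+b$ is independent for every $b\in B_c\setminus \phi_c(T)$. Setting $N_c=\{e\in B_c:T+e\text{ is independent}\}$, we therefore have $|N_c|\geq n-|T|$. Since the bases $B_c$ are pairwise disjoint, so are the sets $N_c$, and their union $N:=\bigcup_{c\in C} N_c$ satisfies $|N|\geq (n-|T|)|C|$. Note that the quantity $(\bigstar)$ asks to lower bound is exactly $|N\setminus R|$.

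For the probabilistic step, $|N\cap R|$ is a sum of $|N|$ independent $\mathrm{Bernoulli}(\eta)$ variables, so $\mathbb{E}[|N\setminus R|]=(1-\eta)|N|$, and the Chernoff--Hoeffding bound (see \cite[Theorem A.1.4]{alon-spencer}) gives
\[
\Pr\bigl[\,|N\setminus R|<(1-\eta)|N|-\gamma n^2\,\bigr]\leq \exp(-2\gamma^2 n^4/|N|)\leq \exp(-2\gamma^2 n^2),
\]
using $|N|\leq n^2$. When the bad event fails, we obtain $|N\setminus R|\geq (1-\eta)|N|-\gamma n^2\geq (1-\eta)(n-|T|)|C|-\gamma n^2$, which is exactly what $(\bigstar)$ requires. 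Finally I would union bound over $(T,C)$: since every independent set $T$ has $|T|\leq n$, there are at most $\sum_{k=0}^n \binom{n^2}{k}\leq (n+1)(en)^n=e^{O(n\log n)}$ choices for $T$, and $2^n$ choices for $C$, so the total number of pairs is $e^{O(n\log n)}$. As $e^{O(n\log n)}\cdot \exp(-2\gamma^2 n^2)\to 0$ when $n\to\infty$, property $(\bigstar)$ holds simultaneously for all $(T,C)$ with probability $1-o(1)$. The only real subtlety is checking that this union bound closes, but the gap between the Chernoff exponent $\Theta(n^2)$ and the entropy $\Theta(n\log n)$ of $(T,C)$ is very comfortable.
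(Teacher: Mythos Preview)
Your proof is correct and follows essentially the same approach as the paper's: define the set of valid extensions $N=E_{T,C}$, bound $|N|\ge (n-|T|)|C|$, apply a Chernoff bound with exponent $\Theta(n^2)$, and take a union bound over the $e^{O(n\log n)}$ pairs $(T,C)$. The only cosmetic difference is that the paper derives $|N_c|\ge n-|T|$ directly from the augmentation property (since $B_c$ is a basis), whereas you route this through Lemma~\ref{lem:injecttoBi}; the latter is a bit heavier than needed but perfectly valid.
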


\begin{proof}
For each independent set $T\su B_1\cup \dots\cup B_n$ and each colour subset $C\su [n]$, there are at least $(n-|T|)\cdot |C|$ elements $e\in \bigcup_{c\in C}B_c$ such that $T+e$ is independent (since there are at least $n-|T|$ such elements in every colour $c\in C$); say the set of those elements $e$ is $E_{T,C}$. Now, property \ref{property:star} holds for $T$ and $C$ precisely when $|E_{T,C}\setminus R|\ge (1-\eta)(n-|T|)|C|-\gamma n^2$. Note that we have $\mathbb{E}[|E_{T,C}\setminus R|]=(1-\eta)|E_{T,C}|\ge (1-\eta)(n-|T|)|C|$. Thus, observing that $|E_{T,C}\setminus R|$ is a sum of $|E_{T,C}|\le n^2$ independent Bernoulli random variables, by a Chernoff bound (see e.g.\ \cite[Theorem A.1.4]{alon-spencer}) we have
\[
\P\Big[|E_{T,C}\setminus R|\leq (1-\eta)(n-|T|)|C|-\gamma n^2\Big]\leq \P\Big[|E_{T,C}\setminus R|\leq \mathbb{E}[|E_{T,C}\setminus R|]-\gamma n^2\Big]\le e^{-2(\gamma n^2)^2/n^2}=e^{-2\gamma^2 n^2}.
\]
for each independent set $T\su B_1\cup \dots\cup B_n$ and each colour subset $C\su [n]$. Note there are at most $\sum_{t=0}^{n}\binom{n^2}{t}\le (n^2)^n=n^{2n}$ independent sets $T\su B_1\cup \dots\cup B_n$ and only  $2^n$ colour subsets $C\su [n]$. Thus, the probability that property \ref{property:star} fails for some choice of $T$ and $C$ is at most
$n^{2n}\cdot 2^n\cdot e^{-2\gamma^2 n^2}=o(1)$,
as required.
\end{proof}

The following lemma is the analogue of Lemma~\ref{lem:threeoutcomes} needed in the proof of Lemma~\ref{keylem:almostalmost-new}.

\begin{lemma}\label{lem:threeoutcomes-reservoir} For any $0<\eps<\eta<1$ and any $\mu>0$ there exists $\gamma=\gamma(\eps,\eta,\mu)>0$ such that for any positive integer $\ell$  and any sufficiently large $n$ (sufficiently large with respect to $\eps$, $\eta$, $\mu$ and $\ell$) the following holds. Consider a coloured rank-$n$ matroid with colour classes $B_1,\dots,B_n$, such that each of the sets $B_i$ for $i=1,\dots,n$ is a basis, and let $R\su B_1\cup \dots\cup B_n$ satisfy properties \emph{\ref{property:diamond}} and \emph{\ref{property:star}} above. Finally, let $\mathcal{T}$ be a family of $\lfloor (1-\eps)n\rfloor$ disjoint rainbow independent sets in $(B_1\cup \dots\cup B_n)\setminus R$. Then at least one of the following three statements holds.
\begin{enumerate}[label = \emph{(\roman{enumi})}]
\item\labelinthm{res-outcome:1} There is some $T\in \mathcal{T}$  of size $|T|\le \mu n$.
\item\labelinthm{res-outcome:2} For some $T\in \mathcal{T}$ and some colour $c\in [n]$ there are strictly more than $\mathrm{deg}_{\mathcal{A}(\mathcal{T})}(c)-|B_c\cap R|+\lceil\eps n\rceil$ elements $e\in B_c\setminus (E(\mathcal{T})\cup R)$ such that $T+e$ is a rainbow independent set.
\item \labelinthm{res-outcome:3} The $\ell$-reduction $\mathcal{T}'$ of the family $\mathcal{T}$ satisfies $|E(\mathcal{T}')|\leq |E(\mathcal{T})|-\gamma n^2$.
\end{enumerate}
\end{lemma}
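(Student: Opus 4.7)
The plan is to follow the template of the proof of Lemma~\ref{lem:threeoutcomes} quite closely, but with two crucial modifications to handle the reservoir $R$ and the fact that $|\mathcal{T}| < n$ here (instead of $|\mathcal{T}| > n$ as in Lemma~\ref{lem:threeoutcomes}). Assume for contradiction that none of \ref{res-outcome:1}, \ref{res-outcome:2}, \ref{res-outcome:3} holds. As before, consider the colour-availability graph $\mathcal{A}(\mathcal{T})$ with vertex classes $\mathcal{T}$ and $[n]$. By the failure of \ref{res-outcome:1}, every $T\in \mathcal{T}$ satisfies $\mathrm{deg}_{\mathcal{A}(\mathcal{T})}(T)<(1-\mu)n$. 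Since $E(\mathcal{T})$ is disjoint from $R$, property \ref{property:diamond} yields
\[
\mathrm{deg}_{\mathcal{A}(\mathcal{T})}(c)=|\mathcal{T}|-|E(\mathcal{T})\cap B_c|\ge |\mathcal{T}|-(n-|B_c\cap R|)= |B_c\cap R|-\lceil \eps n\rceil\ge (\eta-\gamma-\eps)n
\]
for every $c\in[n]$.

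Next I would apply Corollary~\ref{cor:findinggoodedges} with $X=\mathcal{T}$, $Y=[n]$, $\alpha=1-\eps$ (so that $|X|\le \alpha|Y|$), $\delta$ coming from the lower bound on $\mathrm{deg}(c)$ above, $\lambda$ chosen just below $\eta-\gamma-\eps$ so that the failure of \ref{res-outcome:2} combined with \ref{property:diamond} gives the per-edge bound $|\{e\in B_c\setminus (E(\mathcal{T})\cup R)\midusedassuchthat T+e\text{ rainbow ind.}\}|\le \mathrm{deg}(c)-\lambda n$, and $\sigma$ chosen just above $1-\eta$ (which is the smallest value compatible with $\sigma+\lambda\ge \alpha$ and $\sigma<\alpha$; the interval $(1-\eta,1-\eps)$ is non-empty since $\eta>\eps$). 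This yields an edge set $E\subseteq E(\mathcal{A}(\mathcal{T}))$ with
\[
\sigma\sum_{(T,c)\in E}\mathrm{deg}(T)\ge \rho|\mathcal{T}|n^2+\sum_{(T,c)\in E}\bigl(\mathrm{deg}(c)-\lambda n\bigr).
\]

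The key new ingredient is to use property \ref{property:star} to get a sharp lower bound on $\sum_{(T,c)\in E}|\{e\in B_c\setminus R\midusedassuchthat T+e\text{ ind.}\}|$. For each $T$, applying \ref{property:star} with the colour set $C=\{c\midusedassuchthat (T,c)\in E\}$ gives at least $(1-\eta)\mathrm{deg}(T)\cdot|\{c\midusedassuchthat (T,c)\in E\}|-\gamma n^2$ elements, and summing over $T$ yields $\ge (1-\eta)\sum_{(T,c)\in E}\mathrm{deg}(T)-\gamma n^3$. On the other hand, I bound this sum from above by splitting each $e\in B_c\setminus R$ into $e\in E(\mathcal{T}')$, $e\in E(\mathcal{T})\setminus E(\mathcal{T}')$, and $e\in B_c\setminus (E(\mathcal{T})\cup R)$: the first contributes at most $|E(\mathcal{T}')|\cdot \ell\le n^2\ell$ (by Definition~\ref{def:reduction}), the second contributes at most $|E(\mathcal{T})\setminus E(\mathcal{T}')|\cdot |\mathcal{T}|\le \gamma n^3$ (by failure of \ref{res-outcome:3}), and the third contributes at most $\sum\mathrm{deg}(c)-|E|(\eta-\gamma-\eps)n$ (by failure of \ref{res-outcome:2} and \ref{property:diamond}).

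Combining the resulting upper bound with the lower bound from \ref{property:star} and the inequality from Corollary~\ref{cor:findinggoodedges}, the terms in $\sum\mathrm{deg}(c)$ and in $|E|n$ essentially cancel because $(1-\eta)-\sigma\approx 0$ and $\sigma(\eta-\gamma-\eps)-(1-\eta)\lambda$ is only of order $\gamma$, leaving $(1-\eta)\rho|\mathcal{T}|n^2$ as the dominant positive term against $O(\gamma n^3)$, which gives the desired contradiction once $\gamma$ is chosen small enough in terms of $\eps$, $\eta$, $\mu$ and $\rho$. The main obstacle will precisely be this parameter calibration: unlike in Lemma~\ref{lem:threeoutcomes} where $\sigma=1$ worked and the Corollary's inequality could be used directly, here we need $\sigma<1$ (forced by $|\mathcal{T}|<n$), and it is only the $(1-\eta)$ factor from \ref{property:star} that can compensate for the resulting loss, so the two parameters must be matched carefully.
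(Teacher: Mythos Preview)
Your approach is essentially the same as the paper's: contradict all three outcomes, bound $\deg_{\mathcal A(\mathcal T)}(c)$ from below using \ref{property:diamond}, apply Corollary~\ref{cor:findinggoodedges}, split the sum over $B_c\setminus R$ into the three pieces you describe, and use \ref{property:star} for the lower bound. All the key ideas are present and correctly identified, including the crucial observation that the factor $1-\eta$ from \ref{property:star} is exactly what compensates for having to take $\sigma<1$.

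The one point that needs fixing is your parameter calibration. You propose $\lambda$ ``just below $\eta-\gamma-\eps$'' and $\sigma$ ``just above $1-\eta$'', with $\gamma$ to be chosen afterwards small in terms of $\rho$. This is circular, since $\rho$ comes from Corollary~\ref{cor:findinggoodedges} and depends on $\lambda$, which you have made depend on $\gamma$. Moreover, with those choices $\sigma+\lambda\approx 1-\gamma-\eps<1-\eps=\alpha$, so the hypothesis $\sigma+\lambda\ge\alpha$ of Corollary~\ref{cor:findinggoodedges} may fail. The paper avoids both issues by taking $\sigma=1-\eta$ and $\lambda=\eta-\eps$ \emph{exactly} (so $\sigma+\lambda=\alpha$ with equality, and neither depends on $\gamma$), obtaining $\rho$ first, and only then setting $\gamma=\min(\rho(1-\eps)/4,(\eta-\eps)/4)$. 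The passage from the Corollary's term $\deg(c)-(\eta-\eps)n$ to the quantity $\deg(c)-|B_c\cap R|+\lceil\eps n\rceil$ needed to invoke the failure of \ref{res-outcome:2} is then done \emph{after} applying the corollary, using \ref{property:diamond} at the cost of an extra $O(\gamma n)$ per edge, which is absorbed into the $\rho n^3$ term.
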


Note that if this statement holds for some value of $\gamma=\gamma(\eps,\eta,\mu)>0$, it automatically also holds for all smaller positive values of $\gamma$ (indeed, the assumptions \ref{property:diamond} and \ref{property:star} get harder to satisfy for smaller  $\gamma$ and the conclusion in \ref{res-outcome:3} gets easier to satisfy for smaller $\gamma$).

The proof of the lemma is very similar to the proof of Lemma~\ref{lem:threeoutcomes} in the previous section (but it requires some adaptations to incorporate the set $R$).

\begin{proof}[Proof of Lemma~\ref{lem:threeoutcomes-reservoir}]
Let $\rho>0$ be such that the statement in Corollary~\ref{cor:findinggoodedges} holds with $\alpha=1-\eps$, $\delta=(\eta-\eps)/2$, $\sigma=1-\eta$ and $\lambda=\eta-\eps$ (as well as the given value of $\mu$), noting that $(1-\eta)+(\eta-\eps)\ge \alpha>1-\eta$.  Now, let $\gamma=\min(\rho(1-\eps)/4,(\eta-\eps)/4)>0$, let $\ell$ be a positive integer, and assume that $n$ is sufficiently large with respect to all the other parameters.  Let us consider a coloured matroid and a family $\mathcal{T}$ of disjoint rainbow independent sets as in the assumptions of the lemma, and assume for a contradiction that none of \ref{outcome:1}, \ref{outcome:2}, or \ref{outcome:3} holds.

Recall the definition of the colour-availability graph $\mathcal{A}(\mathcal{T})$ from the beginning of Section~\ref{sect-first-proof}, and note that for every colour $c\in [n]$ we have
\[\mathrm{deg}_{\mathcal{A}(\mathcal{T})}(c)=|\mathcal{T}|-|E(\mathcal{T})\cap B_c|\ge \lfloor (1-\eps)n\rfloor-|B_c\setminus R|\ge (1-\eps-\gamma)n-(1-\eta+\gamma)n= (\eta-\eps-2\gamma)\cdot n\ge \frac{\eta-\eps}{2}\cdot |\mathcal{T}|,\]
using that $|B_c\setminus R|=n-|B_c\cap R|\le (1-\eta+\gamma)n$ by {\ref{property:diamond}} and that $n$ is sufficiently large. Furthermore, for each $T\in \mathcal{T}$, as \ref{outcome:1} does not hold and thus $|T|>\mu n$, we have $\mathrm{deg}_{\mathcal{A}(\mathcal{T})}(T)=n-|T|< (1-\mu)n$.
Thus, by the statement in Corollary~\ref{cor:findinggoodedges}, there is some set $E\su E(\mathcal{A}(\mathcal{T}))$ such that
\begin{align}
(1-\eta)\cdot \sum_{(T,c)\in E}\mathrm{deg}_{\mathcal{A}(\mathcal{T})}(T)&\geq \rho \cdot \lfloor (1-\eps)n\rfloor \cdot n^2+\sum_{(T,c)\in E}(\mathrm{deg}_{\mathcal{A}(\mathcal{T})}(c)-(\eta-\eps) n)\nonumber\\
&\geq \rho(1-\eps) n^3-n^2-|E| \cdot(1+\gamma n)+\sum_{(T,c)\in E}(\mathrm{deg}_{\mathcal{A}(\mathcal{T})}(c)-(\eta-\gamma)n+\lceil \eps n\rceil)\nonumber\\
&\geq \rho(1-\eps) n^3-2\gamma n^3+\sum_{(T,c)\in E}(\mathrm{deg}_{\mathcal{A}(\mathcal{T})}(c)-|B_c\cap R|+\lceil\eps n\rceil)\nonumber\\
&\geq 2\gamma n^3+\sum_{(T,c)\in E}|\{e\in B_c\setminus (E(\mathcal{T})\cup R)\midusedassuchthat T+e\text{ rainbow independent}\}|,\label{eq:first-inequality-reservoir}
\end{align}
where in the third step we used that that $|B_c\cap R|\ge (\eta-\gamma)n$ by {\ref{property:diamond}} and in the last step that \ref{outcome:2} does not hold (and that $\rho(1-\eps)\ge 4\gamma$).

Now, for each $e\in E(\mathcal{T}')$, there are at most $\ell$ different sets $T\in \mathcal{T}$ such that $T+e$ is a rainbow independent set. Thus,
\begin{align*}
\sum_{(T,c)\in E}|\{e\in B_c\cap E(\mathcal{T})\midusedassuchthat T+e\text{ rainbow independent}\}|&\le \sum_{T\in \mathcal{T}}|\{e\in E(\mathcal{T})\midusedassuchthat T+e\text{ rainbow independent}\}|\\
&=\sum_{e\in E(\mathcal{T})}|\{T\in \mathcal{T}\midusedassuchthat T+e\text{ rainbow independent}\}| \\
&\leq |E(\mathcal{T}')|\cdot \ell+|E(\mathcal{T})\setminus E(\mathcal{T}')| \cdot |\mathcal{T}|\\
&\le n^2\cdot \ell+\gamma n^2\cdot (1-\eps)n
<\gamma n^3,
\end{align*}
where we have used that \ref{outcome:3} does not hold and therefore $|E(\mathcal{T})\setminus E(\mathcal{T}')|=|E(\mathcal{T})|-|E(\mathcal{T}')|\le \gamma n^2$. Combining this with \eqref{eq:first-inequality-reservoir} yields
\begin{equation}\label{eq:second-inequality-reservoir}
(1-\eta)\cdot\!\!\!\!\sum_{(T,c)\in E}\!\!\!\!(n-|T|)=(1-\eta)\cdot\!\!\!\!\sum_{(T,c)\in E}\!\!\mathrm{deg}_{\mathcal{A}(\mathcal{T})}(T)> \gamma n^3+\!\!\sum_{(T,c)\in E}\!\!\!\!|\{e\in B_c\setminus R\midusedassuchthat T+e\text{ rainbow independent}\}|.
\end{equation}

On the other hand, for each $T\in \mathcal{T}$, let $C(T)\su [n]$ be the set of colours $c\in [n]$ with $(T,c)\in E$. Then, for every $c\in C(T)$, there is an edge in $\mathcal{A}(\mathcal{T})$ between $T$ and $c$, so the colour $c$ does not appear on $T$. Furthermore, for each $T\in \mathcal{T}$, by \ref{property:star} there are at least $(1-\eta)\cdot (n-|T|)\cdot |C(T)|-\gamma n^2$ elements $e\in \bigcup_{c\in C(T)} B_c\setminus R$ such that $T+e$ is independent. For all of these elements $e$, the set $T+e$ is then a rainbow independent set. Therefore, for each $T\in \mathcal{T}$, we have
\[\sum_{c\in C(T)}|\{e\in B_c\setminus R\midusedassuchthat T+e\text{ rainbow independent}\}|\geq (1-\eta)\cdot (n-|T|)\cdot |C(T)|-\gamma n^2.\]
Summing this up for all $T\in \mathcal{T}$, we obtain
\begin{align*}
\sum_{(T,c)\in E}|\{e\in B_c\setminus R\midusedassuchthat T+e\text{ rainbow independent}\}|&\geq \sum_{T\in \mathcal{T}}\Big((1-\eta)\cdot (n-|T|)\cdot |C(T)|-\gamma n^2\Big),\\
&> (1-\eta)\cdot\!\!\sum_{(T,c)\in E}(n-|T|)-\gamma n^3,
\end{align*}
which contradicts \eqref{eq:second-inequality-reservoir}.
\end{proof}

Again, we can combine Lemmas~\ref{lem:switching-reduction} and \ref{lem:threeoutcomes-reservoir} to prove Lemma~\ref{keylem:almostalmost-new}.

\begin{proof}[Proof of Lemma~\ref{keylem:almostalmost-new}]
Let $\mu=\nu/3$ and let $\gamma>0$ be chosen as in Lemma~\ref{lem:threeoutcomes-reservoir}. We may assume without loss of generality that $\gamma<\nu/3$ (since making $\gamma$ smaller will not violate the statement in Lemma~\ref{lem:threeoutcomes-reservoir}). Fix an integer $r>1/\gamma$ and let $\ell=3^r+1$. We may assume that $n$ is sufficiently large with respect to $\eps$, $\eta$, $\gamma$, $r$ and $\ell$. Take, then, any coloured rank-$n$ matroid with colour classes $B_1,\dots,B_n$, such that each of the sets $B_i$ for $i=1,\dots,n$ is a basis, and let $R\su B_1\cup \dots\cup B_n$ be a set of elements drawn independently at random with probability $\eta$.

By Lemmas~\ref{lem:reservoir-colour} and \ref{lem:reservoir-extension}, with high probability the random set $R\su B_1\cup \dots\cup B_n$ satisfies \ref{property:diamond} and \ref{property:star}. Thus, it suffices to prove that whenever these properties are satisfied, there exists a family $\mathcal{T}$ of $\lfloor (1-\eps)n\rfloor$ disjoint rainbow independent sets in $(B_1\cup \dots\cup B_n)\setminus R$ of total size $|E(\mathcal{T})|\ge (1-\eta-\nu)n^2$.

Let us assume then that \ref{property:diamond} and \ref{property:star} hold, and let $\mathcal{T}$ be a family of $\lfloor (1-\eps)n\rfloor$ disjoint rainbow independent sets in $(B_1\cup \dots\cup B_n)\setminus R$ maximising $|E(\mathcal{T})|$. Let us suppose for contradiction that $|E(\mathcal{T})|< (1-\eta-\nu)n^2$. Then the set $U=(B_1\cup\dots\cup B_n)\setminus E(\mathcal{T})$ of elements of $B_1\cup\dots\cup B_n$ not covered by $\mathcal{T}$ has size $|U|=n^2-|E(\mathcal{T})|>(\eta+\nu)n^2$ and therefore
\[|U\setminus R|=|U|-|R|>(\eta+\nu)n^2-\sum_{c=1}^{n}|B_c\cap R|\overset{\ref{property:diamond}}{\ge}(\eta+\nu)n^2-n\cdot (\eta+\gamma)n> (2\nu/3) n^2.\]

Now, we again define families $\mathcal{T}^{(0)},\dots,\mathcal{T}^{(r)}$ of disjoint rainbow independent sets as in Lemma~\ref{lem:switching-reduction} by setting $\mathcal{T}^{(0)}=\mathcal{T}$ and taking $\mathcal{T}^{(i)}$ to be the $\ell$-reduction of the family $\mathcal{T}^{(i-1)}$ for $i=1,\dots,r$.
We observe that there cannot be an element $e\in U\setminus R=(B_1\cup\dots\cup B_n)\setminus (E(\mathcal{T})\cup R)$ such that $T+e$ is a rainbow independent set for some $T\in \mathcal{T}^{(i)}$ for some index $i\in\{0,\dots,r\}$ (since otherwise by Lemma~\ref{lem:switching-reduction} we would contradict the maximality of $|E(\mathcal{T})|$).
Now, for each $i=0,\dots,r-1$, we apply Lemma~\ref{lem:threeoutcomes-reservoir}  to the family $\mathcal{T}^{(i)}$, and conclude that one of the three statements \ref{res-outcome:1} to  \ref{res-outcome:3} holds.

If for some index $i\in \{0,\dots,r-1\}$ statement \ref{res-outcome:1} holds, then there is a set $T\in \mathcal{T}^{(i)}$ with $|T|\le \mu n=\nu n/3$. For every colour $c$ not appearing on $T$ we must have $|(U\setminus R)\cap B_c|\le |T|$ (otherwise we could find an element $e\in (U\setminus R)\cap B_c$ such that $T+e$ is a rainbow independent set). For the $|T|$ colours $c$ appearing on $T$, we have $|(U\setminus R)\cap B_c|\le n$, and so we can conclude
\[|U\setminus R|=\sum_{c=1}^n |(U\setminus R)\cap B_c|\le n\cdot |T|+|T|\cdot n=|T|\cdot 2n\le (2\nu/3) n^2.\]
Since this contradicts $|U\setminus R|> (2\nu/3) n^2$, statement \ref{res-outcome:1} cannot hold for any index $i\in \{0,\dots,r-1\}$.

If for some index $i\in \{0,\dots,r-1\}$ statement \ref{res-outcome:2} holds, then there is a set $T\in \mathcal{T}^{(i)}$ and some colour $c\in [n]$ such that there are strictly more than $\mathrm{deg}_{\mathcal{A}(\mathcal{T}^{(i)})}(c)-|B_c\cap R|+\lceil\eps n\rceil$ elements $e\in B_c\setminus (E(\mathcal{T}^{(i)})\cup R)$ such that $T+e$ is a rainbow independent set. By our above observation we have $e\in E(\mathcal{T})$ and hence $e\in B_c\cap (E(\mathcal{T})\setminus E(\mathcal{T}^{(i)}))$ for all of these elements $e$, so
\begin{equation}\label{eqn:BccapET}
|B_c\cap (E(\mathcal{T})\setminus E(\mathcal{T}^{(i)}))|=|(B_c\cap E(\mathcal{T}))\setminus (B_c\cap E(\mathcal{T}^{(i)}))|> \mathrm{deg}_{\mathcal{A}(\mathcal{T}^{(i)})}(c)-|B_c\cap R|+\lceil\eps n\rceil.
\end{equation}
On the other hand, $\mathrm{deg}_{\mathcal{A}(\mathcal{T}^{(i)})}(c)=|\mathcal{T}|-|B_c\cap E(\mathcal{T}^{(i)})|=\lfloor (1-\eps)n\rfloor-|B_c\cap E(\mathcal{T}^{(i)})|$, and therefore
\begin{align*}
\mathrm{deg}_{\mathcal{A}(\mathcal{T}^{(i)})}(c)-|B_c\cap R|+\lceil\eps n\rceil&=\lfloor (1-\eps)n\rfloor-|B_c\cap E(\mathcal{T}^{(i)})|-|B_c\cap R|+\lceil\eps n\rceil\\
&= n-|B_c\cap R|-|B_c\cap E(\mathcal{T}^{(i)})|=|B_c\setminus R|-|B_c\cap E(\mathcal{T}^{(i)})|\\
&\ge |B_c\cap E(\mathcal{T})|-|B_c\cap E(\mathcal{T}^{(i)})|=|(B_c\cap E(\mathcal{T}))\setminus (B_c\cap E(\mathcal{T}^{(i)}))|.
\end{align*}
This contradiction to \eqref{eqn:BccapET} shows that \ref{res-outcome:2} does not hold for any index $i\in \{0,\dots,r-1\}$.

Thus, \ref{res-outcome:3} holds for all $i=0,\dots,r-1$, so $|E(\mathcal{T}^{(i+1)})|\leq |E(\mathcal{T}^{(i)})|-\gamma n^2$ for all $i=0,\dots,r-1$. This implies $|E(\mathcal{T}^{(r)})|\leq |E(\mathcal{T}^{(0)})|-r\cdot \gamma n^2\le n^2-r\cdot \gamma n^2<0$ (as $r>1/\gamma$), again a contradiction.
\end{proof}

\section{Covering: proof of Theorem~\ref{thm:main-covering}}
\label{sec:proof-main-covering}

In addition to Lemma~\ref{lem:inclusion-maximal-is-large}, the second key for proving Theorem~\ref{thm:main-covering} is to find a suitable notion of ``dense spots'' in the set of uncovered elements to make the approach outlined in Section~\ref{sec:outline} work. Our notion will be introduced in Definition~\ref{def:deadlock} below. In order to make this definition, we first need some preparations.

\begin{defn}\label{def:overcrowded}
    For a positive integer $k$, a subset $S$ of a matroid is called \emph{$k$-overcrowded} if for every subset $S'\su S$ we have
    \[|S\setminus S'|\ge k\cdot (\rk(S)-\rk(S')).\]
\end{defn}

Note that for any $k$-overcrowded $S$, the set $S$ is automatically also $k'$-overcrowded for any positive integer $k'<k$. It turns out that the union of two $k$-overcrowded subsets $S_1$ and $S_2$ of some matroid is automatically also $k$-overcrowded.

\begin{lemma}\label{lem:union-overcrowded}
    Let $k$ be a positive integer, and consider $k$-overcrowded subsets $S_1$ and $S_2$ of some matroid. Then their union $S_1\cup S_2$ is also $k$-overcrowded.
\end{lemma}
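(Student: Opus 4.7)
The plan is to translate the $k$-overcrowded condition into a statement about the set function $f(X) := |X| - k\cdot\rk(X)$. Submodularity of the matroid rank function is equivalent to $f$ being \emph{supermodular}, i.e.\ $f(A)+f(B)\le f(A\cup B)+f(A\cap B)$ for all subsets $A,B$ of the ground set. In this language, $S$ is $k$-overcrowded precisely when $f(S)\ge f(S')$ for every $S'\subseteq S$, so what must be shown becomes
\[
f(S_1\cup S_2)\;\ge\; f(S')\quad\text{for every } S'\subseteq S_1\cup S_2.
\]

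Given such an $S'$, my plan is to interpolate from $S'$ to $S_1\cup S_2$ in two steps, verifying that $f$ does not decrease at either step. First, I would apply supermodularity to $S'$ and $S_1$: since $S'\cap S_1\subseteq S_1$ and $S_1$ is $k$-overcrowded, one has $f(S_1)\ge f(S'\cap S_1)$, so
\[
f(S'\cup S_1)\;\ge\; f(S')+f(S_1)-f(S'\cap S_1)\;\ge\; f(S').
\]
Setting $T:=S'\cup S_1$, I would then apply supermodularity again, this time to $T$ and $S_2$. Since $T\cap S_2\subseteq S_2$ and $S_2$ is $k$-overcrowded, $f(S_2)\ge f(T\cap S_2)$, and hence
\[
f(S_1\cup S_2)\;=\;f(T\cup S_2)\;\ge\;f(T)+f(S_2)-f(T\cap S_2)\;\ge\;f(T)\;\ge\; f(S'),
\]
which is exactly the inequality required.

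I do not anticipate any serious obstacle here; the only subtlety is getting the direction of the $k$-overcrowded hypothesis right. Subsets of a $k$-overcrowded set need not themselves be $k$-overcrowded, so one must avoid trying to invoke the property on, say, $S_1\cap S_2$ directly. The two-step argument sidesteps this by only ever comparing $S_i$ with a subset of itself (namely $S'\cap S_1$ and then $T\cap S_2$), which is precisely the direction in which the hypothesis applies. Supermodularity is exactly the tool that turns each such comparison into the monotonicity statement that enlarging the current set by $S_i$ does not decrease $f$.
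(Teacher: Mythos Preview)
Your proof is correct and follows essentially the same approach as the paper's: both arguments apply the $k$-overcrowded hypothesis for $S_1$ to the subset $S'\cap S_1$ and for $S_2$ to the subset $(S'\cup S_1)\cap S_2$, together with two applications of rank submodularity to the pairs $(S',S_1)$ and $(S'\cup S_1,S_2)$. Your repackaging via the supermodular function $f(X)=|X|-k\cdot\rk(X)$ makes the two-step monotonicity structure more transparent, but the underlying computation is identical to the paper's direct chain of inequalities.
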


The proof of this lemma is fairly simple and can be found in Section~\ref{sec:simple-matroid-lemmas}, in which we collect the proofs of various simple matroid lemmas needed in this paper. Lemma~\ref{lem:union-overcrowded} now enables us to make the following definition.

\begin{defn}\label{def:deadlock}
    For a positive integer $k$, and a subset $U$ of a matroid, let the \emph{$k$-deadlock} $D_k(U)$ of $U$ be the unique inclusion-wise maximal $k$-overcrowded subset of $U$.
\end{defn}

Note that $D_k(U)\su U$, that the $k$-deadlock $D_k(U)$ is $k$-overcrowded, and that for every $k$-overcrowded set $S\su U$ we have $S\su D_k(U)$. Furthermore note that for any subset $U'\su U$ we have $D_k(U')\su D_k(U)$ and for any positive integer $k'<k$ we have $D_k(U)\su D_{k'}(U)$

By our next lemma, the $k$-deadlock of a set $U$ in a matroid can detect whether there exists a subset $S\su U$ with $|S|>k\cdot \rk(S)$. This is important, because the existence of such a subset $S$ is equivalent to $U$ not being decomposable into $k$ independent sets (see also the discussion below).

\begin{lemma}\label{lem:deadlock-detects-dense-spot}
    Let $k$ be a positive integer, and let $U$ be a subset of some matroid. If there exists a set $S\su U$ with $|S|>k\cdot \rk(S)$, then $S$ has a non-empty $k$-overcrowded subset, and so in particular we have $D_k(U)\ne \emptyset$ for the $k$-deadlock of $U$.
\end{lemma}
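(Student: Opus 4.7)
The plan is to show that the required non-empty $k$-overcrowded subset of $S$ can be obtained by a simple extremal argument: choose $T \subseteq S$ maximising the quantity $|T| - k\cdot\rk(T)$.

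First, I would observe that since $S$ itself satisfies $|S| - k\cdot\rk(S) > 0$, the maximum $\max_{T\subseteq S}(|T| - k\cdot\rk(T))$ is strictly positive. Pick any $T \subseteq S$ achieving this maximum. Then $|T| > k\cdot \rk(T) \ge 0$, so in particular $T$ is non-empty.

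Next I would verify that this maximiser $T$ is $k$-overcrowded. Given any subset $T'\subseteq T$, by the maximality of $T$ we have
\[
|T'| - k\cdot \rk(T') \le |T| - k\cdot \rk(T),
\]
which rearranges directly to
\[
|T\setminus T'| = |T| - |T'| \ge k\cdot(\rk(T)-\rk(T')),
\]
exactly the condition in Definition~\ref{def:overcrowded}. Hence $T$ is a non-empty $k$-overcrowded subset of $S$. Since $T\subseteq S\subseteq U$ and $D_k(U)$ contains every $k$-overcrowded subset of $U$ by Definition~\ref{def:deadlock}, we conclude $T \subseteq D_k(U)$, and so $D_k(U)\ne \emptyset$.

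I do not anticipate a serious obstacle here: the only subtlety is making sure the maximiser is non-empty, which is immediate from the hypothesis $|S| > k\cdot \rk(S)$, and ensuring that the inequality goes in the correct direction when comparing $T$ with $T'$ (in particular the case $T'=\emptyset$, where $|T'|-k\cdot\rk(T')=0$, is covered by the same inequality). The statement that $D_k(U)\neq \emptyset$ then follows with no further work from the maximality property of the deadlock.
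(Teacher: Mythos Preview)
Your proof is correct and has the same spirit as the paper's: both pick an extremal subset of $S$ and verify it is $k$-overcrowded. The only difference is the choice of extremal object: the paper takes an inclusion-minimal $S^*\subseteq S$ with $|S^*|>k\cdot\rk(S^*)$ and uses minimality to get $|S'|\le k\cdot\rk(S')$ for every proper $S'\subsetneq S^*$, whereas you maximise $|T|-k\cdot\rk(T)$ over all $T\subseteq S$. Your version is marginally slicker in that the defining inequality for $k$-overcrowdedness drops out in one line for all $T'\subseteq T$ (including $T'=T$) without a separate case, while the paper's version has the minor advantage of producing a minimal such set; neither difference matters here.
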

\begin{proof}
    Let $S^*\su S$ be a minimal subset of $S$ with the property that $|S^*|>k\cdot \rk(S^*)$ (such a subset exists, since $S$ itself has this property). Clearly, $S^*$ must be non-empty. Now, for any subset $S'\subsetneqq S^*$ we have $|S'|\le k\cdot \rk(S')$ and hence
    \[|S^*\setminus S'|=|S^*|-| S'|\ge k\cdot \rk(S^*)-k\cdot \rk(S')=k\cdot (\rk(S^*)-\rk(S')).\]
    Noting that this inequality also holds for $S'= S^*$, we can conclude that  $S^*$ is $k$-overcrowded. The second part of the conclusion follows by noting that $S^*\su D_k(U)$ (since $D_k(U)$ contains all $k$-overcrowded subsets of $S\su U$).
\end{proof}

Due to the classical theorem of Edmonds \cite{edmonds1965matroiddecomp} that a matroid can be decomposed into $k$ independent sets if and only if $|S|\le k\cdot \rk(S)$ for all subsets of the matroid, we obtain the following corollary of Lemma~\ref{lem:deadlock-detects-dense-spot}.

\begin{corollary}\label{cor:deadlock-detects-decomposability}
    Let $k$ be a positive integer, and let $U$ be a subset of some matroid. If $D_k(U)= \emptyset$, then $U$ can be decomposed into $k$ independent sets.
\end{corollary}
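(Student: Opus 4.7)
The plan is to invoke Edmonds' matroid partition theorem together with Lemma~\ref{lem:deadlock-detects-dense-spot} via a contrapositive argument. Recall that Edmonds' theorem says that a subset $U$ of a matroid can be decomposed (partitioned) into $k$ independent sets if and only if $|S| \le k \cdot \rk(S)$ for every subset $S \su U$. The necessity of this condition is clear, since in any decomposition of $U$ into $k$ independent sets, every subset $S$ must itself decompose into $k$ independent subsets, each of size at most $\rk(S)$.

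First I would assume, for contradiction, that $D_k(U) = \emptyset$ but that $U$ cannot be decomposed into $k$ independent sets. By Edmonds' theorem, the failure of such a decomposition produces some subset $S \su U$ violating the inequality, i.e., with $|S| > k \cdot \rk(S)$. Then I would apply Lemma~\ref{lem:deadlock-detects-dense-spot} directly: this lemma tells us that the existence of such a subset $S$ forces $S$ to contain a non-empty $k$-overcrowded subset, and hence forces $D_k(U) \neq \emptyset$. This contradicts our assumption that $D_k(U) = \emptyset$, completing the proof.

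There is essentially no obstacle here — all the real work has already been absorbed into Definitions~\ref{def:overcrowded} and~\ref{def:deadlock} and into the statement of Lemma~\ref{lem:deadlock-detects-dense-spot}. The only thing to be a little careful about is citing Edmonds' theorem in the right form, namely the version that applies to a subset $U$ of a matroid (which one obtains by restricting the matroid to $U$, since the rank function of the restriction agrees with the rank function of the ambient matroid on subsets of $U$). Thus the proof is just a two-line deduction: invoke the contrapositive of Lemma~\ref{lem:deadlock-detects-dense-spot} to conclude that $|S| \le k \cdot \rk(S)$ for all $S \su U$, and then apply Edmonds' theorem.
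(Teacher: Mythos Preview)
Your proposal is correct and matches the paper's proof essentially verbatim: the paper also applies the contrapositive of Lemma~\ref{lem:deadlock-detects-dense-spot} to conclude $|S|\le k\cdot \rk(S)$ for all $S\su U$, and then invokes Edmonds' matroid decomposition theorem.
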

\begin{proof}
    If $D_k(U)= \emptyset$, then by Lemma~\ref{lem:deadlock-detects-dense-spot} we have $|S|\le k\cdot \rk(S)$ for all subsets $S\su U$. Thus, by Edmonds' matroid decomposition theorem \cite{edmonds1965matroiddecomp}, $U$ can be decomposed into $k$ independent sets.
\end{proof}

Aharoni and Berger \cite[Theorem 8.9]{aharoni2006intersection} proved that in order to decompose a coloured matroid into few rainbow independent sets it is sufficient to assume that the underlying (uncoloured) matroid is decomposable into few independent sets and that every colour appears only few times. In fact, they actually proved a more general result for the intersection of two arbitrary matroids, but we state their result only in the special case relevant to us (where one of the two matroids is a partition matroid described by the colouring).

\begin{theorem}[\cite{aharoni2006intersection}]\label{thm:aharoni-berger}
    Let $k$ be a positive integer, and $U$ a subset of a coloured matroid, such that $U$ can be decomposed into $k$ independent sets and such that every colour appears at most $k$ times in $U$. Then $U$ can be decomposed into $2k$ rainbow independent sets.
\end{theorem}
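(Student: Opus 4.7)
The plan is to induct on $k$, finding at each step a single rainbow independent set $S\su U$ whose removal reduces the problem to the same theorem with parameter $k-1$. The base case $k=1$ is immediate: $U$ is both independent and rainbow and so forms a single rainbow independent set (hence at most two).

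For the inductive step, by Edmonds' matroid partition theorem~\cite{edmonds1965matroiddecomp}, decomposability of $U\setminus S$ into $k-1$ independent sets is equivalent to the rank-density condition $|T'|\le(k-1)\cdot\rk(T')$ for every $T'\su U\setminus S$. Similarly, the colour-multiplicity condition on $U\setminus S$ reduces to demanding that $S$ contain an element of each colour class appearing exactly $k$ times in $U$. If a rainbow independent set $S$ meeting both demands can be found, the induction hypothesis applied to $U\setminus S$ yields a decomposition into $2(k-1)$ rainbow independent sets, which together with $S$ cover $U$ with at most $2k-1\le 2k$ rainbow independent sets, as required.

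The hard part is the existence of such an $S$. This is a Hall-type existence question for common independent sets of the underlying matroid $M$ and the partition matroid $P$ induced by the colouring: one seeks a common $(M,P)$-independent set simultaneously meeting a prescribed family of demand sets, namely the saturated colour classes (each of which must be hit at least once) and the tight subsets $T\su U$ witnessing the rank-density condition (which must be ``broken'' in a suitably independent way so that the remaining piece becomes $(k-1)$-coverable). Because $M\cap P$ is not itself a matroid, classical matroid augmentation is inadequate, and this is precisely where all the difficulty resides. The proof of Aharoni and Berger~\cite{aharoni2006intersection} addresses this step by topological means, bounding the topological connectivity of the independence complex of $M\cap P$ and then invoking a topological Hall-type theorem in the Aharoni--Berger--Ziv framework. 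This topological input is the main obstacle; once it is in hand, the rest of the argument is elementary bookkeeping through Edmonds' partition theorem.
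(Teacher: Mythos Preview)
The paper does not give its own proof of this theorem: it is quoted from Aharoni and Berger~\cite{aharoni2006intersection} and used as a black box (see the sentence immediately preceding the theorem and the remark after it). So there is no proof in the paper to compare your proposal against.

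Your proposal is, by your own account, not a self-contained argument either. You set up an inductive reduction and then explicitly defer the one substantive step --- the existence of a rainbow independent set $S$ that simultaneously hits every saturated colour class and spans every tight flat of $U$ --- to the topological connectivity machinery of~\cite{aharoni2006intersection}. That is an accurate diagnosis of where the difficulty lies, and your reformulation as a Hall-type existence problem for common independent sets of $M$ and the partition matroid is correct in spirit. But the proposal is a gloss on the structure of someone else's proof rather than a proof of your own; since the paper treats the result identically (as a citation, not a derivation), there is nothing further to compare.

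One incidental remark: your induction, if it goes through, would actually yield $2k-1$ rainbow independent sets rather than $2k$. This is because you are asking $S$ to reduce the covering parameter in \emph{both} matroids at once, which is a stronger demand than what the $k_1+k_2$ bound of Aharoni--Berger requires (there one only needs to drop the parameter in one matroid per step). Whether such an $S$ always exists is not obvious and would need its own justification; you should be aware that this is a slightly different (and potentially harder) existence statement than the one Aharoni--Berger's connectivity bound is usually deployed for.
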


We remark that this directly implies the result of Aharoni and Berger \cite{aharoni2006intersection} stated as Theorem~\ref{thm:covering-2n} in the introduction.

By combining Corollary~\ref{cor:deadlock-detects-decomposability} and Theorem~\ref{thm:aharoni-berger}, we see that a set $U$ in a coloured matroid can always be decomposed into $2k$ rainbow independent sets if $D_k(U)= \emptyset$ and every colour appears at most $k$ times in $U$. Our approach for proving Theorem~\ref{thm:main-covering} is to find a family $\mathcal{T}$ of $\lfloor (1+\lambda)n\rfloor$ disjoint rainbow independent sets (for some suitable parameter $\lambda$), such that the set of uncovered elements $U=(B_1\cup\dots\cup B_n)\setminus E(\mathcal{T})$ satisfies these two conditions, and then to decompose $U$ into few additional rainbow independent sets. In order to ensure these conditions for $U$, i.e.\ to ensure that $D_k(U)= \emptyset$ and that every colour appears only few times in $U$, we use switching arguments which exchange certain elements in a set $T\in \mathcal{T}$ by other elements. It is crucial to keep track of how $D_k(U)$ changes during these switching processes. The following lemma allows us to control this.

\begin{lemma}\label{lem:deadlock-lemma}
    Consider positive integers $k'<k$ and disjoint subsets $U$ and $U'$ of some  matroid such that $|U'|\le k-k'$. If $U'\cap \spn(D_{k'}(U))=\emptyset$, then we have $D_{k}(U)=D_{k}(U\cup U')$.
\end{lemma}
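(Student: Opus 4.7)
The plan is to prove the nontrivial inclusion $D_k(U\cup U')\subseteq D_k(U)$; the reverse inclusion $D_k(U)\subseteq D_k(U\cup U')$ is immediate from the monotonicity of $D_k$ recorded just after Definition~\ref{def:deadlock}, since $D_k(U)$ is a $k$-overcrowded subset of $U\subseteq U\cup U'$. Set $S^*:=D_k(U\cup U')$. Once I show $S^*\subseteq U$, it will follow that $S^*$ is a $k$-overcrowded subset of $U$, hence $S^*\subseteq D_k(U)$ by the maximality built into the definition of the $k$-deadlock, and the proof will be finished.

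To prove $S^*\cap U'=\emptyset$, I would first apply the defining $k$-overcrowded inequality for $S^*$ to its subset $S^*\cap U$. Since $S^*\cap U$ and $S^*\cap U'$ partition $S^*$, this yields $|S^*\cap U'|\ge k\cdot(\rk(S^*)-\rk(S^*\cap U))$. Combined with $|S^*\cap U'|\le|U'|\le k-k'<k$ and the integrality of rank, this forces $\rk(S^*\cap U)=\rk(S^*)$; in particular, every element of $S^*$ lies in $\spn(S^*\cap U)$.

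The heart of the argument is to show that $S^*\cap U$ is itself $k'$-overcrowded (as a subset of $U$), so that $S^*\cap U\subseteq D_{k'}(U)$. For any $S'\subseteq S^*\cap U$, I would start from the $k$-overcrowded inequality $|S^*\setminus S'|\ge k(\rk(S^*)-\rk(S'))$, split $S^*\setminus S'$ as $((S^*\cap U)\setminus S')\sqcup(S^*\cap U')$ (using $S'\subseteq U$ and $U\cap U'=\emptyset$), and rearrange, substituting $\rk(S^*)=\rk(S^*\cap U)$, to obtain
\[|(S^*\cap U)\setminus S'|\ge k(\rk(S^*\cap U)-\rk(S'))-|S^*\cap U'|.\]
A short case-split closes the argument: when $\rk(S')=\rk(S^*\cap U)$ the required bound $|(S^*\cap U)\setminus S'|\ge k'(\rk(S^*\cap U)-\rk(S'))$ is trivial; when $\rk(S')<\rk(S^*\cap U)$ it follows because $(k-k')(\rk(S^*\cap U)-\rk(S'))\ge k-k'\ge|S^*\cap U'|$. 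Hence $S^*\cap U\subseteq D_{k'}(U)$, so $\spn(S^*\cap U)\subseteq\spn(D_{k'}(U))$. Combined with the previous paragraph, every element of $S^*$ lies in $\spn(D_{k'}(U))$, and therefore $S^*\cap U'\subseteq U'\cap\spn(D_{k'}(U))=\emptyset$, as required.

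The main obstacle is the middle step that $S^*\cap U$ is $k'$-overcrowded: it is precisely here that the quantitative hypothesis $|U'|\le k-k'$ (rather than merely $|U'|<k$, which would suffice for the rank equality in the second paragraph) is used, as each unit of rank slack $\rk(S^*\cap U)-\rk(S')$ pays off exactly $k-k'$ units of deficit arising from discarding the elements of $S^*\cap U'$.
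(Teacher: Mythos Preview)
Your proof is correct and follows essentially the same approach as the paper's. Both arguments set $S^*=D_k(U\cup U')$, show that $S^*\cap U$ (which the paper writes as $D\setminus U'$) is $k'$-overcrowded via the same case split on whether $\rk(S')$ equals $\rk(S^*\cap U)$, deduce $S^*\cap U\subseteq D_{k'}(U)$, and then use a rank equality to conclude $S^*\subseteq\spn(D_{k'}(U))$, forcing $S^*\cap U'=\emptyset$; the only cosmetic difference is that you establish the rank equality $\rk(S^*\cap U)=\rk(S^*)$ up front (by applying $k$-overcrowdedness to $S'=S^*\cap U$), whereas the paper derives it after the $k'$-overcrowdedness step (by applying $k$-overcrowdedness to $S'=D_{k'}(U)\cap S^*$).
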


We postpone the proof of Lemma~\ref{lem:deadlock-lemma} to Section~\ref{sec:simple-matroid-lemmas}. Our next result shows that, in the setting relevant for Theorem~\ref{thm:main-covering}, we can find a family of $\lfloor (1+\lambda)n\rfloor$ rainbow independent sets so that the set of uncovered elements has an empty $\lfloor \lambda n\rfloor$-deadlock, as long as $n$ is sufficiently large with respect to $\lambda>0$.

\begin{lemma}\label{lem:collectionwithnodeadlock}
 For any $0<\lambda<1$ the following holds for any sufficiently large $n$ (sufficiently large with respect to $\lambda$).  In any coloured rank-$n$ matroid with colour classes $B_1,\dots,B_n$, such that each of the sets $B_i$ for $i=1,\dots,n$ is a basis, there is a family $\mathcal{T}$ of $\lfloor(1+\lambda)n\rfloor$ disjoint rainbow independent sets such that, setting $U=(B_1\cup\dots\cup B_n)\setminus E(\mathcal{T})$, we have $D_{\lfloor \lambda n\rfloor}(U)=\emptyset$.
\end{lemma}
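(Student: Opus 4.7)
The plan is to choose $\mathcal{T}$ via a lexicographic minimisation and show that any lex-minimiser already satisfies $D_{\lfloor \lambda n\rfloor}(U) = \emptyset$. Set $k_0 = \lfloor \lambda n\rfloor$ and $k_i = k_0 - 2i$ for $i=0,1,\ldots,s$, where $s$ is chosen so that $k_s$ is still of order $\lambda n$ (say $s = \lfloor \lambda n/4\rfloor$, giving $k_s \ge \lambda n/2$), and fix a constant $\nu > 0$ small enough relative to $\lambda$. Among all families of $\lfloor(1+\lambda)n\rfloor$ disjoint rainbow independent sets, I choose $\mathcal{T}$ to lexicographically minimise
\[
\bigl(|D_{k_0}(U)|, |D_{k_1}(U)|, \ldots, |D_{k_s}(U)|, -|E(\mathcal{T})|\bigr).
\]
The final coordinate is a tie-break that forces $\mathcal{T}$ to be inclusion-wise maximal: any family $\mathcal{S}$ with $E(\mathcal{T})\subsetneq E(\mathcal{S})$ has $U_{\mathcal{S}}\subsetneq U_{\mathcal{T}}$, hence each $D_{k_j}(U_{\mathcal{S}})\subseteq D_{k_j}(U_{\mathcal{T}})$ and $|E(\mathcal{S})|>|E(\mathcal{T})|$, improving the lex vector and contradicting the choice of $\mathcal{T}$. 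By Lemma~\ref{lem:inclusion-maximal-is-large} we therefore have $|E(\mathcal{T})|\ge (1-\nu)n^2$ and $|U|\le \nu n^2$; combined with the $k_j$-overcrowded bound $|D_{k_j}(U)|\ge k_j\cdot \rk(D_{k_j}(U))$, this gives the key inequality $\rk(D_{k_j}(U))\le \nu n^2/k_s\le 2\nu n/\lambda$ for every $j$.

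Assume for contradiction that $D_{k_0}(U)\neq\emptyset$, and fix $e\in D_{k_0}(U)$. Because the deadlocks are nested with $D_{k_0}(U)\subseteq D_{k_1}(U)\subseteq\cdots$, the element $e$ lies in every $D_{k_j}(U)$. The non-decreasing ranks $\rk(D_{k_0}(U))\le\cdots\le \rk(D_{k_s}(U))$ are all bounded by $2\nu n/\lambda$, so by pigeonhole there exists $i\in\{0,\ldots,s-1\}$ with $\rk(D_{k_{i+1}}(U))-\rk(D_{k_i}(U))$ arbitrarily small; fix such an $i$ and set $V=\spn(D_{k_{i+1}}(U))$. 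The switching operation is: find $T\in\mathcal{T}$ and $F\subseteq T$ with $|F|\le 2$ and $F\cap V=\emptyset$ such that $(T+e)\setminus F$ is a rainbow independent set, then replace $T$ in $\mathcal{T}$ by $(T+e)\setminus F$, so that $U$ becomes $U''=(U-e)\cup F$. For every $j\le i$ we have $k_{j+1}\ge k_{i+1}$, so $D_{k_{j+1}}(U-e)\subseteq D_{k_{j+1}}(U)\subseteq D_{k_{i+1}}(U)$ and hence $F\cap\spn(D_{k_{j+1}}(U-e))\subseteq F\cap V=\emptyset$. Lemma~\ref{lem:deadlock-lemma} with $k=k_j$, $k'=k_{j+1}$ (and $|F|\le 2=k-k'$) then yields $D_{k_j}(U'')=D_{k_j}(U-e)\subsetneq D_{k_j}(U)$, strict because $e\in D_{k_j}(U)$. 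In particular $|D_{k_0}(U'')|<|D_{k_0}(U)|$, contradicting lex-minimality already in the very first coordinate.

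The main obstacle is producing $T$ and $F$ for the switch. By the inclusion-wise maximality of $\mathcal{T}$, no $T+e$ is a rainbow independent set, so each $T\in\mathcal{T}$ falls into one of two cases. In \emph{Case A}, $T\cap B_{c(e)}=\emptyset$ and $T+e$ is dependent, so there is a unique circuit $C\subseteq T+e$ containing $e$, and $F=\{x\}$ works for any $x\in (C\setminus\{e\})\setminus V$. In \emph{Case B}, $T\cap B_{c(e)}=\{y\}$, in which case $F$ must include $y$ (hence forcing $y\notin V$), and if additionally $(T-y)+e$ is dependent, one further element $x$ from its circuit with $x\notin V$ is required. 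The delicate task is to rule out the pathological scenario in which every candidate forced removal lies inside $V$. To do this I would combine two ingredients: the absolute rank bound $\rk(V)\le 2\nu n/\lambda$ from the first paragraph, which gives $|E(\mathcal{T})\cap V|\le n\cdot \rk(V)=O(\nu n^2/\lambda)$, negligible compared to $|E(\mathcal{T})|\ge(1-\nu)n^2$, so most sets $T\in\mathcal{T}$ have the vast majority of their elements outside $V$; and the small rank gap between $D_{k_i}(U)$ and $D_{k_{i+1}}(U)$ from the pigeonhole, combined with the matroid exchange properties of Lemmas~\ref{lem:injecttoBi} and~\ref{lem:injecttST} applied relative to the basis $B_{c(e)}$, to show that at least one $T$ admits the required $F$. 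Making this counting-plus-exchange argument precise, and choosing $\nu$ small enough as a function of $\lambda$, is the technical heart of the proof.
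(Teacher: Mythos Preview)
Your overall framework---lexicographic minimisation over the chain $|D_{k_0}(U)|,|D_{k_1}(U)|,\dots$ with a tie-break forcing inclusion-maximality, then invoking Lemma~\ref{lem:inclusion-maximal-is-large} to bound $|U|$---is exactly the paper's approach. The deadlock-switching via Lemma~\ref{lem:deadlock-lemma} is also correctly set up. The gap is in the step you yourself flag as ``the technical heart'': producing $T$ and $F$ for your \emph{fixed} element $e\in D_{k_0}(U)$.

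With $e$ fixed, the natural bound on the number of bad $T$'s (those with $e\in\spn(T\cap V)$) is $\sum_{T}\mathbb{1}[e\in\spn(T\cap V)]\le \sum_T|T\cap V|\le |V|\le \rk(V)\cdot n$, which can exceed $|\mathcal T|$ as soon as $\rk(V)\ge 2$; neither the small rank gap at index $i$ nor Lemmas~\ref{lem:injecttoBi}--\ref{lem:injecttST} rescue this, because those tools compare $T$ to a basis and say nothing about whether the forced exchange element lands outside $V$. The paper resolves this by \emph{not} fixing $e$: it first runs the pigeonhole (multiplicatively, finding $j$ with $r_{j-2}\le(1+\lambda/10)r_j$; your additive version with $\nu$ small enough to force a zero gap also works), then lets $e$ range over a maximal independent set $W\subseteq D_j(U)$ of size $r_j$, and counts \emph{pairs} $(e,T)\in W\times\mathcal T$. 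The key estimate becomes
\[
\sum_{T\in\mathcal T}|W\cap\spn(T\cap V)|\le \sum_{T\in\mathcal T}\rk(T\cap V)\le \sum_{T\in\mathcal T}|T\cap V|\le |V|\le r_{j-2}\,n,
\]
where the first inequality uses that $W$ is independent. Comparing this (plus the $\le r_j r_{j-2}$ colour-conflict pairs) against the total $r_j\lfloor(1+\lambda)n\rfloor$ and using $r_{j-2}\approx r_j\le\lambda n/5$ yields a good pair $(e,T)$ with $(T\cap V)+e$ rainbow independent; then Lemma~\ref{lem:rainbow-independence-simple-adding} (rather than your circuit casework) gives $F\subseteq T\setminus V$ with $|F|\le 2$. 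The lex contradiction is at coordinate $j$, not $0$---and that is enough, since your own Lemma~\ref{lem:deadlock-lemma} argument already shows the earlier coordinates do not increase.
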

\begin{proof}
Let $\nu=\lambda^2/10$, and recall that we are assuming that $n$ is sufficiently large with respect to $\lambda$ (in particular, large enough that the statement in Lemma~\ref{lem:inclusion-maximal-is-large} holds for $\lambda$ and $\nu=\lambda^2/10$). Let $z$ be the unique positive integer with $\lambda n -4< z\le \lambda n$ which is divisible by $4$. Since $z\le \lfloor \lambda n\rfloor$, we then have $D_{\lfloor \lambda n\rfloor}(U)\su D_{z}(U)$ for any subset $U$ of the matroid. Thus, it suffices to find a family $\mathcal{T}$ of $\lfloor(1+\lambda)n\rfloor$ disjoint rainbow independent sets such that we have $D_{z}(U)=\emptyset$ for $U=(B_1\cup\dots\cup B_n)\setminus E(\mathcal{T})$.

To this end, let us choose a family $\mathcal{T}$ of $\lfloor (1+\lambda)n\rfloor$ disjoint rainbow independent sets such that, when denoting the set of uncovered elements by $U=(B_1\cup\dots\cup B_n)\setminus E(\mathcal{T})$, the $(z/4+2)$-tuple
\begin{equation}\label{eq:tuple-lexicographic}
\Big(|D_z(U)|,|D_{z-2}(U)|,|D_{z-4}(U)|,\dots,|D_{z/2}(U)|, |U|\Big)\in \mathbb{Z}^{(z/4)+2}
\end{equation}
is lexicographically minimised (this means that among all families of $\lfloor (1+\lambda)n\rfloor$ disjoint rainbow independent sets, we choose $\mathcal{T}$ such that size of the $z$-deadlock $|D_z(U)|$ of $U$ is minimised, and among all such choices such that $|D_{z-2}(U)|$ is minimised, and so on).

Then, in particular, the set $E(\mathcal{T})$ is inclusion-wise maximal among all families $\mathcal{T}$ of $\lfloor (1+\lambda)n\rfloor$ disjoint rainbow independent sets. Indeed, if there was a family $\mathcal{T}^*$ of $\lfloor (1+\lambda)n\rfloor$ disjoint rainbow independent sets with $E(\mathcal{T})\subsetneqq E(\mathcal{T}^*)$, then for its set of uncovered elements $U^*=(B_1\cup\dots\cup B_n)\setminus E(\mathcal{T}^*)$ we would have $U^*\subsetneqq U$. This means that $|D_k(U^*)|\le |D_k(U)|$ for all positive integers $k$, and furthermore $|U^*| < |U|$, contradicting our choice of $\mathcal{T}$ to lexicographically minimise \eqref{eq:tuple-lexicographic}. Thus, $E(\mathcal{T})$ is indeed inclusion-wise maximal among all families $\mathcal{T}$ of $\lfloor (1+\lambda)n\rfloor$ disjoint rainbow independent sets.

Consequently, by Lemma~\ref{lem:inclusion-maximal-is-large}, we have $|E(\mathcal{T})|\ge (1-\nu)n^2$ and hence
\[|U|=n^2-|E(\mathcal{T})|\le \nu n^2=\lambda^2 n^2/10.\]

    Let us assume for contradiction that $D_z(U)\ne \emptyset$, then we have $\rk(D_z(U))\ge 1$ (as each element of $B_1\cup \dots\cup B_n$ forms an independent set by itself). Defining $r_k=\rk(D_k(U))$ for $k\in\{z,z-2,z-4,\dots,z/2\}$, we then have $1\le r_z\le r_{z-2}\le r_{z-4}\le \dots\le r_{z/2}\le n$. Since $(1+\lambda/10)^{z/4}\ge (1+\lambda/10)^{\lambda n/4-1}>n$ (by our assumption that $n$ is sufficiently large with respect to $\lambda$), there is an index $j\in\{z/2+2,z/2+4,\dots ,z\}$ such that $r_{j-2}\le (1+\lambda/10) r_j$. Note that, as $D_{j}(U)$ is $j$-overcrowded, by the condition in Definition~\ref{def:overcrowded} with $S=D_{j}(U)$ and $S'=\emptyset$ we have $|D_j(U)|\ge j\cdot \rk(D_{j}(U))$, and, hence
    \begin{align}\label{eq:rankDsmall}
r_{j}=\rk(D_{j}(U))\leq \frac{|D_{j}(U)|}{j}\leq \frac{|U|}{j}\leq \frac{\lambda^2 n^2/10}{z/2+2}\leq \frac{\lambda^2 n^2/10}{\lambda n/2}=\frac{\lambda n}{5}.
\end{align}

Now, let $S=\spn D_{j-2}(U)$, and note that then $\rk(S)=\rk(D_{j-2}(U))=r_{j-2}$ and
\begin{equation}\label{eq:S-is-small}
|S|=\sum_{c=1}^{n} |B_c\cap S|\le \sum_{c=1}^{n} \rk(S)=r_{j-2}\cdot n,
\end{equation}
using that each of the sets $B_1,\dots,B_n$ is independent. Furthermore, let $W\su D_j(U)\su U$ be a maximal independent set in $D_j(U)$, so that $|W|=\rk(D_{j}(U))=r_j$. 

As discussed in the proof outline in Section \ref{sec-outline-covering}, our goal is to move some element $e\in W\su D_j(U)\su U$ into one of the sets $T\in \mathcal{T}$, while removing up to two other elements from $T$ in order to maintain rainbowness and independence. However, we need to ensure that these removed elements do not lie in $S=\spn D_{j-2}(U)$. The following claim will allow us to find an element $e\in W$ and a set $T\in \mathcal{T}$ such that this is possible.

\begin{claim}
    There is some $e\in W$ and $T\in \mathcal{T}$ such that $(T\cap S)+e$ is a rainbow independent set.
\end{claim}
\claimproofstart
We need to show that there is a pair $(e,T)\in W\times \mathcal{T}$ such that $e\not\in \spn(T\cap S)$ and the colour $c(e)$ does not appear on $T\cap S$. Note that in total there are $|W|\cdot |\mathcal{T}|=r_j\cdot \lfloor (1+\lambda)n\rfloor$ pairs $(e,T)\in W\times \mathcal{T}$. Our proof proceeds via a counting argument, bounding above the number of pairs $(e,T)\in W\times \mathcal{T}$ with $e\in \spn(T\cap S)$, as well as the number of pairs $(e,T)\in W\times \mathcal{T}$ such that $c(e)$ appears on $T\cap S$.

First, the number of pairs $(e,T)\in W\times \mathcal{T}$ with $e\in \spn(T\cap S)$ can be bounded above by 
\[\sum_{T\in \mathcal{T}}|W\cap \spn(T\cap S)|\le \sum_{T\in \mathcal{T}}\rk(\spn(T\cap S))=\sum_{T\in \mathcal{T}}\rk(T\cap S)\le \sum_{T\in \mathcal{T}}|T\cap S|\le |S|\overset{\eqref{eq:S-is-small}}{\le} r_{j-2}\cdot n,\]
where in the first step we used that $W$ is an independent set.

Furthermore, there are at most $|W|\cdot r_{j-2}=r_j\cdot r_{j-2}$ pairs $(e,T)\in W\times \mathcal{T}$ such that the colour $c(e)$ of $e$ appears on $T\cap S$. Indeed, for each element $e\in W$, we have $|S\cap B_{c(e)}|\le \rk(S)=r_{j-2}$ (recalling that $B_{c(e)}$ is an independent set) and therefore there can be at most $r_{j-2}$ different sets $T\in \mathcal{T}$ with $T\cap S\cap B_{c(e)}\ne \emptyset$ (meaning that $T\cap S$ contains an element of $B_{c(e)}$, i.e., an element of colour $c(e)$).

Thus, the number of pairs $(e,T)\in W\times \mathcal{T}$ with $e\not\in \spn(T\cap S)$ such that $c(e)$ does not appear on $T\cap S$ is at least
\begin{align*}
   r_j\cdot \lfloor (1+\lambda)n\rfloor-r_{j-2}\cdot n-r_j\cdot r_{j-2}&=r_j\cdot \lfloor (1+\lambda)n\rfloor-r_{j-2}\cdot (n+r_j)\\
   &\overset{\eqref{eq:rankDsmall}}{\ge} r_j\cdot (1+\lambda/2)n-r_{j-2}\cdot (n+\lambda n/5)\\
   &\ge r_j\cdot (1+\lambda/2)n-(1+\lambda/10)r_j \cdot (1+\lambda /5)n>0.\qedhere
\end{align*}
\claimproofend

As in the claim, let $e\in W$ and $T\in \mathcal{T}$ be chosen such that $(T\cap S)+e$ is a rainbow independent set. Now, we can form a rainbow independent set $T^*$ of the form $T^*=(T+e)\setminus F$ for some set $F\su T\setminus S$ of size $|F|\le 2$, by adding the element $e$ to the rainbow independent set $T$ and reinstating independence and resolving colouring conflicts by deleting a set $F\su T\setminus S$ consisting of at most two elements (it suffices to delete one element to ensure independence and one element to resolve a potential colouring conflict with $e$, more formally this follows from Lemma~\ref{lem:rainbow-independence-simple-adding} applied to the rainbow independent sets $(T\cap S)+e$ and $T$).

Now, let $\mathcal{T}^*$ be the family of $\lfloor (1+\lambda)n\rfloor$ disjoint rainbow independent sets obtained from $\mathcal{T}$ when replacing $T\in \mathcal{T}$ by $T^*=(T+e)\setminus F$. Then we have $E(\mathcal{T}^*)=(E(\mathcal{T})\setminus F)+e$ and the set $U^*=(B_1\cup \dots\cup B_n)\setminus E(\mathcal{T}^*)$ of elements not covered by $\mathcal{T}^*$ can be described as $U^*=(U\cup F)-e$.

    Now, for any integer $k\ge j$ we have
    \[D_k(U^*)=D_k((U\cup F)-e)\su D_k(U\cup F)=D_k(U),\]
    where in the last step we used Lemma~\ref{lem:deadlock-lemma}, recalling that $F\su T\setminus S=T\setminus \spn(D_{j-2}(U))$ and $|F|\le 2\le k-(j-2)$. Furthermore, for $k=j$ we have \[D_j(U^*)=D_j((U\cup F)-e)\subsetneqq D_j(U\cup F)=D_j(U),\]
    where the strict inclusion in the second step follows from the fact that $e\in D_j(U)\su D_j(U\cup F)$, but $e\not\in D_j((U\cup F)-e)$ as $e\not\in (U\cup F)-e$. This shows that for all positive integers $k\ge j$ we have $|D_k(U^*)|\le |D_k(U)|$, and for $k=j$ we even have $|D_k(U^*)|< |D_k(U)|$. This contradicts our choice of $\mathcal{T}$ to lexicographically minimise \eqref{eq:tuple-lexicographic}, recalling that $j\in\{z,z-2,z-4,\dots,z/2+2\}$. Thus, we have $D_z(U)=\emptyset$.
\end{proof}

Using Lemmas~\ref{lem:inclusion-maximal-is-large} and \ref{lem:collectionwithnodeadlock}, we can now prove Theorem~\ref{thm:main-covering}.

\begin{proof}[Proof of Theorem~\ref{thm:main-covering}]
We may assume that $0<\eps<1$. Let $\lambda=\eps/3$ and $\nu=\lambda^2/4=\eps^2/36$, and recall that we are assuming that $n$ is sufficiently large with respect to $\eps$ (in particular, large enough such that the statement in Lemma~\ref{lem:inclusion-maximal-is-large} holds for $\lambda=\eps/3$ and $\nu=\eps^2/36$, and the statement in Lemma~\ref{lem:collectionwithnodeadlock} holds for $\lambda=\eps/3$).

Consider a rank-$n$ matroid with $n$ given bases $B_1, \dots, B_n$ (we may assume that $B_1, \dots, B_n$ are disjoint by introducing additional copies of any element that appears in more than one of these bases). By restricting the matroid to the set $B_1\cup \dots\cup B_n$, and colouring each of the bases $B_1, \dots, B_n$ with a different colour, we obtain a coloured matroid. Using Lemma~\ref{lem:collectionwithnodeadlock}, let $\mathcal{T}$ be a family of $\lfloor (1+\lambda)n\rfloor$ disjoint rainbow independent sets such that, when denoting the set of uncovered elements by $U=(B_1\cup\dots\cup B_n)\setminus E(\mathcal{T})$, we have $D_{\lfloor \lambda n\rfloor}(U)=\emptyset$ and, subject to this, $\sum_{c=1}^n |B_c\cap U|^2$ is minimised.

Now, similarly as in the proof of Lemma~\ref{lem:collectionwithnodeadlock}, the set $E(\mathcal{T})$ is inclusion-wise maximal among all families $\mathcal{T}$ of $\lfloor (1+\lambda)n\rfloor$ disjoint rainbow independent sets (indeed, if there was such a family $\mathcal{T}^*$ with $E(\mathcal{T})\subsetneqq E(\mathcal{T}^*)$, then for its set of uncovered elements $U^*=(B_1\cup\dots\cup B_n)\setminus E(\mathcal{T}^*)$ we would have $U^*\subsetneqq U$ and therefore $D_{\lfloor \lambda n\rfloor}(U^*)\su D_{\lfloor \lambda n\rfloor}(U)=\emptyset$ and $\sum_{c=1}^n |B_c\cap U^*|^2<\sum_{c=1}^n |B_c\cap U|^2$). Consequently, by Lemma~\ref{lem:inclusion-maximal-is-large}, we have $|E(\mathcal{T})|\ge (1-\nu)n^2$ and hence
\[|U|=n^2-|E(\mathcal{T})|\le \nu n^2=\lambda^2 n^2/4.\]
In particular, there can be at most $\lambda n/2$ different colours $c\in \{1,\dots,n\}$ with $|B_c\cap U|\ge \lambda n/2$.

Furthermore, as the set $D_{\lfloor \lambda n\rfloor-1}(U)\su U$ is $(\lfloor \lambda n\rfloor-1)$-overcrowded, it follows from Definition~\ref{def:overcrowded} that (similarly to \eqref{eq:rankDsmall} in the proof of Lemma~\ref{lem:collectionwithnodeadlock})
\[
\rk(D_{\lfloor \lambda n\rfloor-1}(U))\le \frac{\nu n^2}{\lfloor \lambda n\rfloor-1}\le \frac{\lambda^2 n^2/4}{\lambda n/2}= \frac{\lambda n}{2}.
\]

\begin{claim}\label{claim:colours-rare-in-U}
For every colour $c=1,\dots,n$, we have $|B_c\cap U|\le \lambda n$.
\end{claim}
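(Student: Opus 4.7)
The plan is to argue by contradiction, exploiting the minimality of $\mathcal{T}$ with respect to $\sum_{c'=1}^n |B_{c'}\cap U|^2$. Suppose some colour $c$ has $|B_c\cap U|>\lambda n$. The aim is to perform the switching operation depicted in part b) of Figure~\ref{fig:alterationsforcovering}: pick some $T\in\mathcal{T}$ not containing colour $c$, together with $e\in B_c\cap U$ and $f\in T$ so that replacing $T$ by $T-f+e$ is still a rainbow independent set, producing a new family $\mathcal{T}^*$ with uncovered set $U^*=(U-e)+f$ that still satisfies $D_{\lfloor\lambda n\rfloor}(U^*)=\emptyset$ while strictly decreasing $\sum_{c'}|B_{c'}\cap U|^2$. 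A suitable $T$ exists because at most $n$ sets of $\mathcal{T}$ contain colour $c$, leaving at least $\lfloor\lambda n\rfloor$ candidates.

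With such a $T$ fixed, I would apply Lemma~\ref{lem:injecttST} to the independent sets $S=B_c\cap U$ (independent as a subset of the basis $B_c$) and $T$. In case \ref{injopt:1} there is $e\in B_c\cap U$ with $T+e$ independent, hence rainbow independent since $T$ misses colour $c$; then replacing $T$ by $T+e$ yields $U^*=U-e\subseteq U$, which preserves $D_{\lfloor\lambda n\rfloor}(U^*)=\emptyset$ and strictly reduces the sum, contradicting minimality. So we are in case \ref{injopt:2} and obtain an injection $\phi:B_c\cap U\to T$ such that $T-\phi(x)+x$ is independent for each $x\in B_c\cap U$.

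The main obstacle is the next step: finding $x\in B_c\cap U$ for which $f:=\phi(x)$ simultaneously satisfies $f\notin\spn(D_{\lfloor\lambda n\rfloor-1}(U))$ (so that Lemma~\ref{lem:deadlock-lemma} applies to preserve emptiness of the $\lfloor\lambda n\rfloor$-deadlock) and $|B_{c(f)}\cap U|<\lambda n/2$ (so that the switch strictly decreases the objective). I plan to handle this by a double counting argument. The number of $x$ with $\phi(x)\in\spn(D_{\lfloor\lambda n\rfloor-1}(U))$ is bounded, by the injectivity of $\phi$ and independence of $T$, by $|T\cap\spn(D_{\lfloor\lambda n\rfloor-1}(U))|\le\rk(D_{\lfloor\lambda n\rfloor-1}(U))\le\lambda n/2$. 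The number of $x$ whose image $\phi(x)$ has a colour $c'$ with $|B_{c'}\cap U|\ge\lambda n/2$ is at most the number of such frequent colours (already shown to be at most $\lambda n/2$, since $|U|\le\lambda^2n^2/4$) times $1$, because the rainbow set $T$ contains at most one element of each colour. Hence at most $\lambda n$ elements $x\in B_c\cap U$ are bad, and since $|B_c\cap U|>\lambda n$, a good $x$ exists.

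Finally, setting $e=x$, $f=\phi(x)$, and $T^*=T-f+e$, one obtains the desired family $\mathcal{T}^*$ and uncovered set $U^*=(U-e)+f$. Rainbow independence of $T^*$ follows from the choice of $\phi$ and the absence of colour $c$ from $T$. For the deadlock, since $U-e\subseteq U$ implies $\spn(D_{\lfloor\lambda n\rfloor-1}(U-e))\subseteq\spn(D_{\lfloor\lambda n\rfloor-1}(U))$, the condition $f\notin\spn(D_{\lfloor\lambda n\rfloor-1}(U-e))$ holds, and Lemma~\ref{lem:deadlock-lemma} (with $k=\lfloor\lambda n\rfloor$, $k'=\lfloor\lambda n\rfloor-1$, $U'=\{f\}$) gives $D_{\lfloor\lambda n\rfloor}(U^*)=D_{\lfloor\lambda n\rfloor}(U-e)\subseteq D_{\lfloor\lambda n\rfloor}(U)=\emptyset$. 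The change in $\sum_{c'}|B_{c'}\cap U|^2$ is $2(|B_{c(f)}\cap U|-|B_c\cap U|+1)$, which is strictly negative for $n$ sufficiently large with respect to $\lambda$ by our bounds $|B_{c(f)}\cap U|<\lambda n/2$ and $|B_c\cap U|>\lambda n$. This contradicts the minimality of $\mathcal{T}$, completing the proof.
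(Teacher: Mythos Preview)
Your proof is correct and follows essentially the same approach as the paper's: assume a colour is too frequent, pick a set $T\in\mathcal{T}$ missing that colour, apply Lemma~\ref{lem:injecttST}, rule out case \ref{injopt:1}, and in case \ref{injopt:2} count to find a switch $x\mapsto\phi(x)$ that avoids $\spn(D_{\lfloor\lambda n\rfloor-1}(U))$ and lands in an infrequent colour, then contradict the minimisation. The only cosmetic difference is that in case \ref{injopt:1} the paper invokes inclusion-wise maximality of $E(\mathcal{T})$ directly, whereas you stay within the minimisation framework (observing that $U^*=U-e$ still has empty deadlock and smaller objective); both work.
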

\claimproofstart
Assume, for contradiction, that there is some colour $c'\in [n]$ with $|B_{c'} \cap U|> \lambda n$. Let $T\in \mathcal{T}$ be a set on which colour $c'$ does not appear, and apply Lemma~\ref{lem:injecttST} to the independent sets $S=B_{c'}\cap U=B_{c'}\setminus E(\mathcal{T})$ and $T$. If there was an element $x\in B_{c'}\setminus E(\mathcal{T})$ as in \ref{injopt:1} in Lemma~\ref{lem:injecttST}, then $T+x$ would be a rainbow independent set, and so one could obtain a family  $\mathcal{S}$ of $\lfloor (1+\lambda)n\rfloor$ rainbow independent sets with $E(\mathcal{S})=E(\mathcal{T})+x$ from the family $\mathcal{T}$ simply by adding $x$ to the set $T\in \mathcal{T}$. This would contradict $E(\mathcal{T})$ being inclusion-wise maximal, and so \ref{injopt:2} in Lemma~\ref{lem:injecttST} must hold. This means that there is an injection $\phi: B_{c'}\cap U \to T$ such that for every element $x\in B_{c'}\cap U$, the set $T-\phi(x)+x$ is a rainbow independent set.

We claim that there must be an element $x\in B_{c'}\cap U$ with $\phi(x)\not\in \spn(D_{\lfloor \lambda n\rfloor-1}(U))$ and $|B_{c(\phi(x))}\cap U|< \lambda n/2$.  Indeed, since $T$ is an independent set, we have $|T\cap \spn(D_{\lfloor \lambda n\rfloor-1}(U))|\le \rk(\spn(D_{\lfloor \lambda n\rfloor-1}(U)))=\rk(D_{\lfloor \lambda n\rfloor-1}(U))\le \lambda n/2$, so there are at most $\lambda n/2$ elements $x\in B_{c'}\cap U$ with $\phi(x)\in \spn(D_{\lfloor \lambda n\rfloor-1}(U))$. Furthermore, recalling that there are at most $\lambda n/2$ colours $c\in [n]$ with $|B_{c}\cap U|\ge \lambda n/2$, there are at most $\lambda n/2$ elements $x\in B_{c'}\cap U$ with $|B_{c(\phi(x))}\cap U|\ge \lambda n/2$. Thus, since $|B_{c'}\cap U|> \lambda n$, there must indeed be an element $x\in B_{c'}\cap U$ with $\phi(x)\not\in \spn(D_{\lfloor \lambda n\rfloor-1}(U))$ and $|B_{c(\phi(x))}\cap U|< \lambda n/2$.

Now, let us consider the family $\mathcal{T}^*$ of $\lfloor (1+\lambda)n\rfloor$ rainbow independent sets obtained from $\mathcal{T}$ when replacing $T\in \mathcal{T}$ by $T-\phi(x)+x$. Note that for the set $U^*=(B_1\cup\dots\cup B_n)\setminus E(\mathcal{T}^*)$ we then have $U^*=U-x+\phi(x)$. In particular, we obtain $B_{c'}\cap U^*=(B_{c'}\cap U)-x$ and $B_{c(\phi(x))}\cap U^*=(B_{c(\phi(x))}\cap U)+\phi(x)$ (and $B_{c}\cap U^*=B_{c}\cap U$ for all other colours $c$).

Using $\phi(x)\not\in \spn(D_{\lfloor \lambda n\rfloor-1}(U))$, Lemma~\ref{lem:deadlock-lemma} applied to $U'=\{\phi(x)\}$, $U$, $k=\lfloor \lambda n\rfloor$ and $k'=\lfloor \lambda n\rfloor-1$ yields
\[D_{\lfloor \lambda n\rfloor}(U^*)=D_{\lfloor \lambda n\rfloor}(U-x+\phi(x))\su D_{\lfloor \lambda n\rfloor}(U\cup\{\phi(x)\})=D_{\lfloor \lambda n\rfloor}(U)=\emptyset.\]
Furthermore, we have
\begin{align*}
&\sum_{c=1}^n |B_c\cap U|^2-\sum_{c=1}^n |B_c\cap U^*|^2=|B_{c'}\cap U|^2-|B_{c'}\cap U^*|^2-|B_{c(\phi(x))}\cap U^*|^2+|B_{c(\phi(x))}\cap U|^2\\
&\qquad\qquad=|B_{c'}\cap U|^2-(|B_{c'}\cap U|-1)^2-(|B_{c(\phi(x))}\cap U|+1)^2+|B_{c(\phi(x))}\cap U|^2>0,
\end{align*}
using $|B_{c(\phi(x))}\cap U|<\lambda n/2<\lambda n-1<|B_{c'}\cap U|-1$ and the general inequality $a^2+b^2>(a-1)^2+(b+1)^2$ for all real numbers $a,b$ with $a-b>1$. In other words, we obtain $\sum_{c=1}^n |B_c\cap U^*|^2<\sum_{c=1}^n |B_c\cap U|^2$,  contradicting our choice of $\mathcal{T}$.
\claimproofend

Now, as $D_{\lfloor\lambda n\rfloor}(U)=\emptyset$, by Corollary~\ref{cor:deadlock-detects-decomposability}, the set $U$ can be decomposed into $\lfloor \lambda n\rfloor$ independent sets. Furthermore, by Claim~\ref{claim:colours-rare-in-U}, every colour appears at most $\lfloor \lambda n\rfloor$ times in $U$. Thus, by Theorem~\ref{thm:aharoni-berger}, the set $U=(B_1\cup\dots\cup B_n)\setminus E(\mathcal{T})$ can be decomposed into $2\cdot \lfloor \lambda n\rfloor=2\cdot \lfloor \eps n/3\rfloor\le (2\eps/3)n$ rainbow independent sets. Together with the $\lfloor (1+\lambda)n\rfloor\le (1+\eps/3)n$ rainbow independent sets in the family $\mathcal{T}$, this forms a decomposition of $B_1\cup\dots\cup B_n$ into at most $(1+\eps)n$ rainbow independent sets.
Each of these rainbow independent sets can greedily be extended to a rainbow basis. Thus, we obtain a covering of $B_1\cup\dots\cup B_n$ by at most $(1+\eps)n$ rainbow bases (i.e., transversal bases).
\end{proof}

\section{Packing: proof of Lemma~\ref{keylem:completingbases-new} and Theorem~\ref{thm:main-packing}}

\label{sec:finish-packing}

Let us start this section by showing how Theorem~\ref{thm:main-packing} can be deduced from Lemmas~\ref{keylem:almostalmost-new} and \ref{keylem:completingbases-new}. The rest of this section will then be devoted to proving Lemma~\ref{keylem:completingbases-new} (recall that we already proved Lemma~\ref{keylem:almostalmost-new} in Section~\ref{sec:proof-almostalmost-new}).

\begin{proof}[Proof of Theorem~\ref{thm:main-packing}]
We may assume without loss of generality that $\eps<1/10$. Given $0<\eps<1/10$, let $\sigma=\sigma(\eps)>0$ and $L=L(\eps)>0$ be as in Lemma~\ref{keylem:completingbases-new}. Now, choose $0<\nu<1/20$ such that $3\nu\cdot (\ln(1/\nu)+1)<\sigma/L$, define $\eta=\eps+\nu$, and assume that $n$ is sufficiently large with respect to the other parameters.

Consider a rank-$n$ matroid and $n$ bases $B_1, \dots, B_n$ of it (we may assume that $B_1, \dots, B_n$ are disjoint by introducing additional copies of any element that appears in more than one of these bases). By restricting the matroid to the set $B_1\cup \dots\cup B_n$, and colouring each of the bases $B_1, \dots, B_n$ with a different colour, we obtain a coloured matroid. Our goal is to show that there is a family of $\lceil (1-\eps)n \rceil$ disjoint rainbow bases in $B_1\cup \dots\cup B_n$.

Now, let $R\su B_1\cup \dots\cup B_n$ be a set of elements drawn independently at random with probability $\eta$, then $R$ satisfies the statements in Lemmas~\ref{keylem:almostalmost-new} and \ref{keylem:completingbases-new} with high probability. In particular, we can fix an outcome of $R$ satisfying both of these statements.

By the statement in Lemma~\ref{keylem:almostalmost-new}, we can find a family $\mathcal{T}$ of $\lceil (1-\eps)n\rceil$ disjoint rainbow independent sets in $(B_1\cup \dots\cup B_n)\setminus R$ such that $|E(\mathcal{T})|\geq (1-\eta-\nu)n^2> \frac{3}{4}n^2$ (note that a priori the statement in Lemma~\ref{keylem:almostalmost-new} gives such a family $\mathcal{T}$ consisting of $\lfloor (1-\eps)n\rfloor$ sets, but in case $\lfloor (1-\eps)n\rfloor\ne \lceil (1-\eps)n\rceil$, we can add a copy of the empty set to the family $\mathcal{T}$, or partition one of the sets $T\in \mathcal{T}$ into two subsets).

Let $m=\lceil (1-\eps)n\rceil\cdot n$ be the number of elements that a family of $\lceil (1-\eps)n\rceil$ disjoint rainbow bases would have. By our assumption that $n$ is sufficiently large, we have $m\le (1-\eps+\nu)n^2$ and therefore $m-|E(\mathcal{T})|\le (1-\eps+\nu)n^2-(1-\eta-\nu)n^2=(\eta -\eps+2\nu)n^2=3\nu n^2$. Furthermore, note that for every family $\mathcal{S}$ of $\lceil (1-\eps)n\rceil$ disjoint rainbow independent sets we have $|E(\mathcal{S})|\le \lceil (1-\eps)n\rceil\cdot n=m$.

Now, let $\mathcal{S}$ be a family of $\lceil (1-\eps)n\rceil$ disjoint rainbow independent sets in $B_1\cup \dots\cup B_n$ maximising $|E(\mathcal{S})|$ subject to the constraint
\[
|E(\mathcal{S})\cap R|\leq \sum_{i=|E(\mathcal{T})|}^{|E(\mathcal{S})|-1}L\cdot \ln\Big(\frac{n^2}{m-i}\Big).
\]
The family $\mathcal{T}$ satisfies this constraint (as $|E(\mathcal{T})\cap R|=0$), so $\mathcal{S}$ is well-defined and we have $|E(\mathcal{S})|\ge |E(\mathcal{T})|> \frac{3}{4}n^2$.

If $|E(\mathcal{S})|=m$, then $\mathcal{S}$ must be a family of $\lceil (1-\eps)n\rceil$ disjoint rainbow bases, finishing the proof of Theorem~\ref{thm:main-packing}. Assume, then, for contradiction that $|E(\mathcal{S})|<m$. Note that then we have
\[|E(\mathcal{S})\cap R|\le \sum_{i=|E(\mathcal{T})|}^{|E(\mathcal{S})|-1}L\cdot \ln\Big(\frac{n^2}{m-i}\Big)=L\cdot \sum_{j=m-|E(\mathcal{S})|+1}^{m-|E(\mathcal{T})|} \ln\Big(\frac{n^2}{j}\Big)\le L\cdot \sum_{j=2}^{\lfloor 3\nu n^2\rfloor} \ln\Big(\frac{n^2}{j}\Big)\le L \cdot \frac{\sigma n^2}{L}= \sigma n^2,\]
using that
\[\sum_{j=2}^{\lfloor 3\nu n^2\rfloor} \ln\Big(\frac{n^2}{j}\Big)\le \int_{1}^{3\nu n^2} \ln\Big(\frac{n^2}{x}\Big) \operatorname{d}x=\Big[x\ln\Big(\frac{n^2}{x}\Big)+x\Big]_{x=1}^{3\nu n^2}\le 3\nu \ln(1/\nu)\cdot n^2 +3\nu n^2\le \frac{\sigma n^2}{L}.\]
Thus, we can apply Lemma~\ref{keylem:completingbases-new} to the family $\mathcal{S}$ (also recalling that $\frac{3}{4}n^2< |E(\mathcal{S})|<m=\lceil (1-\eps)n\rceil\cdot n$), and conclude that there is a family $\mathcal{S}'$ of $\lceil (1-\eps)n\rceil$ disjoint rainbow independent sets in $B_1\cup \dots\cup B_n$ with $|E(\mathcal{S}')|>|E(\mathcal{S})|$ and
\begin{align*}
|E(\mathcal{S}')\cap R|&\leq |E(\mathcal{S})\cap R|+L \cdot \ln\Big(\frac{n^2}{\lceil (1-\eps)n\rceil\cdot n-|E(\mathcal{S})|}\Big)=|E(\mathcal{S})\cap R|+L \cdot \ln\Big(\frac{n^2}{m-|E(\mathcal{S})|}\Big)\\
&\le \sum_{i=|E(\mathcal{T})|}^{|E(\mathcal{S})|-1}L\cdot \ln\Big(\frac{n^2}{m-i}\Big)+L \cdot \ln\Big(\frac{n^2}{m-|E(\mathcal{S})|}\Big)\le \sum_{i=|E(\mathcal{T})|}^{|E(\mathcal{S}')|-1}L\cdot \ln\Big(\frac{n^2}{m-i}\Big).
\end{align*}
This contradicts our choice of the family $\mathcal{S}$.
\end{proof}

It remains to prove Lemma~\ref{keylem:completingbases-new}. The following lemma characterises a property of the random set $R$ needed in the proof (as we will show later, the statement in Lemma~\ref{keylem:completingbases-new} holds for any subset $R\su B_1\cup \dots\cup B_n$ satisfying property \ref{property:spade} in the lemma below with $\gamma$ chosen sufficiently small with respect to $\eps$).

\begin{lemma}\label{lem:reservoir-extension-2}
For any $0<\eta<1$ and $\gamma>0$, the following holds for any coloured rank-$n$ matroid with colour classes $B_1,\dots,B_n$, such that each of the sets $B_i$ for $i=1,\dots,n$ is a basis. Let $R\su B_1\cup \dots\cup B_n$ be a set of elements drawn independently at random with probability $\eta$. Then, with high probability (more precisely, with probability tending to $1$ as $n\to \infty$ with $\eta$ and $\gamma$ fixed), the following holds.
\begin{enumerate}[label = \emph{($\spadesuit$)}]
\item\labelinthm{property:spade} For every independent set $T\su B_1\cup \dots\cup B_n$, and every colour subset $C\su [n]$, there are at least $\eta\cdot (n-|T|)\cdot |C|-\gamma n^2$ elements $e\in \bigcup_{c\in C} (B_c\cap R)$ such that $T+e$ is independent.
\end{enumerate}
\end{lemma}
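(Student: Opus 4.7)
The plan is to mirror the proof of Lemma~\ref{lem:reservoir-extension} almost verbatim, with the roles of ``in $R$'' and ``outside $R$'' swapped (hence the factor $\eta$ in place of $1-\eta$). The starting point is to define, for each independent set $T\su B_1\cup \dots\cup B_n$ and each colour subset $C\su [n]$, the set
\[E_{T,C}=\{e\in \textstyle\bigcup_{c\in C} B_c \midusedassuchthat T+e\text{ is independent}\}.\]
I would then observe that $|E_{T,C}|\ge (n-|T|)\cdot |C|$: for each $c\in C$, since $B_c$ is independent, $|B_c\cap \spn(T)|\le \rk(\spn(T))=|T|$, so at least $n-|T|$ elements of $B_c$ lie outside $\spn(T)$ and thus extend $T$ to a larger independent set; summing over the $|C|$ colours gives the bound.

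Next, since the elements of $B_1\cup\dots\cup B_n$ are placed into $R$ independently with probability $\eta$, the random variable $|E_{T,C}\cap R|$ is a sum of at most $|E_{T,C}|\le n^2$ independent Bernoulli variables, with
\[\E[|E_{T,C}\cap R|]=\eta\cdot |E_{T,C}|\ge \eta\cdot (n-|T|)\cdot |C|.\]
Property \ref{property:spade} for the pair $(T,C)$ is precisely the event $|E_{T,C}\cap R|\ge \eta(n-|T|)|C|-\gamma n^2$. By a Chernoff bound (e.g.\ \cite[Theorem A.1.4]{alon-spencer}),
\[\P\Big[|E_{T,C}\cap R|\le \eta(n-|T|)|C|-\gamma n^2\Big]\le \P\Big[|E_{T,C}\cap R|\le \E[|E_{T,C}\cap R|]-\gamma n^2\Big]\le e^{-2\gamma^2 n^2}.\]

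Finally, I would take a union bound over all pairs $(T,C)$. There are at most $\sum_{t=0}^{n}\binom{n^2}{t}\le n^{2n}$ choices for the independent set $T\su B_1\cup \dots\cup B_n$ and only $2^n$ colour subsets $C\su [n]$, so the probability that property \ref{property:spade} fails for some $(T,C)$ is at most $n^{2n}\cdot 2^n\cdot e^{-2\gamma^2 n^2}=o(1)$, which is exactly the desired conclusion.

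I do not anticipate a serious obstacle: the argument is a direct adaptation of the proof of Lemma~\ref{lem:reservoir-extension}, the only slightly delicate point being the lower bound $|E_{T,C}|\ge (n-|T|)|C|$, which uses that each $B_c$ is a basis (so $|B_c\cap \spn(T)|\le |T|$). Everything else is a standard Chernoff plus union bound computation.
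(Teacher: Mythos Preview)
Your proof is correct and takes essentially the same approach as the paper, which simply notes that the argument is completely analogous to that of Lemma~\ref{lem:reservoir-extension} (and also remarks that one could alternatively deduce the result by applying Lemma~\ref{lem:reservoir-extension} to the complementary random set $(B_1\cup\dots\cup B_n)\setminus R$).
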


\begin{proof}
The proof is completely analogous to the proof of Lemma~\ref{lem:reservoir-extension}. Alternatively, the lemma here can be deduced directly from the statement of Lemma~\ref{lem:reservoir-extension} applied to the random set $(B_1\cup \dots\cup B_n)\setminus R$ (in which elements are drawn with probability $1-\eta$).
\end{proof}

The relevance of property \ref{property:spade} is actually that it implies property \ref{property:triangle} in the following lemma. Roughly speaking, this property states that, for any reasonably large colour subset $C\su [n]$, the union $\bigcup_{c\in C} (B_c\cap R)$ has large rank, even after deleting up to $\gamma n^2$ elements from $R$.

\begin{lemma}\label{lem:reservoir-big-span}
For $0<\eta<1$ and $\gamma, \eps'>0$ with $(\eps')^2\eta>2\gamma$, consider a coloured rank-$n$ matroid with colour classes $B_1,\dots,B_n$ and a subset $R\su B_1\cup \dots\cup B_n$ satisfying \emph{\ref{property:spade}} above. Then the following holds.
\begin{enumerate}[label = \emph{($\blacktriangle$)}]
\item \labelinthm{property:triangle} For any subset $Q\su R$ of size $|Q|\ge |R|-\gamma n^2$ and any colour subset $C\su [n]$ of size $|C|\ge \eps' n$ we have $\rk(\bigcup_{c\in C} (B_c\cap Q))\ge (1-\eps')n$.
\end{enumerate}
\end{lemma}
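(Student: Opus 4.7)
My plan is to prove the contrapositive via a direct maximality/pigeonhole argument. Suppose for contradiction that $\rk(\bigcup_{c\in C}(B_c\cap Q))<(1-\eps')n$ for some $Q\su R$ with $|Q|\ge |R|-\gamma n^2$ and some $C\su[n]$ with $|C|\ge \eps' n$. Let $T\su \bigcup_{c\in C}(B_c\cap Q)$ be a maximum-size independent subset, so that $|T|=\rk(\bigcup_{c\in C}(B_c\cap Q))<(1-\eps')n$, which gives $n-|T|>\eps' n$. The set $T$ is also an independent set in the full matroid, so I can feed it into property \ref{property:spade}.

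Applying \ref{property:spade} to this $T$ and $C$ produces at least $\eta\cdot(n-|T|)\cdot|C|-\gamma n^2 > (\eps')^2\eta n^2-\gamma n^2$ elements $e\in \bigcup_{c\in C}(B_c\cap R)$ such that $T+e$ is independent. The key observation is that no such $e$ can lie in $\bigcup_{c\in C}(B_c\cap Q)$: if it did, then $T+e$ would be a strictly larger independent subset of $\bigcup_{c\in C}(B_c\cap Q)$ (note $e\notin T$ automatically since $T+e$ is independent as a multi-set), contradicting the maximality of $T$. Hence all of these elements belong to $\bigcup_{c\in C}(B_c\cap R)\setminus \bigcup_{c\in C}(B_c\cap Q)\su R\setminus Q$.

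The contradiction then comes from comparing the two bounds on $|R\setminus Q|$: on the one hand $|R\setminus Q|=|R|-|Q|\le \gamma n^2$ by hypothesis, while on the other hand we have just produced strictly more than $(\eps')^2\eta n^2-\gamma n^2$ elements in $R\setminus Q$, and the assumption $(\eps')^2\eta>2\gamma$ gives $(\eps')^2\eta n^2-\gamma n^2>\gamma n^2$.

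There is no real obstacle here; the proof is essentially one application of \ref{property:spade} combined with the maximality of $T$, and the only care needed is to track the strict inequalities so that the parameter assumption $(\eps')^2\eta>2\gamma$ is used correctly. The only mild subtlety is the observation that an element $e\in \bigcup_{c\in C}(B_c\cap R)$ with $T+e$ independent (and with $T$ maximum in $\bigcup_{c\in C}(B_c\cap Q)$) is forced into $R\setminus Q$, but this is immediate from the definition of the rank of a subset of the matroid.
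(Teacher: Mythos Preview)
Your proof is correct and essentially identical to the paper's own argument: both take a maximal independent subset $T$ of $\bigcup_{c\in C}(B_c\cap Q)$, apply \ref{property:spade} to $T$ and $C$, and observe that the resulting elements must all lie in $R\setminus Q$, contradicting $|R\setminus Q|\le\gamma n^2$ via the hypothesis $(\eps')^2\eta>2\gamma$.
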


\begin{proof}
    Suppose for contradiction that $|\rk(\bigcup_{c\in C} (B_c\cap Q))|< (1-\eps')n$, and let $T$ be a maximal independent subset of $\bigcup_{c\in C} (B_c\cap Q)$. Then $|T|=|\rk(\bigcup_{c\in C} (B_c\cap Q))|<(1-\eps')n$, and so by \ref{property:spade} there are at least $\eta\cdot (n-|T|)\cdot |C|-\gamma n^2\ge \eta\cdot \eps'n\cdot \eps'n-\gamma n^2>\gamma n^2$ elements $e\in \bigcup_{c\in C} (B_c\cap R)$ such that $T+e$ is independent. By the maximality of $T$, for each of these elements $e$, we have $e\not\in\bigcup_{c\in C} (B_c\cap Q)$. Thus, there are more than $\gamma n^2$ elements in $\bigcup_{c\in C} (B_c\cap R)\setminus \bigcup_{c\in C} (B_c\cap Q)\su R\setminus Q$, and we must have $|R\setminus Q|>\gamma n^2$, contradicting our assumptions on $Q$.
\end{proof}

We also need the following lemma, stating that if we can make two different switches to an independent set, then we can also combine these switches in certain ways.

\begin{lemma}\label{lem:double-switch}
    Let $T$ be an independent set in a matroid and let $x,x',q,q'$ be elements of the matroid with $x,q\not\in T$ and $x',q'\in T$, such that $T-x'+x$ and $T-q'+q$ are independent sets with  $\spn(T-x'+x)=\spn(T-q'+q)=\spn(T)$. Then at least one of the following statements holds.
    \begin{enumerate}[label = \emph{(\alph{enumi})}]
\item\labelinthm{doubleswitchopt:1} The set $T-q'+x$ is independent with $\spn(T-q'+x)=\spn(T)$.
\item\labelinthm{doubleswitchopt:2} We have $x'\ne q'$, and the set $T-x'-q'+x+q$ is independent with $\spn(T-x'-q'+x+q)=\spn(T)$.
\end{enumerate}
\end{lemma}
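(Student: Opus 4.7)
The plan is to work entirely inside the flat $F = \spn(T)$, which is a matroid of rank $|T|$ in which $T$, $T-x'+x$, and $T-q'+q$ are all bases. Since any independent set of size $|T|$ contained in $F$ is automatically a basis of $F$ with span equal to $\spn(T)$, verifying \ref{doubleswitchopt:1} reduces to showing that $T-q'+x$ is independent, and verifying \ref{doubleswitchopt:2} reduces to showing $x' \neq q'$ together with the independence of $T-x'-q'+x+q$.

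If $x' = q'$, then $T-q'+x = T-x'+x$ is independent by hypothesis, so \ref{doubleswitchopt:1} holds immediately. Assume henceforth $x' \neq q'$, and suppose \ref{doubleswitchopt:1} fails, so that $T-q'+x$ is dependent. Since $T-q'$ is independent, this forces $x \in \spn(T-q')$. The goal is then to prove \ref{doubleswitchopt:2}. As a preliminary I would record that $T-x'-q'+x$ is independent: because $x' \neq q'$, we have $q' \in T-x'+x$, so $T-x'-q'+x \subseteq T-x'+x$, and the latter is independent by hypothesis.

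The main step will be the identity $\spn(T-x'-q'+x) = \spn(T-q')$. The inclusion $\spn(T-x'-q'+x) \subseteq \spn(T-q')$ is immediate from $x \in \spn(T-q')$ together with $T-x'-q' \subseteq T-q'$, and the reverse inclusion then follows from a rank count: both sides have rank $|T|-1$, since $T-x'-q'+x$ and $T-q'$ are independent of size $|T|-1$. From the hypothesis $\spn(T-q'+q) = \spn(T)$ and $q \notin T-q'$, one gets $q \in \spn(T) \setminus \spn(T-q') = \spn(T) \setminus \spn(T-x'-q'+x)$, so $T-x'-q'+x+q$ is independent, establishing \ref{doubleswitchopt:2}.

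I do not foresee any serious obstacle here. The only subtle point is that mere independence of $T-q'+q$ would not suffice to place $q$ outside $\spn(T-q')$; it is essential to use the strictly stronger hypothesis $\spn(T-q'+q) = \spn(T)$, and this is exactly what drives the final step of the argument.
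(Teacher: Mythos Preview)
Your proof is correct. Both your argument and the paper's first dispose of the case $x'=q'$ identically and then reduce the problem to showing independence (since containment in $\spn(T)$ plus the size count forces the span condition). The technical core differs: the paper applies the augmentation axiom to the pair $T-x'-q'+x$ and $T-q'+q$, obtaining an element $e\in\{q,x'\}$ that extends the former, and splits on which one; you instead assume \ref{doubleswitchopt:1} fails, deduce $x\in\spn(T-q')$, and use a rank argument to identify $\spn(T-x'-q'+x)=\spn(T-q')$, from which $q\notin\spn(T-x'-q'+x)$ follows. Both routes are equally elementary; the paper's is marginally more uniform in that it does not assume one option fails before establishing the other, while yours makes the role of the hypothesis $\spn(T-q'+q)=\spn(T)$ (rather than mere independence) slightly more explicit.
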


\begin{proof}
If $x'=q'$, then $T-q'+x=T-x'+x$ and so \ref{doubleswitchopt:1} holds by assumption. So let us assume that $x'\ne q'$. Now, note that $x\in \spn(T-x'+x)=\spn(T)$, and $q\in \spn(T-q'+q)=\spn(T)$, so we can conclude that $\spn(T-q'+x)\su \spn(T)$ and $\spn(T-x'-q'+x+q)\su\spn(T)$. Since we also have $|T-q'+x|=|T-x'-q'+x+q|=|T|$, it suffices to show that the set $T-q'+x$ is independent or the set $T-x'-q'+x+q$ is independent.

    Note that the set $T-x'-q'+x\su T-x'+x$ is independent and has size $|T-x'-q'+x|=|T|-1$, which is strictly smaller than the size $|T-q'+q|=|T|$ of the independent set $T-q'+q$. Thus, there exists an element $e\in (T-q'+q)\setminus (T-x'-q'+x)=\{q,x'\}$ such that $T-x'-q'+x+e$ is an independent set. If $e=q$, this means $T-x'-q'+x+q$ is independent, and if $e=x'$,  this means that $T-q'+x$ is independent.
\end{proof}

The following definition is crucial for our proof of Lemma~\ref{keylem:completingbases-new}. Roughly speaking, for distinct members $T_1,\dots,T_r$ of some family $\mathcal{T}$ of rainbow independent sets, it describes the situation that an element $e$ outside $T_1\cup\dots\cup T_r$ can be absorbed into $T_1,\dots,T_r$, by slightly modifying the sets $T_1,\dots,T_r$ in such a way that the total size $|T_1|+\dots+|T_{r}|$ increases (but without touching any elements in $E(\mathcal{T})\setminus (T_1\cup\dots\cup T_r)$ except $e$). See also the outline in Section~\ref{sec-outline-packing} for the motivation behind this definition. Our definition here is inspired by the notion of cascade-addability in \cite[Definition 2.8]{bucic2020halfway}. However, our notion of absorbability here is more flexible, and this additional flexibility is crucial in our proof of Lemma~\ref{keylem:completingbases-new} (the switching operations considered for cascade-addability in \cite[Definition 2.8]{bucic2020halfway} are not strong enough for our purposes).

\begin{defn}\label{def:absorbable}
    Consider a family of disjoint rainbow independent sets $\mathcal{T}$ in a coloured matroid, and let $U$ be the set of matroid elements outside of $E(\mathcal{T})$ (i.e., the uncovered set). For distinct $T_1,\dots,T_r\in \mathcal{T}$, we say that an element $e$ of the matroid with $e\not\in T_1\cup\dots\cup T_r$ is \emph{$(T_1,\dots,T_r)$-absorbable with respect to $\mathcal{T}$} if there are disjoint rainbow independent sets $T_1', \dots , T_{r}'\su U\cup T_1\cup\dots \cup T_{r}\cup \{e\}$ such that $|T'_i\setminus T_i|\le 3$ for $i=1,\dots r$, and $|T_1'|+\dots+|T_{r}'|\ge |T_1|+\dots+|T_{r}|+1$.

Furthermore, for a set $T_{r+1}\in \mathcal{T}\setminus \{T_1,\dots,T_r\}$, let us denote by
$\numberofabsorbable_{\mathcal{T}}(T_1,\dots,T_r,T_{r+1})$ the number of elements $e\in T_{r+1}$ such that $e$ is $(T_1,\dots,T_r)$-absorbable with respect to $\mathcal{T}$.
\end{defn}

Note that in Definition~\ref{def:absorbable} we do not demand that $e\in T_1'\cup \dots \cup T_{r}'$. However, in the setting of the proof of Lemma~\ref{keylem:completingbases-new} we will automatically have that $e\in T_1'\cup \dots \cup T_{r}'$. Indeed, if we have $e\not\in T_1'\cup \dots \cup T_{r}'$, then we can obtain a family of disjoint rainbow independent sets from $\mathcal{T}$ with strictly larger total size, simply replacing $T_1,\dots,T_r$ by $T'_1,\dots,T'_r$. In our proof of Lemma~\ref{keylem:completingbases-new}, this would trivially give the desired family $\mathcal{S}'$. So we may assume that $e\in T_1'\cup \dots \cup T_{r}'$ (then $T_1',\dots ,T_{r}'$ are not disjoint from the sets in $\mathcal{T}\setminus \{T_1,\dots,T_r\}$), which motivates the word ``absorbable''.

The following lemma states that under certain assumptions for a given $T\in \mathcal{T}$ we can find many $(T)$-absorbable elements (i.e.\ many elements satisfying the condition in Definition~\ref{def:absorbable} for $r=1$ and $T_1=T$).

\begin{lemma}\label{lem:many-1-absorbable}
    Let $\gamma>0$ and $\eps'>0$. Consider a family $\mathcal{T}$ of at most $(1-\eps')n$ rainbow independent sets in a coloured rank-$n$ matroid with colour classes $B_1,\dots,B_n$, such that each of the sets $B_i$ for $i=1,\dots,n$ is a basis, and let  $U=(B_1\cup \dots\cup B_n)\setminus E(\mathcal{T})$. Let $R\su B_1\cup \dots \cup B_n$ be a subset satisfying \emph{\ref{property:triangle}},
     and assume that $|E(\mathcal{T})\cap R|\le \gamma n^2$. Let $T\in \mathcal{T}$ have size $|T|<n$, and assume that there is no rainbow independent set $S\su U\cup T$ with $|S|>|T|$ and $|S\setminus T|\le 2$.

    Then there is a set of colours $C\su [n]$ of size $|C|\ge (1-\eps')n$, such that the following condition holds: For every colour $c\in C$, every element in $B_c\setminus \spn(T)$ is $(T)$-absorbable with respect to $\mathcal{T}$.
\end{lemma}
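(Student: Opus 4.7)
The plan is to split by whether the colour $c$ appears on $T$ and reduce the problem for colours in $c(T)$ to a matroid counting argument using $(\blacktriangle)$. Let $C^* := [n] \setminus c(T)$. For any $c \in C^*$ and $e' \in B_c \setminus \spn(T)$, the set $T + e'$ is rainbow (as $c$ is missing from $T$) and independent (as $e' \notin \spn(T)$); so $T_1' := T + e'$ satisfies $|T_1' \setminus T| = 1$, $|T_1'| = |T| + 1$ and $T_1' \su T \cup \{e'\}$, witnessing that $e'$ is $(T)$-absorbable. Hence $C^*$ will automatically lie in the desired good set $C$.

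For $c \in c(T)$, let $y \in T$ be the colour-$c$ element. The heart of the argument is the geometric identity
\[
\spn(T - y + e') \cap \spn(T) \;=\; \spn(T - y)
\quad \text{for every } e' \in B_c \setminus \spn(T).
\]
The inclusion $\supseteq$ is immediate. For $\subseteq$, if $z$ lay in the intersection but not in $\spn(T-y)$, then $T - y + z$ would be independent while $T + z$ is dependent (as $z \in \spn(T)$), forcing $y \in \spn(T - y + z)$ and hence $\spn(T - y + z) = \spn(T)$; then the unique circuit of $T - y + e' + z$ must contain both $z$ and $e'$, so $e' \in \spn(T - y + z) = \spn(T)$, contradicting $e' \notin \spn(T)$. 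The hypothesis applied to $S = T + x$ with $x \in U$ and $c(x) \in C^*$ gives $B_{c_1} \cap U \su \spn(T)$ for every $c_1 \in C^*$; set $W := \bigcup_{c_1 \in C^*}(B_{c_1} \cap U) \su \spn(T)$. Whenever some $z \in W$ satisfies $z \notin \spn(T-y)$, the identity yields $z \notin \spn(T - y + e')$, so $T_1' := T - y + e' + z$ is a rainbow independent set of size $|T|+1$ with $|T_1' \setminus T| = 2$, absorbing every $e' \in B_c \setminus \spn(T)$ simultaneously. Call $y \in T$ \emph{good} if $W \not\su \spn(T-y)$; every good $y$ contributes a good colour to $C$.

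To count bad $y$'s, pick an independent $I \su W$ with $|I| = \rk(W)$, and for each $x \in I$ let $C_x \su T + x$ be the fundamental circuit of $x$ relative to $T$. Standard matroid theory gives $y \in C_x \setminus \{x\} \iff x \notin \spn(T-y)$; hence the set of good $y$'s equals $\bigcup_{x \in I}(C_x \setminus \{x\}) \su T$, which is independent and has $I$ in its span, so has size at least $\rk(W)$. Thus the number of bad $y$'s is at most $|T| - \rk(W)$. To bound $\rk(W)$ from below, apply $(\blacktriangle)$ to $Q := R \cap U$ (which has $|Q| \ge |R| - \gamma n^2$ by the assumption $|E(\mathcal{T}) \cap R| \le \gamma n^2$) with $C = C^*$: provided $|C^*| \ge \eps' n$ we obtain $\rk(W) \ge \rk(\bigcup_{c_1 \in C^*}(B_{c_1} \cap Q)) \ge (1-\eps')n$, so the number of bad $y$'s is at most $|T| - (1-\eps')n \le \eps' n$; combined with $|C^*|$ good colours from $C^*$, the good colour set has size at least $|C^*| + |T| - \eps' n = (1-\eps')n$, as required.

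The main obstacle I expect is the regime $|T| > (1-\eps')n$, where $|C^*| < \eps' n$ and $(\blacktriangle)$ with $C = C^*$ no longer directly yields $\rk(W) \ge |T| - \eps' n$. I would handle this via a two-swap construction $T_1' = T - y - y^{**} + e' + z + w$: pick $y^{**} \in T \setminus \{y\}$ and $z \in B_{c(y^{**})} \cap U$ with $z \in \spn(T) \setminus \spn(T - y^{**})$, so that $T' := T - y^{**} + z$ is a rainbow independent variant of $T$ with the same colour set and span as $T$; then apply the single-swap analysis for $y$ relative to $T'$, using some $w \in B_{c^*} \cap U$ with $c^* \in C^*$. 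The additional freedom to vary $y^{**}$ across $T \setminus \{y\}$ (combined with the lower bound $|B_{c^*} \cap U| \ge \eps' n$ coming from $|\mathcal{T}| \le (1-\eps')n$) should suffice to keep the number of bad $c \in c(T)$ below $\eps' n$ even when $|C^*|$ is small.
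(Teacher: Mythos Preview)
Your single-swap argument is clean, but notice what it actually proves: you derive $W\subseteq\spn(T)$ from the ``no $S$'' hypothesis, so $\rk(W)\le|T|$, and then $(\blacktriangle)$ gives $\rk(W)\ge(1-\eps')n$. These two together force $|T|\ge(1-\eps')n$. In other words, your own chain of inequalities shows that the regime $|T|>(1-\eps')n$ you flag as ``the main obstacle'' is not a corner case---it is the \emph{entire} content of the lemma. The argument you give in full only covers the boundary $|T|=(1-\eps')n$, where indeed all $y$ are good.

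Your two-swap sketch is in the right spirit (and the paper's argument does produce witnesses of exactly the form $T-y-y^{**}+e'+z+w$), but the sketch lacks any counting mechanism. Varying $y^{**}$ over $T\setminus\{y\}$ does not by itself yield a bound on the number of bad $y$; you would need to show that for all but $\eps' n$ choices of $y$ there exist suitable $y^{**},z,w$, and no such argument is given.

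The paper resolves this by applying $(\blacktriangle)$ to a different colour set. Instead of $C^*=[n]\setminus c(T)$, fix a single $c^*\in C^*$ (this exists since $|T|<n$) and use the injection $\phi^*\colon B_{c^*}\cap U\to T$ from Lemma~\ref{lem:injecttST} (option \ref{injopt:1} would contradict the ``no $S$'' hypothesis). The image $\phi^*(B_{c^*}\cap U)$ has $\ge\eps' n$ distinct colours \emph{inside} $c(T)$, since $|B_{c^*}\cap U|\ge n-|\mathcal T|\ge\eps' n$. Apply $(\blacktriangle)$ to \emph{these} colours to get an independent set $Q^*\subseteq R\cap U$ of size $\ge(1-\eps')n$ with each $c(q)$ matching some $c(\phi^*(x))$; a second application of Lemma~\ref{lem:injecttST} to $Q^*$ and $T$ gives an injection $\phi'\colon Q^*\to T$ whose image supplies the colour set $C$ of size $\ge(1-\eps')n$. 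For each $c\in C$ the double-switch Lemma~\ref{lem:double-switch} then assembles the witness. The point is that the colour set fed to $(\blacktriangle)$ comes from an injection into $T$, so its size $\ge\eps' n$ regardless of $|T|$.
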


In other words, the condition on the set $C$ here states that every element $e$ such that $T+e$ is independent and $e$ has a colour in $C$ must be $(T)$-absorbable with respect to $\mathcal{T}$. The proof of this lemma relies on switching arguments like in the proof outline in Section~\ref{sec-outline-packing} (see also Figure~\ref{fig:alterationsforpacking}).

\begin{proof}[Proof of Lemma~\ref{lem:many-1-absorbable}]
    Since $|T|<n$, we can fix a colour $c^*\in [n]$ not appearing on $T$. Note that we have $B_{c^*}\cap E(\mathcal{T})\le |\mathcal T|\le (1-\eps')n$ and therefore $|B_{c^*}\cap U|\ge \eps' n$.

    Let us now apply Lemma~\ref{lem:injecttST} to the independent sets $B_{c^*}\cap U$ and $T$. If option \ref{injopt:1} there holds, there would be an element $x\in B_{c^*}\cap U$ such that $T+x$ is independent. But then  $T+x\su U\cup T$ is a rainbow independent set with $|T+x|>|T|$ and $|(T+x)\setminus T|< 2$. This would be a contradiction to our assumption.

    Thus, we must have option \ref{injopt:2} in Lemma~\ref{lem:injecttST}, so there is an injection $\phi^*: B_{c^*}\cap U\to T$ such that, for each $x\in B_{c^*}\cap U$, the set $T-\phi^*(x)+x$ is independent and $\spn(T-\phi^*(x)+x)=\spn(T)$. Note that the set $T-\phi^*(x)+x$ is also rainbow, since the colour $c^*$ of $x$ does not appear on $T$, and we have $T-\phi^*(x)+x\su U\cup T$.

    Now, let $C^*\su [n]$ be the set of colours appearing on the elements in the image of $\phi^*$, i.e., the elements $\phi^*(x)$ for $x\in B_{c^*}\cap U$. Then $|C^*|=|B_{c^*}\cap U|\ge \eps' n$.
    Furthermore, taking $Q=R\cap U$, note that $|Q|=|R|-|E(\mathcal{T})\cap R|\ge |R|-\gamma n^2$. Thus, by \ref{property:triangle} we have $\rk(\bigcup_{c\in C^*}(B_c \cap Q))\ge (1-\eps')n$. So we can find an independent set $Q^*\su \bigcup_{c\in C^*}(B_c \cap Q)$ of size $|Q^*|\ge (1-\eps')n$. By definition of $C^*$, for every element $q\in Q^*$ there exists an element $x\in B_{c^*}\cap U$ such that $\phi^*(x)$ has the same colour as $q$. Note that then the set $T-\phi^*(x)+x+q$ is rainbow.

    Now, let us again apply Lemma~\ref{lem:injecttST}, this time to the independent sets $Q^*$ and $T$. If option \ref{injopt:1} holds, there is some $q\in Q^*\su Q\su U$ such that the set $T+q$ is independent. This means that $q\not\in \spn(T)=\spn(T-\phi^*(x)+x)$, where $x\in B_{c^*}\cap U$ is such that $\phi^*(x)$ has the same colour as $q$. But then $T-\phi^*(x)+x+q\su U\cup T$ is a rainbow independent set of size $|T|+1$ with $|(T-\phi^*(x)+x+q)\setminus T|\le 2$. This again contradicts our assumption.

    Thus, again option \ref{injopt:2} in Lemma~\ref{lem:injecttST} must hold. So we obtain an injection $\phi': Q^*\to T$ such that, for each $q\in Q^*$, the set $T-\phi'(q)+q$ is independent and $\spn(T-\phi'(q)+q)=\spn(T)$. Finally, let $C\su [n]$ be the set of colours appearing on the image $\phi'(Q^*)\su T$. Then we indeed have $|C|=|\phi'(Q^*)|=|Q^*|\ge (1-\eps')n$.

    It remains to show that, for each $c\in C$, every element in $B_c\setminus \spn(T)$ is $(T)$-absorbable with respect to $\mathcal{T}$. So let $c\in C$, then there exists an element $q\in Q^*\su U$ such that its image $\phi'(q)$ has colour $c$. Let $x\in B_{c^*}\cap U$ be such that $\phi^*(x)$ has the same colour as $q$. Recall that $T-\phi^*(x)+x$ is an independent set with $\spn(T-\phi^*(x)+x)=\spn(T)$, and that $T-\phi'(q)+q$ is an independent set with $\spn(T-\phi'(q)+q)=\spn(T)$. Thus, by Lemma~\ref{lem:double-switch}, we have that $T-\phi'(q)+x$ is an independent set with $\spn(T-\phi'(q)+x)=\spn(T)$, or that $T-\phi^*(x)-\phi'(q)+x+q$ is an independent set with $\spn(T-\phi^*(x)-\phi'(q)+x+q)=\spn(T)$. Noting that both of these sets are rainbow, in either case we find a rainbow independent set $T'\su U\cup T$ of size $|T'|=|T|$ with $\spn(T')=\spn(T)$ and $|T'\setminus T|\le 2$, such that the colour $c=c(\phi'(q))$ does not appear on $T'$. Now, for each $e\in B_c\setminus \spn(T)$, note that $T'+e$ is a rainbow independent set (since $e\not\in \spn(T)=\spn(T')$ and the colour $c$ of $e$ does not appear on $T'$). So $e$ is $(T)$-absorbable with respect to $\mathcal{T}$, since we can take $T_1'=T'+e$ in Definition~\ref{def:absorbable}.
\end{proof}

Finally, we are ready to prove Lemma~\ref{keylem:completingbases-new}. As outlined in Section~\ref{sec-outline-packing}, the proof strategy is to find a cascade of sets $T_1,\dots,T_{r_{\mathrm{max}}}\in \mathcal{T}$ such that the number $\numberofabsorbable_{\mathcal{T}}(T_1,\dots,T_r,T_{r+1})$ of $(T_1,\dots,T_r)$-absorbable elements in $T_{r+1}$ grows exponentially with $r$. Here, $\mathcal{T}$ is either identical to the original family $\mathcal{S}$ given in Lemma~\ref{keylem:completingbases-new}, or $\mathcal{T}$ is a family of $\lceil (1-\eps)n\rceil$ rainbow independent sets of the same total size $|E(\mathcal{T})|= |E(\mathcal{S})|$ as $\mathcal{S}$, where $|E(\mathcal{T})\cap R|$ is not much larger than $|E(\mathcal{S})\cap R|$, and which contains sets $T_1,T_2\in \mathcal{T}$ such that $\numberofabsorbable_\mathcal{T}(T_1,T_2)$ is very large (i.e., such that $T_2$ contains a lot of $(T_1)$--absorbable elements). Finding such $T_1,T_2\in \mathcal{T}$ with very large $\numberofabsorbable_\mathcal{T}(T_1,T_2)$ is very helpful for starting our cascade with growing $\numberofabsorbable_{\mathcal{T}}(T_1,\dots,T_r,T_{r+1})$. If no family $\mathcal{T}$ with these conditions containing such $T_1,T_2\in \mathcal{T}$ exists, then we simply take $\mathcal{T}=\mathcal{S}$. Then at every step, we will have a set $T_{r+1}\in \mathcal{T}=\mathcal{S}$ containing many $(T_1,\dots,T_r)$-absorbable elements. As outlined in Section~\ref{sec-outline-packing}, we can then make some small modifications to $T_{r+1}$ to incorporate many different possible new elements, in order to show that there are many $(T_1,\dots,T_r,T_{r+1})$-absorbable elements. However, if most of these possible new elements to be added to $T_{r+1}$ (after small modifications) lie in $T_1\cup \dots\cup T_r$, they are by definition not $(T_1,\dots,T_r,T_{r+1})$-absorbable, so this would be problematic. In the case where we take $\mathcal{T}=\mathcal{S}$, this cannot happen because otherwise one of the sets $T_1,\dots,T_r$ would contain many $(T'_{r+1})$-absorbable elements (where $T'_{r+1}$ denotes the modified version of $T_{r+1}$), and this would allow us to choose $\mathcal{T}$ differently.

\begin{proof}[Proof of Lemma~\ref{keylem:completingbases-new}]
Let $0<\eps<1/10$, and note that then $1+\eps/4\ge e^{\eps/8}$ (since the inequality $1+2x\ge e^x$ holds for all $x\in[0,1]$). Define $\sigma=\eps^3/20$ and $L=10^7/\eps^5$. As in the statement of Lemma~\ref{keylem:completingbases-new}, let $\eps\le \eta<1$, assume that $n$ is large with respect to $\eps$ and $\eta$, and consider a coloured rank-$n$ matroid where the colour classes $B_1,\dots,B_n$ are bases. Given Lemma~\ref{lem:reservoir-extension-2}, it suffices to prove that the desired conclusion in Lemma~\ref{keylem:completingbases-new} holds for every subset $R\su B_1\cup\dots\cup B_n$ satisfying \ref{property:spade} with $\gamma=2\sigma=\eps^3/10$.
Now, defining $\eps'=\eps/2$, note that $(\eps')^2\eta\ge \eps^3/4>2\gamma$, and so \ref{property:triangle} holds by Lemma~\ref{lem:reservoir-big-span}.

Finally, as in the statement of Lemma~\ref{keylem:completingbases-new}, let $\mathcal{S}$ be a family of $\lceil (1-\eps)n\rceil$ rainbow independent sets with $\frac{3}{4}n^2\le |E(\mathcal{S})|< \lceil (1-\eps)n\rceil\cdot n$ and
$|E(\mathcal{S})\cap R|\leq \sigma n^2$. Let $s=\lceil (1-\eps)n\rceil\cdot n-|E(\mathcal{S})|\ge 1$, and assume for contradiction that there is no family $\mathcal{S}'$ of $\lceil (1-\eps)n\rceil$ rainbow independent sets such that $|E(\mathcal{S}')|>|E(\mathcal{S})|$ and $|E(\mathcal{S}')\cap R|\leq |E(\mathcal{S})\cap R|+L \cdot \ln(n^2/s)$.

Let $\mathcal{T}$ be a family of $\lceil (1-\eps)n\rceil$ rainbow independent sets with $|E(\mathcal{T})|= |E(\mathcal{S})|$ and $|E(\mathcal{T})\cap R|\leq |E(\mathcal{S})\cap R|+(L/2) \cdot \ln(n^2/s)$ such that there are distinct $T_1,T_2\in \mathcal{T}$ with $\numberofabsorbable_\mathcal{T}(T_1,T_2)\ge \eps^4 n/10^5$, if such a family $\mathcal{T}$ exists. In this case, also define $r_{\mathrm{min}}=2$.
If no such family exists, then define instead $\mathcal{T}=\mathcal{S}$ and $r_{\mathrm{min}}=1$, and choose $T_1\in \mathcal{T}=\mathcal{S}$ such that $n-|T_1|$ is maximal. Note that $\sum_{T\in \mathcal{S}}(n-|T|)=\lceil (1-\eps)n\rceil\cdot n-|E(\mathcal{S})|=s$, so in this case we have $n-|T_1|\ge s/n>0$.

Note that in either case we have $|E(\mathcal{T})|=|E(\mathcal{S})|$ and $|E(\mathcal{T})\cap R|\leq |E(\mathcal{S})\cap R|+(L/2) \cdot \ln(n^2/s)$. Let $U=(B_1\cup \dots \cup B_n)\setminus E(\mathcal{T})$ and $r_{\mathrm{max}}=\lceil (L/8)\cdot \ln(n^2/s)\rceil+2$. Note that $s\le n^2-|E(\mathcal{S})|\le n^2/4$, and hence $\ln(n^2/s)\ge \ln(4)>1$ and $r_{\mathrm{max}}-2\ge L/8 >10^6/\eps^5$.

Now, for any $r_{\mathrm{min}}\leq r< r_{\mathrm{max}}$, let us recursively define  $T_{r+1}\in \mathcal{T}\setminus \{T_1,\dots,T_r\}$ to be the set maximising $\numberofabsorbable_\mathcal{T}(T_1,\dots,T_r,T_{r+1})$. Our goal is to show that $\numberofabsorbable_\mathcal{T}(T_1,\dots,T_r,T_{r+1})$ must grow very quickly with $r$, contradicting the fact that we have $\numberofabsorbable_\mathcal{T}(T_1,\dots,T_r,T_{r+1})\le |T_{r+1}|\le n$ for all $r_{\mathrm{min}}\leq r< r_{\mathrm{max}}$.

More specifically, we claim that for any $r_{\mathrm{min}}\leq r< r_{\mathrm{max}}$ we have
    \begin{equation}\label{eq:Nincrease}
\numberofabsorbable_{\mathcal{T}}(T_1,\dots,T_r,T_{r+1})\geq \Big(1+\frac{\eps}{4}\Big)\cdot \Big(\numberofabsorbable_{\mathcal{T}}(T_1,\dots,T_{r-1},T_{r})+n-|T_{r}|\Big).
    \end{equation}
    (where in the case $r_{\mathrm{min}}=1$ we define $\numberofabsorbable_{\mathcal{T}}(T_1)=0$). Noting that $\numberofabsorbable_{\mathcal{T}}(T_1,\dots,T_{r_{\mathrm{max}}-1},T_{r_{\mathrm{max}}})\le |T_{r_{\mathrm{max}}}|\le n$, this is sufficient to get a contradiction. Indeed, in the case $r_{\mathrm{min}}=2$, \eqref{eq:Nincrease} implies \[\numberofabsorbable_{\mathcal{T}}(T_1,\dots,T_{r_{\mathrm{max}}})\ge \Big(1+\frac{\eps}{4}\Big)^{r_{\mathrm{max}}-2}\cdot \numberofabsorbable_{\mathcal{T}}(T_1,T_2)\ge (e^{\eps/8})^{10^6/\eps^5}\cdot \frac{\eps^4n}{10^5}>e^{10^5/\eps^4}\cdot \frac{\eps^4n}{10^5}>\frac{10^5}{\eps^4}\cdot \frac{\eps^4n}{10^5}=n,\]
    and in the case $r_{\mathrm{min}}=1$, \eqref{eq:Nincrease} implies
    \[\numberofabsorbable_{\mathcal{T}}(T_1,\dots,T_{r_{\mathrm{max}}})\ge \Big(1+\frac{\eps}{4}\Big)^{r_{\mathrm{max}}-1}\cdot (n-|T_1|)\ge (e^{\eps/8})^{(10^6/\eps^5)\cdot \ln(n^2/s)}\cdot \frac{s}{n}>\frac{n^2}{s}\cdot \frac{s}{n}=n.\]
    Thus, it suffices to prove \eqref{eq:Nincrease}.

We show \eqref{eq:Nincrease} by induction on $r$. So, for the rest of this proof, take some $r\in \{r_{\mathrm{min}}, \dots, r_{\mathrm{max}}-1\}$ and assume \eqref{eq:Nincrease} holds for all smaller values of $r$ in $\{r_{\mathrm{min}}, \dots, r_{\mathrm{max}}-1\}$. Then, if $r\ge 2$, we in particular have
\[\numberofabsorbable_{\mathcal{T}}(T_1,\dots,T_r)\ge \Big(1+\frac{\eps}{4}\Big)\cdot \numberofabsorbable_{\mathcal{T}}(T_1,\dots,T_{r-1})\ge \dots\ge \Big(1+\frac{\eps}{4}\Big)^{r-2} \numberofabsorbable_{\mathcal{T}}(T_1,T_2).\]
So in the case $r_{\mathrm{min}}=2$, we conclude $\numberofabsorbable_{\mathcal{T}}(T_1,\dots,T_r)\ge \numberofabsorbable_{\mathcal{T}}(T_1,T_2)\ge \eps^4 n/10^5$, and in the case $r_{\mathrm{min}}=1$ we conclude $\numberofabsorbable_{\mathcal{T}}(T_1,\dots,T_r)\ge (1+\eps/4)^{r-2}\numberofabsorbable_{\mathcal{T}}(T_1,T_2)\ge (1+\eps/4)^{r-1} (n-|T_1|)\ge (1+\eps/4)^{r-1}$ if $r\ge 2$.

In order to show \eqref{eq:Nincrease}, we will show that there are many elements $e'\in E(\mathcal{T})\setminus (T_1\cup \dots\cup T_r)$ which are $(T_1,\dots,T_r)$-absorbable with respect to $\mathcal{T}$. In the special case $r=1$, this is shown by the following relatively easy claim (in contrast, in the case $r\ge 2$, this is only shown in Claim~\ref{clm:absorbing-total-counting} and requires significantly more arguments).

\begin{claim}\label{clm:absorbing-counting-r-1}
    If $r=1$, then there are at least $(1-\eps/2)n\cdot (n-|T_{1}|)$ elements $e'\in E(\mathcal{T})\setminus T_1$ which are $(T_1)$-absorbable with respect to $\mathcal{T}$.
\end{claim}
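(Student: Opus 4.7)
The plan is to reduce the claim to a single application of Lemma~\ref{lem:many-1-absorbable} with $T=T_1$ and $\eps'=\eps/2$. First I would check that all the hypotheses hold in our setting. Since $r=1$ forces $r_{\mathrm{min}}=1$, we have $\mathcal{T}=\mathcal{S}$, hence $|\mathcal{T}|=\lceil(1-\eps)n\rceil\le (1-\eps/2)n$ for $n$ large, $|E(\mathcal{T})\cap R|=|E(\mathcal{S})\cap R|\le \sigma n^2\le \gamma n^2$, and $|T_1|<n$ because $n-|T_1|\ge s/n>0$. The only non-routine hypothesis is the absence of a rainbow independent set $S\su U\cup T_1$ with $|S|>|T_1|$ and $|S\setminus T_1|\le 2$. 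To establish this, I would use the standard ``replace and improve'' argument: if such an $S$ existed, replacing $T_1$ by $S$ in $\mathcal{S}$ would yield a family $\mathcal{S}'$ of $\lceil(1-\eps)n\rceil$ rainbow independent sets with $|E(\mathcal{S}')|>|E(\mathcal{S})|$ and $|E(\mathcal{S}')\cap R|\le |E(\mathcal{S})\cap R|+|S\setminus T_1|\le |E(\mathcal{S})\cap R|+L\ln(n^2/s)$ (since $L\ln(n^2/s)\ge L\ln 4\gg 2$), contradicting our standing assumption. Lemma~\ref{lem:many-1-absorbable} then supplies a colour set $C\su[n]$ with $|C|\ge(1-\eps/2)n$ such that every element of $B_c\setminus\spn(T_1)$, for each $c\in C$, is $(T_1)$-absorbable with respect to $\mathcal{T}$.

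Next, I would upgrade these $(T_1)$-absorbable elements to lie in $E(\mathcal{T})\setminus T_1$. Definition~\ref{def:absorbable} already forces $e'\notin T_1$, so the only thing to rule out is $e'\in U$. If such an $e'\in U$ were $(T_1)$-absorbable, Definition~\ref{def:absorbable} would provide a rainbow independent set $T_1'\su U\cup T_1\cup\{e'\}=U\cup T_1$ with $|T_1'|\ge |T_1|+1$ and $|T_1'\setminus T_1|\le 3$. Replacing $T_1$ by $T_1'$ in $\mathcal{S}$ would again give a strictly better family $\mathcal{S}'$, now incurring at most $3$ extra elements of $R$, which still comfortably fits under the $L\ln(n^2/s)$ budget and so contradicts our assumption.

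The last step is a routine count. For each $c\in C$, since $B_c$ is a basis of size $n$ and $|B_c\cap\spn(T_1)|\le\rk(T_1)=|T_1|$, we have $|B_c\setminus\spn(T_1)|\ge n-|T_1|$. Elements of distinct colours are distinct, so summing over $c\in C$ yields at least $|C|\cdot(n-|T_1|)\ge (1-\eps/2)n\cdot(n-|T_1|)$ elements that are both $(T_1)$-absorbable and (by the previous paragraph) in $E(\mathcal{T})\setminus T_1$, which is precisely the desired bound. There is no serious obstacle; the only delicate point, repeated in both improvement steps above, is verifying that the ``cost'' of a hypothetical switch (at most $2$ and at most $3$ new elements of $R$, respectively) fits under the budget $L\ln(n^2/s)$, and this is immediate from $L\ln(n^2/s)\ge L\ln 4$ being very large compared to absolute constants.
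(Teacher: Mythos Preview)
Your proposal is correct and follows essentially the same approach as the paper: verify the hypotheses of Lemma~\ref{lem:many-1-absorbable}, apply it to obtain the colour set $C$, rule out $(T_1)$-absorbable elements in $U$ via the ``replace and improve'' contradiction, and count. The only cosmetic difference is that the paper checks once that there is no rainbow independent $S\subseteq U\cup T_1$ with $|S|>|T_1|$ and $|S\setminus T_1|\le 3$, which simultaneously gives the $\le 2$ hypothesis for Lemma~\ref{lem:many-1-absorbable} and immediately forces every $(T_1)$-absorbable element to lie in $E(\mathcal{T})\setminus T_1$; you split this into two separate checks ($\le 2$ then $\le 3$), which is equally valid.
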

\claimproofstart
    Recall that $|E(\mathcal{T})\cap R|\leq |E(\mathcal{S})\cap R|+(L/2) \cdot \ln(n^2/s)$ and $|E(\mathcal{S})\cap R|\le \sigma n^2$, so $|E(\mathcal{T})\cap R|\le 2\sigma n^2=\gamma n^2$. Furthermore, note that $|T_1|<n$ (since $r=1$ implies $r_{\mathrm{min}}=1$) and that there cannot be a rainbow independent set $S\su U\cup T_1$ with $|S|>|T_1|$ and $|S\setminus T_1|\le 3$ (since otherwise taking $\mathcal{S}'$ to be the family obtained from $\mathcal{T}$ when replacing $T_1\in \mathcal{T}$ by $S$ would contradict our assumption). Therefore, for any element $e'$ which is $(T_1)$-absorbable with respect to $\mathcal{T}$, we must have $e\in E(\mathcal{T})\setminus T_1$. Furthermore, by Lemma~\ref{lem:many-1-absorbable} there is a set of colours $C\su [n]$ of size $|C|\ge (1-\eps')n=(1-\eps/2)n$, such that, for each colour $c\in C$, every element in $B_c\setminus \spn(T_1)$ is $(T_1)$-absorbable with respect to $\mathcal{T}$. Note that, for every colour $c\in C$, we have $|B_c\cap \spn(T_1)|\le \rk(\spn(T_1))=|T_1|$ and hence $|B_c\setminus \spn(T_1)|=n-|B_c\cap \spn(T_1)|\ge n-|T_1|$. Thus in total there are at least $|C|\cdot (n-|T_{1}|)\ge (1-\eps/2)n\cdot (n-|T_{1}|)$ elements $e'\in E(\mathcal{T})\setminus T_1$ which are $(T_1)$-absorbable with respect to $\mathcal{T}$.
\claimproofend

Our goal is to show that also in the case $r\ge 2$ there are many $(T_1,\dots,T_r)$-absorbable elements.

Let $E^*\su T_r$ be the set of elements $e\in T_r$ which are $(T_1,\dots,T_{r-1})$-absorbable with respect to $\mathcal{T}$, and note that $|E^*|=\numberofabsorbable_{\mathcal{T}}(T_1,\dots,T_{r})$. For each $e\in E^*$, we can find a collection of disjoint rainbow independent sets $T_1^{(e)}, \dots , T_{r-1}^{(e)}\su U\cup T_1\cup\dots \cup T_{r-1}\cup \{e\}$ such that $|T^{(e)}_i\setminus T_i|\le 3$ for $i=1,\dots r-1$, and $|T^{(e)}_1|+\dots+|T^{(e)}_{r-1}|\ge |T_1|+\dots+|T_{r-1}|+1$. Define $\mathcal{T}^{(e)}$ to be the family of $\lceil (1-\eps)n\rceil$ disjoint rainbow independent sets obtained from $\mathcal{T}$ when replacing $T_1,\dots,T_r$ by $T_1^{(e)}, \dots , T_{r-1}^{(e)}, T_r-e$. Note that
\[|E(\mathcal{T}^{(e)})|-|E(\mathcal{T})|= |T_1^{(e)}|+\dots ,+|T_{r-1}^{(e)}|+(|T_r|-1)-(|T_1|+\dots+|T_r|)\ge 0\]
and
\[|E(\mathcal{T}^{(e)})\cap R|\le |E(\mathcal{T})\cap R|+\sum_{i=1}^{r-1}|T^{(e)}_i\setminus T_i|\le |E(\mathcal{T})\cap R|+3r\le |E(\mathcal{T})\cap R|+(L/2)\cdot \ln(n^2/s)-3\]
(as $3r\le 3r_{\mathrm{max}}\le (3/8)L\cdot \ln(n^2/s)+9\le (L/2)\cdot \ln(n^2/s)-3$). Thus, we in particular have $|E(\mathcal{T}^{(e)})|\ge |E(\mathcal{T})|=|E(\mathcal{S})|$ and $|E(\mathcal{T}^{(e)})\cap R|\le |E(\mathcal{S})\cap R|+L\cdot \ln(n^2/s)-3$ for all $e\in E^*$.

For each $e\in E^*$, let $U^{(e)}=(B_1\cup\dots\cup B_n)\setminus E(\mathcal{T}^{(e)})$ be the set of elements uncovered by $\mathcal{T}^{(e)}$, and note that $U^{(e)}\su U\cup T_1\cup\dots\cup T_r$. Recalling that $|E(\mathcal{S})\cap R|\le \sigma n^2$, we can also observe that $|E(\mathcal{T}^{(e)})\cap R|\le 2\sigma n^2=\gamma n^2$. Furthermore $|T_r-e|<n$, and there cannot be a rainbow independent set $S\su U^{(e)}\cup (T_r-e)$ with $|S|>|T_r-e|$ and $|S\setminus (T_r-e)|\le 2$ (since otherwise taking $\mathcal{S}'$ to be the family obtained from $\mathcal{T}^{(e)}$ when replacing $T_r-e\in \mathcal{T}^{(e)}$ by $S$ would yield a contradiction to our assumption). Thus, by Lemma~\ref{lem:many-1-absorbable}, there is a set of colours $C^{(e)}\su [n]$ of size $|C^{(e)}|\ge (1-\eps')n$, such that, for each colour $c\in C^{(e)}$, every element in $B_c\setminus \spn(T_r-e)$ is $(T_r-e)$-absorbable with respect to $\mathcal{T}^{(e)}$.

The following claim shows, roughly speaking, that in order to get many $(T_1,\dots,T_r)$-absorbable elements, it suffices to find many elements which are $(T_r-e)$-absorbable with respect to $\mathcal{T}^{(e)}$ for some $e\in E^*$.

\begin{claim}\label{clm:absorbing-implication}
    Let $e\in E^*$, and let $e'$ be $(T_r-e)$-absorbable with respect to $\mathcal{T}^{(e)}$. Then, unless $e'\in T_1^{(e)}\cup \dots \cup T_{r-1}^{(e)}$, the element $e'$ is $(T_1,\dots,T_r)$-absorbable with respect to $\mathcal{T}$ and we have $e'\in E(\mathcal{T})\setminus (T_1\cup \dots\cup T_r)$.
\end{claim}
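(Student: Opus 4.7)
The plan is to witness $(T_1,\dots,T_r)$-absorbability of $e'$ by gluing together the witnesses we already have. Let $T'\subseteq U^{(e)}\cup(T_r-e)\cup\{e'\}$ be the rainbow independent set with $|T'\setminus(T_r-e)|\le 3$ and $|T'|\ge|T_r|$ coming from the definition of $e'$ being $(T_r-e)$-absorbable with respect to $\mathcal{T}^{(e)}$. Setting $T_i':=T_i^{(e)}$ for $i=1,\dots,r-1$ and $T_r':=T'$ gives a candidate witness for $(T_1,\dots,T_r)$-absorbability of $e'$ with respect to $\mathcal{T}$.

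Verifying the technical conditions of Definition~\ref{def:absorbable} for this candidate is largely bookkeeping. Pairwise disjointness of $T_1^{(e)},\dots,T_{r-1}^{(e)},T_r-e$ is inherited from $\mathcal{T}^{(e)}$, and $T'$ is disjoint from each $T_i^{(e)}$ ($i<r$) because $U^{(e)}\cup(T_r-e)$ already is and the "unless" hypothesis excludes $e'\in T_1^{(e)}\cup\dots\cup T_{r-1}^{(e)}$. The containment $T_i'\subseteq U\cup T_1\cup\dots\cup T_r\cup\{e'\}$ follows from $T_i^{(e)}\subseteq U\cup T_1\cup\dots\cup T_{r-1}\cup\{e\}\subseteq U\cup T_1\cup\dots\cup T_r$ (using $e\in T_r$) and $U^{(e)}\subseteq U\cup T_1\cup\dots\cup T_r$ (since sets $T\in\mathcal{T}\setminus\{T_1,\dots,T_r\}$ are unchanged in $\mathcal{T}^{(e)}$). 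The size bound $|T_r'\setminus T_r|\le|T'\setminus(T_r-e)|\le 3$ and the total $\sum_i|T_i'|\ge(\sum_{i<r}|T_i|+1)+|T_r|=\sum_i|T_i|+1$ complete the checklist.

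The main obstacle is to verify $e'\in E(\mathcal{T})\setminus(T_1\cup\dots\cup T_r)$, which is simultaneously one of the conclusions and the prerequisite for Definition~\ref{def:absorbable} to apply to $e'$ and $\mathcal{T}$. Invoking the observation immediately after Definition~\ref{def:absorbable} on the $(T_1,\dots,T_{r-1})$-absorbability witnesses of $e$, we may assume $e\in T_1^{(e)}\cup\dots\cup T_{r-1}^{(e)}$, since otherwise replacing $T_1,\dots,T_{r-1}$ in $\mathcal{T}$ by $T_1^{(e)},\dots,T_{r-1}^{(e)}$ would already yield a family of strictly larger total size with $|E(\cdot)\cap R|$ growing by at most $3(r-1)$, contradicting the maximality of $\mathcal{S}$ (the bound $3r_{\mathrm{max}}\le (L/2)\ln(n^2/s)$, coming from the choices of $L$ and $r_{\mathrm{max}}$, makes this quantitatively safe). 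Combined with the "unless" hypothesis this forces $e'\neq e$, and together with $e'\notin T_r-e$ we get $e'\notin T_r$. For the remaining cases, suppose for contradiction that $e'\in U$ or $e'\in T_i$ for some $i<r$; then $e'$ is not in any set of $\mathcal{T}\setminus\{T_1,\dots,T_r\}$, so $\mathcal{S}':=(\mathcal{T}\setminus\{T_1,\dots,T_r\})\cup\{T_1',\dots,T_r'\}$ is a valid family of $\lceil(1-\eps)n\rceil$ disjoint rainbow independent sets with $|E(\mathcal{S}')|\ge|E(\mathcal{T})|+1>|E(\mathcal{S})|$ and $|E(\mathcal{S}')\cap R|\le|E(\mathcal{T}^{(e)})\cap R|+|T'\setminus(T_r-e)|\le|E(\mathcal{S})\cap R|+L\ln(n^2/s)$ (using the standing bound on $|E(\mathcal{T}^{(e)})\cap R|$ from just before the claim), again contradicting the maximality of $\mathcal{S}$. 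The subtle point is that the case $e'\in T_i$ ($i<r$) is formally consistent with the other hypotheses: here $e'$ has been swept out of $T_i$ in forming $T_i^{(e)}$, so nothing in the candidate witness breaks, and it is only the global maximality of $\mathcal{S}$ that rules this case out.
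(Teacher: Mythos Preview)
Your proof is correct and follows essentially the same approach as the paper: both build the witness $(T_1^{(e)},\dots,T_{r-1}^{(e)},T_r')$ for $(T_1,\dots,T_r)$-absorbability and use the standing contradiction assumption (no improving family $\mathcal{S}'$ exists) to pin down the location of $e'$. The paper's organization is slightly more economical---it shows $e'\notin U^{(e)}$ in one stroke by replacing $T_r-e$ with $T_r'$ inside $\mathcal{T}^{(e)}$, which together with $e'\notin T_r-e$ and the ``unless'' hypothesis immediately forces $e'\in E(\mathcal{T}\setminus\{T_1,\dots,T_r\})$---whereas you split into the separate cases $e'=e$, $e'\in U$, and $e'\in T_i$ for $i<r$, each handled by its own improvement argument; but the substance is the same.
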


\claimproofstart
    By assumption, there exists a rainbow independent set $T_r'\su U^{(e)}\cup (T_r-e)\cup \{e'\}$ with $|T_r'\setminus (T_r-e)|\le 3$ and $|T_r'|\ge |T_r-e|+1=|T_r|$.
    We claim that $e'\not\in U^{(e)}$. Indeed, if $e'\in U^{(e)}$, then the family $\mathcal{S}'$ obtained from $\mathcal{T}^{(e)}$ when replacing $T_r-e\in \mathcal{T}^{(e)}$ by $T_r'$ would be a family of $\lceil (1-\eps)n\rceil$ disjoint rainbow independent sets with $|E(\mathcal{S}')|>|E(\mathcal{T}^{(e)})|\ge  |E(\mathcal{S})|$ and
    \[|E(\mathcal{S}')\cap R|\le |E(\mathcal{T}^{(e)})\cap R|+3\le |E(\mathcal{S})\cap R|+L\cdot \ln(n^2/s)\]
     (noting that $|E(\mathcal{S}')\cap R|-|E(\mathcal{T}^{(e)})\cap R|=|T'_r\cap R|-|(T_r-e)\cap R|\le |T'_r\setminus (T_r-e)|\le 3$). This would be a contradiction to our assumption in the beginning that no such family $\mathcal{S}'$ exists.

     So indeed $e'\not\in U^{(e)}$, which means that $e'\in E(\mathcal{T}^{(e)})=T_1^{(e)}\cup \dots \cup T_{r-1}^{(e)}\cup (T_r-e)\cup E(\mathcal{T}\setminus \{T_1, \dots,T_r\})$. Since $e'$ is $(T_r-e)$-absorbable, we have $e'\not\in T_r-e$ (see Definition~\ref{def:absorbable}). Thus, unless $e'\in T_1^{(e)}\cup \dots \cup T_{r-1}^{(e)}$, we therefore have $e'\in E(\mathcal{T}\setminus \{T_1, \dots,T_r\})=E(\mathcal{T})\setminus (T_1\cup \dots\cup T_r)$.

     It remains to show that $e'$ is $(T_1,\dots,T_r)$-absorbable with respect to $\mathcal{T}$ if $e'\not\in T_1^{(e)}\cup \dots \cup T_{r-1}^{(e)}$. To this end, note that $T_1^{(e)}, \dots ,T_{r-1}^{(e)}, T_r'\su U\cup U^{(e)}\cup T_1\cup\dots\cup T_r\cup \{e'\}= U\cup T_1\cup\dots\cup T_r\cup \{e'\}$ are disjoint rainbow independent sets with $|T_1^{(e)}|+ \dots +|T_{r-1}^{(e)}| +|T_r'|\ge |T_1|+ \dots +|T_{r-1}|+1+|T_{r}|$ and $|T_r'\setminus T_r|\le |T_r'\setminus (T_r-e)|\le 3$ (as well as $|T^{(e)}_i\setminus T_i|\le 3$ for $i=1,\dots r-1$). Thus, by Definition~\ref{def:absorbable}, the element $e'$ is $(T_1,\dots,T_r)$-absorbable with respect to $\mathcal{T}$ if $e'\not\in T_1^{(e)}\cup \dots \cup T_{r-1}^{(e)}$.
\claimproofend

Note that we obtain the conclusion in Claim~\ref{clm:absorbing-implication} only for $(T_r-e)$-absorbable elements $e'$ with $e'\not\in T_1^{(e)}\cup \dots \cup T_{r-1}^{(e)}$. Our next claim gives an upper bound for the number of $(T_r-e)$-absorbable elements $e'$ with $e'\in T_1^{(e)}\cup \dots \cup T_{r-1}^{(e)}$ if $r_{\mathrm{min}}=1$.

\begin{claim}\label{clm:absorbing-backwards-counting}
    Let $e\in E^*$, and assume that $r_{\mathrm{min}}=1$. Then, for every $i=1,\dots,r-1$, there are at most $\eps^4n/10^5$ elements $e'\in T_i^{(e)}$ which are $(T_r-e)$-absorbable with respect to $\mathcal{T}^{(e)}$.
\end{claim}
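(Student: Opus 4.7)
The plan is to argue by contradiction: if for some $i\in\{1,\dots,r-1\}$ the set $T_i^{(e)}$ contained strictly more than $\eps^4n/10^5$ elements that are $(T_r-e)$-absorbable with respect to $\mathcal{T}^{(e)}$, then $\mathcal{T}^{(e)}$ itself either directly produces the improvement $\mathcal{S}'$ whose non-existence we have assumed, or else qualifies as a family whose existence would have forced $r_{\mathrm{min}}=2$, contradicting the hypothesis $r_{\mathrm{min}}=1$.

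First I would observe that $\mathcal{T}^{(e)}$ is by construction a family of $\lceil(1-\eps)n\rceil$ disjoint rainbow independent sets, and that, under the contradiction hypothesis, it contains the two distinct members $T_r-e$ and $T_i^{(e)}$ with $\numberofabsorbable_{\mathcal{T}^{(e)}}(T_r-e,T_i^{(e)})>\eps^4n/10^5$. The size inequality $|E(\mathcal{T}^{(e)})|\ge|E(\mathcal{T})|=|E(\mathcal{S})|$ and the bound $|E(\mathcal{T}^{(e)})\cap R|\le|E(\mathcal{T})\cap R|+3(r-1)$ (from $|T_j^{(e)}\setminus T_j|\le 3$) are already recorded in the surrounding paragraph. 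Since $r_{\mathrm{min}}=1$ gives $\mathcal{T}=\mathcal{S}$, the latter reads $|E(\mathcal{T}^{(e)})\cap R|\le|E(\mathcal{S})\cap R|+3(r-1)$. A short numeric check using $r\le r_{\mathrm{max}}-1\le(L/8)\ln(n^2/s)+2$ together with the large value $L=10^7/\eps^5$ and $\ln(n^2/s)\ge\ln 4$ then gives $3(r-1)\le(L/2)\ln(n^2/s)$, which will be comfortably below both of the $R$-thresholds needed below.

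At this stage I would split into two cases according to $|E(\mathcal{T}^{(e)})|$. If $|E(\mathcal{T}^{(e)})|>|E(\mathcal{S})|$, then $\mathcal{T}^{(e)}$ itself plays the role of $\mathcal{S}'$ in Lemma~\ref{keylem:completingbases-new}: it strictly improves the total size and satisfies $|E(\mathcal{T}^{(e)})\cap R|\le|E(\mathcal{S})\cap R|+L\ln(n^2/s)$, contradicting the standing assumption that no such $\mathcal{S}'$ exists. If $|E(\mathcal{T}^{(e)})|=|E(\mathcal{S})|$, then $\mathcal{T}^{(e)}$ meets every condition appearing in the definition of the $r_{\mathrm{min}}=2$ branch at the start of the proof, with the pair $(T_r-e,T_i^{(e)})$ providing the two distinct sets with $\numberofabsorbable\ge\eps^4n/10^5$; this directly contradicts $r_{\mathrm{min}}=1$.

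The only delicate point is the bookkeeping that keeps $3(r-1)\le(L/2)\ln(n^2/s)$, which is precisely why $r_{\mathrm{max}}$ was chosen logarithmic in $n^2/s$ with coefficient $L/8$: the $R$-overhead incurred by replacing $T_1,\dots,T_{r-1}$ with their absorbed versions fits within the $R$-budgets required both by the $\mathcal{S}'$-case and by the $r_{\mathrm{min}}=2$ definition. No new matroid-theoretic input is needed for this claim — once the dichotomy on $|E(\mathcal{T}^{(e)})|$ is in place, everything reduces to bookkeeping against the two existing forbidden-family conditions.
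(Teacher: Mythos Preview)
Your proposal is correct and follows essentially the same approach as the paper: argue by contradiction, note that $\mathcal{T}^{(e)}$ would then contain distinct sets $T_r-e$ and $T_i^{(e)}$ with $\numberofabsorbable_{\mathcal{T}^{(e)}}(T_r-e,T_i^{(e)})>\eps^4n/10^5$, and split on whether $|E(\mathcal{T}^{(e)})|>|E(\mathcal{S})|$ (giving the forbidden $\mathcal{S}'$) or $|E(\mathcal{T}^{(e)})|=|E(\mathcal{S})|$ (contradicting $r_{\mathrm{min}}=1$). The only cosmetic difference is that the paper simply quotes the already-established bound $|E(\mathcal{T}^{(e)})\cap R|\le|E(\mathcal{T})\cap R|+(L/2)\ln(n^2/s)-3$ from the paragraph preceding the claims rather than re-deriving it as you do.
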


\claimproofstart
    Suppose for contradiction that for some $i\in \{1,\dots,r-1\}$ there are more than $\eps^4n/10^5$ elements $e'\in T_i^{(e)}$ which are $(T_r-e)$-absorbable with respect to $\mathcal{T}^{(e)}$. This means that $\numberofabsorbable_{\mathcal{T}^{(e)}}(T_r-e,T_i^{(e)})> \eps^4n/10^5$.

    Now, recall that $\mathcal{T}^{(e)}$ is a family of $\lceil (1-\eps)n\rceil$ disjoint rainbow independent sets with $|E(\mathcal{T}^{(e)})|\ge |E(\mathcal{S})|$ and $|E(\mathcal{T}^{(e)})\cap R|\le |E(\mathcal{T})\cap R|+(L/2)\cdot \ln(n^2/s)=|E(\mathcal{S})\cap R|+(L/2)\cdot \ln(n^2/s)$ (where in the last step we used that $\mathcal{T}=\mathcal{S}$ by our assumption $r_{\mathrm{min}}=1$).

    Thus, if $|E(\mathcal{T}^{(e)})|>|E(\mathcal{S})|$, this contradicts our assumption that there is no family $\mathcal{S}'$ of $\lceil (1-\eps)n\rceil$ disjoint rainbow independent sets with $|E(\mathcal{S}')|>|E(\mathcal{S})|$ and $|E(\mathcal{S}')\cap R|\le |E(\mathcal{S})\cap R|+L\cdot \ln(n^2/s)$. On the other hand, if $|E(\mathcal{T}^{(e)})|=|E(\mathcal{S})|$, we obtain a contradiction to $r_{\mathrm{min}}=1$ and our choice of $\mathcal{T}$ (since we should have taken $\mathcal{T}$ to be $\mathcal{T}^{(e)}$, making $r_{\mathrm{min}}=2$).
\claimproofend

Combining the two previous claims, we can now give the following lower bound for the number of $(T_1,\dots,T_r)$-absorbable elements.

\begin{claim}\label{clm:absorbing-total-counting}
    There are at least $(1-3\eps/4)n\cdot (\numberofabsorbable_{\mathcal{T}}(T_1,\dots,T_{r})+n-|T_{r}|)$ elements $e'\in E(\mathcal{T})\setminus (T_1\cup \dots\cup T_r)$ which are $(T_1,\dots,T_r)$-absorbable with respect to $\mathcal{T}$.
\end{claim}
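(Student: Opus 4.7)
The plan is to combine Lemma~\ref{lem:many-1-absorbable} applied to each $T_r-e$ (for $e\in E^*$) with Claims~\ref{clm:absorbing-implication} and \ref{clm:absorbing-backwards-counting}, converting ``potentially absorbable'' candidates into genuine $(T_1,\dots,T_r)$-absorbable elements in $E(\mathcal{T})\setminus(T_1\cup\dots\cup T_r)$. First I would apply Lemma~\ref{lem:many-1-absorbable} to $T_r-e$ in the family $\mathcal{T}^{(e)}$ for each $e\in E^*$, with $\eps'=\eps/2$, obtaining a colour set $C^{(e)}\su[n]$ of size at least $(1-\eps/2)n$ such that every element of $B_c\setminus\spn(T_r-e)$ with $c\in C^{(e)}$ is $(T_r-e)$-absorbable with respect to $\mathcal{T}^{(e)}$. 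If $|T_r|<n$, I would also apply the lemma directly to $T_r$ (with respect to $\mathcal{T}$), producing an analogous set $C_r$. The hypotheses are already in place: $|E(\mathcal{T}^{(e)})\cap R|\le 2\sigma n^2=\gamma n^2$ is shown before Claim~\ref{clm:absorbing-implication}, and the ``no short augmenting set'' condition follows from the assumed non-existence of an improving family $\mathcal{S}'$.

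Next I would count the total number of candidates $A^+:=\bigcup_{e\in E^*\cup\{\perp\}}\bigcup_{c\in C^{(e)}}(B_c\setminus\spn(T_r-e))$ (where $\perp$ corresponds to using $T_r$ itself). The crucial matroid identity I would establish is
\[
\bigcap_{i=1}^{k}\spn(T_r-e_i)=\spn(T_r\setminus\{e_1,\dots,e_k\})
\]
for distinct $e_1,\dots,e_k\in T_r$, proven via fundamental circuits: if $x\in\bigcap_i\spn(T_r-e_i)\setminus T_r$, then $e_i\notin C(x,T_r)$ for each $i$, so $C(x,T_r)\su(T_r\setminus\{e_1,\dots,e_k\})\cup\{x\}$, forcing $x\in\spn(T_r\setminus\{e_1,\dots,e_k\})$. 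Using this, for each colour $c$ with $k_c':=|\{e\in E^*\midusedassuchthat c\in C^{(e)}\}|\geq 1$ I would conclude $|A^+\cap B_c|\ge n-|T_r|+k_c'$. Summing over colours, combining $\sum_c k_c'\ge(1-\eps/2)n|E^*|$ with $|\{c\midusedassuchthat k_c'\ge 1\}\cup C_r|\ge(1-\eps/2)n$, gives $|A^+|\geq(1-\eps/2)n(|E^*|+n-|T_r|)$.

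Finally, by Claim~\ref{clm:absorbing-implication}, each $e'\in A_e^+$ for $e\in E^*$ is $(T_1,\dots,T_r)$-absorbable with respect to $\mathcal{T}$ and lies in $E(\mathcal{T})\setminus(T_1\cup\dots\cup T_r)$, unless $e'\in T_1^{(e)}\cup\dots\cup T_{r-1}^{(e)}$; by Claim~\ref{clm:absorbing-backwards-counting} (valid in case $r_{\mathrm{min}}=1$) such exceptions number at most $(r-1)\eps^4 n/10^5$ per $e$. For the $\perp$ contribution, an analogous bound comes from $\numberofabsorbable_\mathcal{T}(T_r,T_i)<\eps^4 n/10^5$, which holds by the defining property of our $r_{\mathrm{min}}=1$ choice of $\mathcal{T}$. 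Subtracting from $|A^+|$ leaves at least $(1-3\eps/4)n(|E^*|+n-|T_r|)$ absorbable elements once the loss is absorbed into the $(\eps/4)n(|E^*|+n-|T_r|)$ slack. The hard part will be the loss accounting: the interplay between $r\le r_{\mathrm{max}}\lesssim(\ln(n^2/s))/\eps^5$ and the $\eps^4/10^5$ per-slot bound must be balanced against the slack, and case $r_{\mathrm{min}}=2$ needs separate treatment since Claim~\ref{clm:absorbing-backwards-counting} is not available there (one exploits instead the base-case lower bound $|E^*|\ge\eps^4 n/10^5$ to provide the needed room).
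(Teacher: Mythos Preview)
Your proposal is correct and follows essentially the same architecture as the paper's proof: apply Lemma~\ref{lem:many-1-absorbable} to each $T_r-e$ to obtain the colour sets $C^{(e)}$, lower-bound the pool of candidates by $(1-\eps/2)n\big(|E^*|+n-|T_r|\big)$, and then subtract the ``bad'' candidates (those landing in $T_1^{(e)}\cup\dots\cup T_{r-1}^{(e)}$), bounding this loss by $(\eps n/4)|E^*|$ via the same three-way case split on $(r_{\mathrm{min}},r)$ that you outline.

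The one substantive difference is in how the lower bound on the candidate pool is obtained. The paper uses, for each colour $c$, the injection $\phi_c\colon T_r\to B_c$ from Lemma~\ref{lem:injecttoBi}: the elements $\phi_c(e)$ (for $e\in E^*$, $c\in C^{(e)}$) are pairwise distinct and each lies in $B_c\setminus\spn(T_r-e)$, while $B_c\setminus\phi_c(T_r)$ supplies a further $n-|T_r|$ elements per colour. You instead prove the identity $\bigcap_{i}\spn(T_r-e_i)=\spn(T_r\setminus\{e_1,\dots,e_k\})$ via fundamental circuits and read off $|A^+\cap B_c|\ge n-|T_r|+k_c'$ directly. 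Both routes yield the same bound; yours avoids invoking Lemma~\ref{lem:injecttoBi} at the cost of the circuit argument. One simplification: your $\perp$-contribution (applying Lemma~\ref{lem:many-1-absorbable} to $T_r$ itself) is exactly the paper's separate treatment of $r=1$ via Claim~\ref{clm:absorbing-counting-r-1}, and for $r\ge 2$ it is redundant since $|E^*|\ge 1$ already forces $|\{c:k_c'\ge 1\}|\ge (1-\eps/2)n$. Dropping $\perp$ for $r\ge 2$ spares you the extra exception-accounting via $\numberofabsorbable_{\mathcal{T}}(T_r,T_i)$ and keeps the case analysis identical to the paper's.
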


\claimproofstart
    In the case $r=1$, the desired statement follows from Claim~\ref{clm:absorbing-counting-r-1} (recalling that $\numberofabsorbable_{\mathcal{T}}(T_1)=0$). So let us assume that $r\ge 2$, which in particular implies $|E^*|=\numberofabsorbable_{\mathcal{T}}(T_1,\dots,T_{r})\ge 1$.

    Now let $E'$ be the set of elements $e'\in B_1\cup\dots\cup B_n$ for which there is some $e\in E^*$ so that $e'$ is $(T_r-e)$-absorbable with respect to $\mathcal{T}^{(e)}$. Furthermore, let $E''$ be the set of elements $e'\in B_1\cup\dots\cup B_n$ for which there is some $e\in E^*$ so that $e'$ is $(T_r-e)$-absorbable with respect to $\mathcal{T}^{(e)}$ and $e'\in T_1^{(e)}\cup \dots \cup T_{r-1}^{(e)}$. Note that $E''\su E'$.
    By Claim~\ref{clm:absorbing-implication}, every element $e'\in E'\setminus E''$ is $(T_1,\dots,T_r)$-absorbable with respect to $\mathcal{T}$ and satisfies $e'\in E(\mathcal{T})\setminus (T_1\cup \dots\cup T_r)$. Thus it suffices to prove that $|E'\setminus E''|\ge (1-3\eps/4)\cdot (\numberofabsorbable_{\mathcal{T}}(T_1,\dots,T_{r})+n-|T_{r}|)$.

    Let us now show a lower bound for $|E'|$ (afterwards, we use Claim~\ref{clm:absorbing-backwards-counting} to show an upper bound for $|E''|$). For every colour $c\in [n]$, let us apply Lemma~\ref{lem:injecttoBi} to the independent set $T_r$ and the basis $B_c$, to find an injection $\phi_c: T_r\to B_c$ such that, for each $e\in T_r$, the set $T_r-e+\phi_c(e)$ is independent, and for each $b\in B_c\setminus \phi_c(T_r)$, the set $T_r+b$ is independent (and hence also the set $T_r-e+b$ for each $e\in T_r$). So, for any $c\in [n]$ we have $\phi_c(e)\in B_c\setminus \spn(T_r-e)$ and $B_c\setminus \phi_c(T_r)\su B_c\setminus\spn(T_r-e)$ for all $e\in T_r$ (and in particular for all $e\in E^*$).

    Recall that for every $e\in E^*$ there is a set of colours $C^{(e)}\su [n]$ of size $|C^{(e)}|\ge (1-\eps')n$, such that for each colour $c\in C^{(e)}$ every element in $B_c\setminus \spn(T_r-e)$ is $(T_r-e)$-absorbable with respect to $\mathcal{T}^{(e)}$. This means that $B_c\setminus \spn(T_r-e)\su E'$ for every $e\in E^*$ and $c\in C^{(e)}$, and hence $\phi_c(e)\in E'$ and $B_c\setminus \phi_c(T_r)\su E'$. Therefore, since $|C^{(e)}|\ge (1-\eps')n$ for all $e\in E^*$ and each map $\phi_c:T_r\to B_c$ is an injection, we can find $|E^*|\cdot (1-\eps')n$ distinct elements of the form $\phi_c(e)$ in $E'$. In addition, when fixing an arbitrary element $e^*\in E^*$ (recalling that $|E^*|\ge 1$), we have $B_c\setminus \phi_c(T_r)\su E'$ for the at least $(1-\eps')n$ colours $c\in C^{(e^*)}$ (and $|B_c\setminus \phi_c(T_r)|=n-|T_r|$). This gives at least $(1-\eps')n\cdot (n-|T_r|)$ elements in $E'$ which do not lie in $\phi_1(T_r)\cup \dots\cup \phi_n(T_r)$. Thus, we can conclude that
    \[|E'|\ge |E^*|\cdot (1-\eps')n+(1-\eps')n\cdot (n-|T_r|)= (1-\eps')n\cdot (|E^*|+n-|T_{r}|).\]

    Next, we claim that $|E''|\le (\eps n/4)\cdot |E^*|$. If $|E^*|\ge 8r/\eps$, this automatically holds as
    \[E''\su \bigcup_{e\in E^*}(T_1^{(e)}\cup \dots \cup T_{r-1}^{(e)})\su (T_1\cup \dots \cup T_{r-1})\cup \bigcup_{e\in E^*}\bigcup_{i=1}^{r-1}(T_1^{(e)}\setminus T_i),\]
    and therefore
    \[|E''|\le |T_1\cup \dots \cup T_{r-1}|+\sum_{e\in E^*}\sum_{i=1}^{r-1}|T_1^{(e)}\setminus T_i|\le (r-1)\cdot n+|E^*|\cdot (r-1)\cdot 3\le  \frac{\eps n}{8} \cdot |E^*|+\frac{\eps n}{8} \cdot |E^*|=\frac{\eps n}{4} \cdot |E^*|\]
    by our assumption $|E^*|\ge 8r/\eps$ and by $r\le r_{\mathrm{max}}\le L\cdot \ln(n^2)\le  \eps n/24$. So in particular this automatically holds if $r_{\mathrm{min}}=2$ (since then $|E^*|=\numberofabsorbable_{\mathcal{T}}(T_1,\dots,T_r)\ge \eps^4n/10^5\ge (8/\eps)L\ln(n^2)\ge 8r_{\mathrm{max}}/\eps\ge 8r/\eps$). It also automatically holds if $r_{\mathrm{min}}=1$ and $r\ge 10^4/\eps^3$ (since then $|E^*|\ge \numberofabsorbable_{\mathcal{T}}(T_1,\dots,T_r)\ge (1+\eps/4)^{r-1}\ge e^{(\eps/8)(r-1)}\ge (e^{\eps r/20})^2\ge (1+\eps r/20)^2\ge \eps^2r^2/400\ge 8r/\eps$). Finally, if $r_{\mathrm{min}}=1$ and $r\le 10^4/\eps^3$, observe that, by Claim~\ref{clm:absorbing-backwards-counting} for each $e\in E^*$ and each $i=1,\dots,r-1$, there are at most $\eps^4 n/10^5$ elements $e'\in T_i^{(e)}$ that are $(T_r-e)$-absorbable with respect to $\mathcal{T}^{(e)}$. Thus, by the definition of $E''$, we have $|E''|\le |E^*|\cdot (r-1)\cdot \eps^4 n/10^5 \le (\eps n/4)\cdot |E^*|$. Thus, we have shown $|E''|\le (\eps n/4)\cdot |E^*|$ in any case.

    All in all, we can conclude (recalling that $\eps'=\eps/2$ and $|E^*|=\numberofabsorbable_{\mathcal{T}}(T_1,\dots,T_{r})$)
    \begin{align*}
    |E'\setminus E''|=|E'|-|E''|&\ge (1-\eps')n\cdot (|E^*|+n-|T_{r}|)-(\eps n/4)\cdot |E^*|\\
    &\ge (1-3\eps/4)n\cdot (|E^*|+n-|T_{r}|)=(1-3\eps/4)n\cdot \Big(\numberofabsorbable_{\mathcal{T}}(T_1,\dots,T_{r})+n-|T_{r}|\Big).\qedhere
    \end{align*}
\claimproofend

By Claim~\ref{clm:absorbing-total-counting}, at least $(1-3\eps/4)n\cdot (\numberofabsorbable_{\mathcal{T}}(T_1,\dots,T_{r})+n-|T_{r}|)$ elements $e'\in E(\mathcal{T})\setminus (T_1\cup \dots\cup T_r)\allowbreak =\bigcup_{T\in \mathcal{T}\setminus\{T_1,\dots,T_r\}} T$ are $(T_1,\dots,T_r)$-absorbable with respect to $\mathcal{T}$. In particular, there exists some $T\in \mathcal{T}\setminus\{T_1,\dots,T_r\}$ containing at least
\[\frac{(1-3\eps/4)n}{|\mathcal{T}|-r}\cdot \Big(\numberofabsorbable_{\mathcal{T}}(T_1,\dots,T_{r})+n-|T_{r}|\Big)\ge (1+\eps/4)\cdot \Big(\numberofabsorbable_{\mathcal{T}}(T_1,\dots,T_{r})+n-|T_{r}|\Big)\]
$(T_1,\dots,T_r)$-absorbable elements with respect to $\mathcal{T}$ (here, we used that $|\mathcal{T}|-r=\lceil (1-\eps)n\rceil-r\le (1-\eps)n$ and $(1+\eps/4)(1-\eps)\le 1-3\eps/4$). This means that $\numberofabsorbable_{\mathcal{T}}(T_1,\dots,T_{r},T)\ge (1+\eps/4)\cdot (\numberofabsorbable_{\mathcal{T}}(T_1,\dots,T_{r})+n-|T_{r}|)$. Recalling that $\numberofabsorbable_{\mathcal{T}}(T_1,\dots,T_{r},T_{r+1})\ge \numberofabsorbable_{\mathcal{T}}(T_1,\dots,T_{r},T)$ by the choice of $T_{r+1}$, this implies \eqref{eq:Nincrease}, as desired.
\end{proof}

%%%%%%%%%%%%%%%%%%%%%%%%%%%%%%%%%%%%%%%%%%%%%%%%%%%%%%%%%%%%%%%%%%%%%%%%%%%%%%%%%%%%%%%%%%%%%%%%%%%%%%%%%%%%%%%%%%%%
%%%%%%%%%%%%%%%%%%%%%%%%%%%%%%%%%%%%%%%%%%%%%%%%%%%%%%%%%%%%%%%%%%%%%%%%%%%%%%%%%%%%%%%%%%%%%%%%%%%%%%%%%%%%%%%%%%%%

\section{Proofs of simple matroid lemmas}\label{sec:simple-matroid-lemmas}

This section contains the proofs of several simple lemmas concerning matroids, rainbow independent sets and deadlocks, which we omitted in the previous sections. We start by proving Lemma~\ref{lem:rainbow-independence-simple-adding}.

\begin{proof}[Proof of Lemma~\ref{lem:rainbow-independence-simple-adding}]
First, we can find an independent set $S'$ with $S\su S'\su S\cup T$ of size $|S'|\ge |T|$ by starting with the independent set $S$ and adding elements from the independent set $T$ using the augmentation property (adding elements from $T$ one at a time until $|S'|\ge |T|$). Note that then we have $|T\setminus S'|=|T\cup S'|-|S'|=|T\cup S|-|S'|\le |T\cup S|-|T|=|S\setminus T|$. Every colour appears at most twice in $S'$, since each of the sets $S$ and $T$ is rainbow. Furthermore, if a colour appears twice in $S'$, then it must appear once in $S\setminus T\su S'$ (and once in $S'\setminus S$). So there are at most $|S\setminus T|$ colours appearing twice in $S'$, and for each of them we can delete an element of $S'\setminus S$ to remove the colouring conflict. This way, we obtain a rainbow independent set $S^*$ with $S\su S^*\su S'\su S\cup T$ and $|S'\setminus S^*|\le |S\setminus T|$. Note that then we have $|T\setminus S^*|\le |T\setminus S'|+|S'\setminus S^*|\le 2\cdot |S\setminus T|$.
\end{proof}

Next, we will prove Lemmas~\ref{lem:injecttoBi} and \ref{lem:injecttST}. We will deduce both of these lemmas from the following statement due to Brualdi~\cite{brualdi1969comments} (for the reader's convenience we also include a proof here).

\begin{lemma}[{\cite{brualdi1969comments}}]\label{lem:two-bases-hall}
    Let $B$ and $B'$ be bases of some matroid. Then, there is a bijection $\psi: B\to B'$ such that, for each $x\in B$, the set $B'-\psi(x)+x$ is independent.
\end{lemma}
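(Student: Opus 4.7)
The plan is to recast the problem as finding a perfect matching in a bipartite graph and invoke Hall's theorem. Define $G$ to be the bipartite graph with vertex classes $B$ and $B'$ (viewed as disjoint vertex sets, so that an element of $B \cap B'$ contributes two distinct vertices), and put an edge between $x \in B$ and $y \in B'$ exactly when $B' - y + x$ is independent. Since $|B' - y + x| = n$, such an edge is equivalent to $B' - y + x$ being a basis. A perfect matching in $G$ is precisely a bijection $\psi: B \to B'$ satisfying the conclusion of the lemma, so by Hall's theorem it suffices to verify that $|N_G(S)| \ge |S|$ for every $S \su B$.

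Fix $S \su B$. Since $S$ is independent (as a subset of the basis $B$) and $B'$ is a basis, by repeated application of the augmentation property there exists $S^* \su B' \setminus S$ with $|S^*| = n - |S|$ such that $S \cup S^*$ is a basis. Let $Y = B' \setminus S^*$, so that $|Y| = n - |S^*| = |S|$. I will show that $Y \su N_G(S)$, which yields the Hall condition.

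Take $y \in Y$. If $y \in S$, then $y \in B \cap B'$, and $B' - y + y = B'$ is independent, so the vertex $y$ on the $B'$-side is adjacent to the vertex $y \in S$ on the $B$-side, giving $y \in N_G(S)$. If instead $y \notin S$, then $y \notin S \cup S^*$. Apply the augmentation property to the independent sets $B' - y$ (of size $n-1$) and $S \cup S^*$ (of size $n$): this produces an element $x \in (S \cup S^*) \setminus (B' - y)$ with $(B' - y) + x$ independent. Since $y \notin S \cup S^*$ and $S^* \su B'$, every element of $(S \cup S^*) \setminus (B' - y)$ in fact lies in $S \setminus B'$, so $x \in S$ and $B' - y + x$ is independent, showing $y \in N_G(S)$.

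The main obstacle is conceptual rather than technical: handling the case $y \in S \cap B'$ requires treating the two copies of such an element as distinct vertices on the two sides of $G$, using the trivial edge from $B' - y + y = B'$. Once this bookkeeping is in place, the argument boils down to two invocations of the augmentation property, both of which are direct.
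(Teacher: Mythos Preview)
Your proof is correct and follows the same overall strategy as the paper: set up the natural bipartite graph on $B\times B'$ and apply Hall's theorem. The only difference lies in how Hall's condition is verified—the paper shows $x\in\spn(N_G(x))$ for each $x\in B$ and then bounds $|N_G(X)|$ via a rank inequality, whereas you extend $S$ to a basis $S\cup S^*$ inside $B'$ and exhibit $B'\setminus S^*$ as an explicit subset of $N_G(S)$ of size $|S|$; both verifications are short and equally valid.
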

\begin{proof}
    For each $x\in B$, let $S_x\su B'$ be the set of all $x'\in B'$ such that $B'-x'+x$ is independent.
    We claim that $S_x+x$ cannot be an independent set. Indeed, if $S_x+x$ was an independent set for some $x\in B$, then we could find an independent set $T$ with $S_x+x\su T\su (S_x+x)\cup B'=B'+x$ of size $|T|=|B'|$ via the augmentation property by successively adding elements from $B'$. But then the independent set $T$ is of the form $B'-x'+x$ for some $x'\not\in S_x$, which is a contradiction to the definition of $S_x$.

    So indeed, for each $x\in B$, the set $S_x+x$ is not independent. Thus we have $x\in \spn(S_x)$ for every $x\in B$. Then, for any subset $X\su B$, we have $X\su \spn(\bigcup_{x\in X}S_x)$ and hence $|\bigcup_{x\in X}S_x|\ge  \rk(\bigcup_{x\in X}S_x)=\rk(\spn(\bigcup_{x\in X}S_x))\ge \rk(X)=|X|$. Thus, by Hall's Theorem, we can find a bijection $\psi: B\to B'$ with $\psi(x)\in S_x$ for all $x\in B$. By the definition of $S_x$, this means that $B'-\psi(x)+x$ is an independent set for each $x\in B$.
\end{proof}

Using Lemma~\ref{lem:two-bases-hall}, it is not hard to deduce Lemmas~\ref{lem:injecttoBi} and \ref{lem:injecttST}.

\begin{proof}[Proof of Lemma~\ref{lem:injecttoBi}]
    Let $B'$ be a basis of the matroid with $S\su B'$. Apply Lemma~\ref{lem:two-bases-hall} to find a bijection $\psi: B\to B'$ such that $B'-\psi(x)+x$ is an independent set for every $x\in B$. Letting $\phi:B'\to B$ be the inverse map of $\psi$, the set $B'-x+\phi(x)$ is independent for every $x\in B'$. In particular, this means that for every $x\in S$, the set $S-x+\phi(x)$ is independent. Furthermore, for each $b\in B\setminus \phi(S)$, the set $B'-x+b$ is independent for some $x\in B'\setminus S$ (namely, the element $x$ with $\phi(x)=b$). Thus, the set $S+b\su B'-x+b$ is also independent. Finally, note that restricting $\phi$ to $S$ gives an injection from $S$ into $B$.
\end{proof}

\begin{proof}[Proof of Lemma~\ref{lem:injecttST}]
    Let $B$ and $B'$ be bases of the matroid with $S\su B$ and $T\su B'$. By Lemma~\ref{lem:two-bases-hall}, there is a bijection $\psi: B\to B'$ such that $B'-\psi(x)+x$ is an independent set for every $x\in B$. If there is some $x\in S\su B$ with $\psi(x)\not\in T$, then this means that $T+x\su B'-\psi(x)+x$ is an independent set, and thus \ref{injopt:1} holds. So let us assume that $\psi(x)\in T$ for all $x\in S$. Then, by restricting $\psi$ to $S$, we get an injection $\phi:S\to T$ such that $B'-\phi(x)+x$, and hence also $T-\phi(x)+x$, is an independent set for every $x\in S$.

    Furthermore, we may assume that $x\in \spn(T)$ for every $x\in S$. Indeed, if there is some $x\in S$ with $x\not\in \spn(T)$, then $T+x$ is an independent set and \ref{injopt:1} holds again.
        So, for any $x\in S$ we may assume that $x\in \spn(T)$, which implies $T\cup \{x\}\su \spn(T)$ and hence $\spn(T\cup \{x\})=\spn(T)$ (since $\spn(T)\su \spn(T\cup \{x\})\su \spn(\spn(T))=\spn(T)$ by the basic matroid rules in Section~\ref{sec:prelim}). Thus,
    \[\spn(T-\phi(x)+x)\su \spn(T\cup \{x\})=\spn(T),\]
    and together with $\rk(T-\phi(x)+x)= |T-\phi(x)+x|=|T|=\rk(T)$, this implies $\spn(T-\phi(x)+x)=\spn(T)$. Thus, \ref{injopt:2} holds.
\end{proof}

It remains to prove Lemmas~\ref{lem:union-overcrowded} and \ref{lem:deadlock-lemma}. To do so, we first need some preparation. First, the following well-known lemma states that the rank function in a matroid is submodular (see for example \cite[Lemma 1.3.1]{oxley2011book}).  Again, we include a short proof for completeness.

\begin{lemma}\label{lem:rank-submodular}
    For any subsets $A$ and $B$ of a matroid, we have $\rk(A\cap B)+\rk(A\cup B)\le \rk(A)+\rk(B)$.
\end{lemma}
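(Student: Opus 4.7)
The plan is to prove submodularity by constructing a maximum independent set in $A \cup B$ that witnesses both rank bounds simultaneously. First, I would pick a maximum independent subset $I \subseteq A \cap B$, so that $|I| = \rk(A \cap B)$. Then, using the augmentation property repeatedly (as discussed in Section~\ref{sec:prelim}), I would extend $I$ to a maximum independent subset $J \subseteq A \cup B$, so that $I \subseteq J \subseteq A \cup B$ and $|J| = \rk(A \cup B)$.

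The next step is to exploit $J \cap A$ and $J \cap B$ as independent sets. Since any subset of an independent set is independent, $J \cap A$ is an independent set contained in $A$, giving $|J \cap A| \le \rk(A)$, and similarly $|J \cap B| \le \rk(B)$. By inclusion--exclusion on the finite sets $J \cap A$ and $J \cap B$:
\[
|J \cap A| + |J \cap B| \;=\; |(J \cap A) \cup (J \cap B)| + |(J \cap A) \cap (J \cap B)| \;=\; |J \cap (A \cup B)| + |J \cap A \cap B|.
\]
Since $J \subseteq A \cup B$, the first term equals $|J| = \rk(A \cup B)$.

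For the second term, I would show $|J \cap A \cap B| = \rk(A \cap B)$. On one hand, $J \cap A \cap B$ is an independent subset of $A \cap B$, so its size is at most $\rk(A \cap B) = |I|$. On the other hand, $I \subseteq J$ and $I \subseteq A \cap B$, so $I \subseteq J \cap A \cap B$, giving the reverse inequality. Combining these,
\[
\rk(A) + \rk(B) \;\ge\; |J \cap A| + |J \cap B| \;=\; \rk(A \cup B) + \rk(A \cap B),
\]
which is the desired inequality. There is no real obstacle here; the only point requiring a moment's care is choosing $J$ to extend $I$ (rather than any arbitrary basis of $A \cup B$), which is exactly what ensures $|J \cap A \cap B| = \rk(A \cap B)$ in the final step.
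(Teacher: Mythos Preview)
Your proof is correct. Both your argument and the paper's start from a maximum independent set $I$ (the paper's $S$) in $A\cap B$, but then diverge: the paper extends $S$ separately to bases $S_A$ of $A$ and $S_B$ of $B$, checks that $|S_A\cup S_B|=\rk(A)+\rk(B)-\rk(A\cap B)$, and bounds $\rk(A\cup B)$ from above by $|S_A\cup S_B|$ via spans. You instead extend $I$ directly to a basis $J$ of $A\cup B$ and bound $\rk(A)+\rk(B)$ from below by $|J\cap A|+|J\cap B|$, using inclusion--exclusion and the fact that $I\subseteq J\cap A\cap B$. The two proofs are in some sense dual (extend-outward-then-union versus extend-to-union-then-intersect) and are both standard; yours is arguably slightly cleaner since it avoids any appeal to spans and requires only one extension step rather than two.
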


\begin{proof}
Let $S\su A\cap B$ be a maximal independent subset of $A\cap B$, then $|S|=\rk(A\cap B)$. Let us now extend $S$ to a maximal independent subset $S_A$ of $A$, then $S\su S_A\su A$ and $|S_A|=\rk(A)$. Similarly, let us extend $S$ to a maximal independent subset $S_B$ of $B$, then $S\su S_B\su B$ and $|S_B|=\rk(B)$. Now, $S_A\cup S_B=S\cup (S_A\setminus S)\cup (S_B\setminus S)$ is a set of size
\[|S|+|S_A\setminus S|+|S_B\setminus S|=\rk(A\cap B)+(\rk(A)-\rk(A\cap B))+(\rk(B)-\rk(A\cap B))=\rk(A)+\rk(B)-\rk(A\cup B).\]
Thus, as $A\cup B\su \spn(S_A)\cup \spn(S_B)\su \spn(S_A\cup S_B)$, we have
\[\rk(A\cup B)\le \rk(\spn(S_A\cup S_B))=\rk(S_A\cup S_B)\le |S_A\cup S_B|=\rk(A)+\rk(B)-\rk(A\cup B),\]
and rearranging yields the desired inequality.
\end{proof}

Next, we prove Lemma~\ref{lem:union-overcrowded}.

\begin{proof}[Proof of Lemma~\ref{lem:union-overcrowded}]
Let $S'\su S_1\cup S_2$, then we have
\begin{align*}
    |(S_1\cup S_2)\setminus S'|&=|S_1\setminus (S'\cap S_1)|+|S_2\setminus ((S'\cup S_1)\cap S_2)|\\
    &\ge k\cdot (\rk(S_1)-\rk(S'\cap S_1))+k\cdot (\rk(S_2)-\rk((S'\cup S_1)\cap S_2))\\
    &\ge k\cdot (\rk(S_1)-\rk(S'\cap S_1))+k\cdot (\rk((S'\cup S_1)\cup S_2)-\rk(S'\cup S_1))\\
    &= k\cdot (\rk(S_1\cup S_2) +\rk(S_1)-\rk(S'\cap S_1)-\rk(S'\cup S_1))\\
    &\ge k\cdot (\rk(S_1\cup S_2) -\rk(S')),
\end{align*}
where in the third step we used Lemma~\ref{lem:rank-submodular} (the submodularity of the rank function) for $A=S_2$ and $B=S'\cup S_1$ and in the last step we used it for $A=S_1$ and $B=S'$. Thus, $S_1\cup S_2$ is $k$-overcrowded.
\end{proof}

Our last remaining task is to prove Lemma~\ref{lem:deadlock-lemma}.

\begin{proof}[Proof of Lemma~\ref{lem:deadlock-lemma}]
Let $D=D_k(U\cup U')$, so that $D\su U\cup U'$ is $k$-overcrowded and we wish to show that $D_k(U)=D$. 
First, we show that $D\setminus U'$ is $k'$-overcrowded. For this, let $S'\su D\setminus U'$. If $\rk{(S')}=\rk{(D\setminus U')}$, then $|(D\setminus U')\setminus S'|\geq k'\cdot (\rk{(D\setminus U')}-\rk{(S')})$ trivially holds. So assume $\rk{(S')}<\rk{(D\setminus U')}$, then $\rk{(S')}\le \rk{(D\setminus U')}-1\le \rk{(D)}-1$. As $D$ is $k$-overcrowded and $S'\su D$, we have (using $|U'|\le k-k'$)
\[
|(D\setminus U')\setminus S'|\geq |D\setminus S'|-|U'|\geq k(\rk{(D)}-\rk{(S')})-(k-k')\geq k'(\rk{(D)}-\rk{(S')})\ge k'(\rk{(D\setminus U')}-\rk{(S')}).
\]
Therefore, $D\setminus U'$ is indeed $k'$-overcrowded.

Since $D\setminus U'\su (U\cup U')\setminus U'=U$, this implies $D\setminus U'\su D_{k'}(U)$ and hence $D\setminus (D_{k'}(U)\cap D)\su U'$. Thus, we obtain (again using that $D$ is $k$-overcrowded)
\[k>k-k'\ge |U'|\ge |D\setminus (D_{k'}(U)\cap D)|\ge k\cdot \Big(\rk(D)-\rk(D_{k'}(U)\cap D)\Big).\]
This implies $\rk(D)-\rk(D_{k'}(U)\cap D)<1$, meaning that $\rk(D)=\rk(D_{k'}(U)\cap D)$ and therefore $\spn(D)=\spn(D_{k'}(U)\cap D)$. In particular, we obtain $D\su \spn(D_{k'}(U)\cap D)\su \spn(D_{k'}(U))$. Recalling that $U'\cap \spn{(D_{k'}(U))}=\emptyset$, this implies $D\cap U'=\emptyset$ and hence $D\su U$. So we can conclude that $D\su D_k(U)$, as $D$ is $k$-overcrowded. Since we also have $D_k(U)\su D_k(U\cup U')=D$, we obtain $D_k(U\cup U')=D=D_k(U)$.
\end{proof}

%\bibliographystyle{abbrv}
%\bibliography{rotabib}
\end{document}